\newcolumntype{C}[1]{>{\centering\arraybackslash}p{#1}}
\def \length {B}
\def \PRWA {RWA-P}
\def \path {\mathcal{P}}
\def \requestInd {r}
\def \requestSet {R}
\newtheorem{theorem}{Theorem}
\newtheorem{proposition}{Proposition}
\newtheorem{corollary}{Corollary}
\newtheorem{lemma}{Lemma}
\newtheorem{claim}{Claim}
\newtheorem{example}{Example}
\newtheorem{definition}{Definition}
\newcommand{\crev}{\color{black}} %{\color{red}} 
\newcommand{\cemph}{\color{black}}  %{\color{teal!80!black}} %{\color{magenta}} %%to emphasize the changes by oylum 
\newcommand{\bigO}{\mathcal{O}}
\newcommand{\IPbase}{IP\textsuperscript{Base}}
\newcommand{\IPstrong}{IP\textsuperscript{Strong}}
\newcommand{\penaltyCoef}{\rho}
\newcommand{\conflictSet}{\mathcal{C}}
\newcommand{\link}{e}
\newcommand{\linkSet}{E}
\newcommand{\linksOfLightpath}{\linkSet}%{\mathcal{L}}
\newcommand{\workingInd}{w}
\newcommand{\workingSet}{W}
\newcommand{\protectionInd}{p}
\newcommand{\protectionSet}{P}
\newcommand{\wavelength}{\lambda}
\newcommand{\wavelengthSet}{\Lambda}
\newcommand{\wavelengthOfLightpath}{\Lambda}
\newcommand{\obj}{f}
\newcommand{\objFnc}{\obj(x,y)}
\newcommand{\objFncLink}{\obj_{\alpha}(x,y)}
\newcommand{\objFncGranted}{\obj_{\beta}(x)}
\newcommand{\betaBase}{\beta^{\text{Base}}}
\newcommand{\betaTight}{\beta^{\text{Tight}}}
\newcommand{\LBprioOne}{\Omega^{=}}
\newcommand{\LBprioExact}{\Omega^{>}}
\newcommand{\Methods}{Methods}
\newcommand{\methods}{methods}
\newcommand{\method}{method}
\newcommand{\mssnode}{n}
\newcommand{\wnode}{u}
\newcommand{\pnode}{v}
\newcommand{\exMSSinst}{G_s}
\newcommand{\exPRWAnetwork}{G}
\newcommand{\exConfSet}{\conflictSet}
\newcommand{\RSheur}{RS-Heur}
\newcommand{\mss}{{\sc MSS}}
\newcommand{\dmss}{{{\sc Dec-}\mss}}
\newcommand{\PRWAreqInst}{\mathcal{I}}
\newcommand{\PRWAreq}{\PRWA-\sc r}
\newcommand{\DPRWAreq}{{\sc Dec-}\PRWAreq}
\newcommand{\cpink}{\color{magenta}} %to emphasize some of the changes that oylum has made from May 31 onward
\newcommand{\BI}{\begin{itemize}}
\newcommand{\EI}{\end{itemize}}
\newcommand{\I}{\item}
\def\overbigdot#1{\overset{\hbox{\tiny$\bullet$}}{#1}}
\newcommand{\xgiven}{\overbigdot{x}}
\newcommand{\ygiven}{\overbigdot{y}}
\newcommand{\fgiven}{\overbigdot{f}}
\newcommand*{\addFileDependency}[1]{% argument=file name and extension
  \typeout{(#1)}
  \@addtofilelist{#1}
  \IfFileExists{#1}{}{\typeout{No file #1.}}
}
\begin{document}

\begin{frontmatter}
% Enter the full title:
\title{Routing and Wavelength Assignment with Protection: A Quadratic Unconstrained Binary Optimization Approach}
%A QUBO and Digital Annealer Approach -- TITLE CANNOT CONTAIN ACRONYMS IN EJOR!!!
	
\author[1]{Oylum \c{S}eker\corref{cor1}}
\ead{oylum.seker@utoronto.ca}
	
\author[1]{Merve Bodur}
\ead{bodur@mie.utoronto.ca}
		
\author[1]{Hamed Pouya}	
\ead{h.pouya@utoronto.ca}
	
\cortext[cor1]{Corresponding author}
	
\address[1]{Department of Mechanical and Industrial Engineering, University of Toronto, Toronto, ON, M5S3G8, Canada}

\begin{abstract}
The routing and wavelength assignment with protection is an important problem in telecommunications. 
Given an optical network and incoming connection requests, a commonly studied variant of the problem aims to grant maximum number of requests by assigning lightpaths at minimum network resource usage level, while ensuring the provided services remain functional in case of a single-link failure through dedicated path protection. 
We consider a practically relevant version where alternative lightpaths for requests are assumed to be given as a precomputed set, and show that it is NP-hard. 
We formulate the problem as an integer programming (IP) model, and also use it as a foundation to develop a novel quadratic unconstrained binary optimization (QUBO) model,  which can be both directly solved by a state-of-the-art solver like GUROBI.
We present necessary and sufficient conditions on objective function parameters to prioritize request granting objective over wavelength-link usage for both models, and a sufficient condition to ensure the exactness of the QUBO model. 
Moreover, we implement a problem-specific branch-and-cut algorithm for the IP model, and employ a new quantum-inspired technology, Digital Annealer (DA), for the QUBO model. 
%we present conditions on model parameters to achieve a desired objective prioritization, and to ensure the exactness of the QUBO model. 
%{\cemph %\sout{We employ state-of-the-art solver GUROBI to solve the IP models by providing them directly to the solver and also applying a branch-and-cut method. For solving the QUBO models, we use a promising new technology, Digital Annealer (DA), as well as GUROBI.}}
%{ compare it with a problem-specific heuristic and three different exact alternatives that employ GUROBI, namely providing the models directly to the solver, and applying a branch-and-cut method. }
We conduct computational experiments on a large suite of instances {\cemph that are hard to optimally solve} in order to assess the efficiency and efficacy of all of these approaches as well as a problem-specific heuristic. 
%for performance {\cemph assessment} \sout{comparison} and also do sensitivity analysis on {\cemph penalty coefficient parameter.} 
%\sout{model parameters. } 
The results show that the emerging technology DA outperforms the considered established techniques coupled with GUROBI, in finding mostly significantly better or as good solutions in only two minutes compared to two hours of run time, whereas the problem-specific heuristic fails to be competitive. 
%{\cemph We find that the solution quality that the emerging technology DA yields in two minutes is mostly better than or as good as what the established techniques coupled with the state-of-the-art solvers yield after two hours of run time.}
% penalty 
\end{abstract}

\begin{keyword}
OR in telecommunications; routing and wavelength assignment; quadratic unconstrained binary optimization; Digital Annealer; integer programming
\end{keyword}
\end{frontmatter}

%\maketitle

\newpage	
\section{Introduction}
\label{intro}
% OS: introductory info about optical networks
An optical network is a medium for information transmission via signals encoded in pulses of light. It connects devices that generate or store data through optical fibers carrying light channels.
With the wavelength division multiplexing (WDM) technology allowing multiple %optical 
signals to be simultaneously transmitted on the same fiber, optical networks have become particularly potent in conveying high volumes of information at very high speeds in a reliable way. As such, they are being increasingly deployed to meet the rapidly growing demand in many high-bandwidth applications such as real-time multimedia streaming, cloud computing and mobile network services \citep{majumdar2018optical,chadha2019optical}. 

% OS: network model and lightpath definition
Wavelength-routed networks form a broad class of WDM networks, and can be considered as a set of nodes joined by fiber links. The communication between a pair of nodes is established through lightpaths, which is referred to as \emph{connecting} these two nodes.
A \emph{lightpath} is an optical communication channel between two nodes in the network, and is comprised of a path (route)
and a wavelength. 
% OS: define RWA
A typical problem arising in wavelength-routed networks is the {\it routing and wavelength assignment} (RWA) problem. Given a set of connection requests between pairs of nodes, the RWA problem decides which requests to \emph{grant}, i.e., provision a lightpath, such that no two allocated lightpaths with the same wavelength traverse a common fiber link, to prevent interference.

There are many variants of the RWA problem, which mainly differ in their %network characteristics,
objective and the nature of the connection demand process, as well as extensions incorporating additional concerns such as failures in the network, service quality and resource usage profile \citep{bandyopadhyay2007dissemination}. 
In general, they are all difficult to solve due to the inherent computational complexity of the RWA problem \citep{erlebach2001complexity}. 

In this paper, we study an RWA problem whose features are motivated by practical concerns (e.g., fast recovery from failures), 
%and short connection setup times), 
and some common goals in telecommunications industry (such as granted request maximization and resource usage minimization). 
In the sequel, we first provide all the relevant background information and motivate our problem setup (Section \ref{subsec:background}); then formally define our problem, introduce our solution approach and summarize our contributions (Section \ref{subsec:ourwork}). 

\subsection{Background Information}
\label{subsec:background}
The information provided in this section is mostly based on the books by \cite{mukherjee2006optical}, \cite{bandyopadhyay2007dissemination} and \cite{chatterjee2016routing}.

\subsubsection*{Network failures and recovery schemes.}
% OS: importance of network failures and how to deal with them
While it is desirable to maximize the number of granted requests in RWA, it is also often necessary to provide some degree of protection for them against potential failures in the network.
In a WDM network, any of the components may fail, and one failure may disrupt multiple connections. 
Link failure is the most frequently encountered type of fault, which may for instance arise from fiber cuts due to human errors during construction operations, or natural calamity such as earthquakes. 
% OS: single-link failures being the most common 
Also, the probability of having multiple link failures simultaneously, or having additional failure(s) before one has been repaired, is considered to be negligible. Thus, the literature is mostly concerned with the case of a \emph{single-link failure}, which we follow as well.

As each fiber link can carry terabits of data per second, even a brief disruption of a connection can result in a large amount of data loss. As such, fault management mechanisms play an important role in WDM network survivability \citep{zhang2004review}.
Link failures can be handled at the optical or a higher layer, but the time to detect and repair a failed link ranges from tens of seconds to a few days. Since even the shortest repair time is still too long relative to the rate of data transfer, some \emph{fault recovery strategies}, which re-route the broken connections using the available network resources, are usually adopted to keep the services functional while the repair process is in progress. 
% OS: fault recovery schemes; protection vs. restoration
There are two main categories of such strategies, {\it protection} and {\it restoration}.
In the protection scheme, \emph{backup (protection) lightpaths} are computed and reserved in advance along with the \emph{primary (working) lightpaths}. 
This ensures fast recovery of all the affected connections by replacing the primary lightpaths with the backups (in milliseconds upon failure), at the expense of increased network resource usage.
In the restoration scheme, on the other hand, backup capacity is not provisioned prior to the occurrence of failure, instead new lightpaths are discovered dynamically upon the interruption of connections. Despite being less demanding in resource usage, restoration schemes fail to guarantee resource availability, and lead to higher recovery times.

% OS: recovery schemes further classified; path vs link based approaches, dedicated
Protection and restoration schemes for link failures can be categorized with respect to being {\it path}- or {\it link}-based, i.e., whether they re-route the whole path or only the failed link(s). 
Path protection schemes can be {\it dedicated} or {\it shared}.
When it is dedicated, each backup lightpath is reserved for only one primary lightpath, and two backup lightpaths with the same wavelength cannot have a common link on their routes. Two commonly used subcategories of this scheme are denoted by 1:1 and 1+1. The former transmits data through only primary lightpaths before failure, while the latter allows the use of both primary and backup lightpaths simultaneously.

\subsubsection*{Static and dynamic RWA.}
% OS: static and dynamic demand definitions and the batched demand lying in btw these
The underlying demand process in RWA problems is considered as {\it static} or {\it dynamic}. 
The static case assumes that connection requests with their associated source and destination nodes are known in advance. In the dynamic case, on the other hand, requests arrive one by one and are provisioned a lightpath in real-time.
%In our study, we strive to develop efficient solution methods for the static case.
%In our study, we focus on the static case, but since the solution time of our proposed methodology is typically few minutes by design  good solutions in a short amount of time 
In our study, we aim to efficiently solve the static RWA problem.

We note that a dynamic RWA problem can be approximated by discretizing the time horizon into intervals and solving a static RWA problem for each using the batch of requests arrived during the corresponding interval, as typically done for dynamic network reconfiguration problems \citep{ zhang2007optimization, wu2012forward, grover2013distributed}. 
Such an approach would usually require solving the static problems quickly, e.g., for some bandwidth on demand services, their service level agreement between the provider and the customer guarantees connections to be established (with a certain level of protection) in a matter of minutes \citep{fawaz2004service,losego2005time}. 
In that regard, the methods solving the static RWA problem efficiently may well benefit dynamic settings. In fact, one method we propose can generate good-quality solutions in up to two minutes.
%our proposed methods may well benefit dynamic settings. %addressing the dynamic provisioning problem.

%{\cred Static RWA problems can be used as an approximation of the dynamic ones. The idea is to discretize the continuous time horizon into intervals/cycles, and solve a static RWA problem for each, using the batch of demand consisting of dynamically arriving requests during that interval as the input. This is indeed a typical setting in studies addressing network reconfiguration problems that establish new connections by allowing the existing ones to be rearranged \citep{ zhang2007optimization, wu2012forward, grover2013distributed}. It can also lead to more efficient use of potentially limited network resources in the long run. In that regard, a method that can yield high-quality solutions quickly to the static RWA problem, which our study strives to provide, may well help in addressing the practical provisioning problem that is dynamic by nature. }

\subsubsection*{Precomputed paths.}%Connection setup times and 
% OS: common objectives and connection setup times of RWA models

% Motivation for precomputation
One way to improve solution times for RWA problems is to employ a two-phase framework; generating a set of paths between all or some potential source and destination pairs in the first phase, and picking one from the precomputed alternatives for each request and doing the wavelength assignment in the second phase, for instance as adopted in \citep{li2000partition, noronha2006routing}. In cases where an RWA problem needs to be solved repeatedly over time, this strategy can be made more efficient by performing the first phase only once at the beginning, i.e., by using the same set of precomputed paths throughout the horizon. Granting requests from their precomputed set of alternatives may also provide more control to decision makers, in the sense that they can disperse the demand over the network in a balanced way if desired, and  can have a better idea on the state of the network for subsequent decision-making stages well in advance.

%{\cred When combined with a protection scheme (especially a dedicated one), using precomputed paths serves well to the joint purpose of quickly reacting to connection requests and recovering from failures. This combination is used in our problem setting, which we explain in more detail next.}

%The emerging high-bandwidth applications such as video-on-demand, data storage, video conferencing typically require short connection setup times and even a certain level of protection, which can be specified by a service level agreement between the service provider and the customer. In these agreements, {\cemph connections may be required to be established in a matter of minutes,}%the time limit to setup a connection can be \emph{one minute}, 
%and the time limit to recover from failures can be as low as 50 milliseconds \citep{fawaz2004service,losego2005time}. {\sout{This can be deemed another motivation to tackle the RWA problem in a fast manner. }}

\subsection{Our Work}
\label{subsec:ourwork}
\subsubsection*{Problem definition.}
% OS: what we do
In the light of discussions in the background section, in this study, we consider the RWA problem with static demand and 1:1 dedicated path protection scheme against single-link failures, for a given network with a set of precomputed alternative primary and backup paths and a number of available wavelengths. We call this the {\it Routing and Wavelength Assignment with Protection} {({\PRWA})} problem, where the aim is to primarily  maximize the number of granted requests,
since this brings the actual gains for the service providers \citep{shen2005shared},
while minimizing the wavelength-link usage as a secondary goal to save network resources for future demands. 

\subsubsection*{Outline of our work.} We show that \PRWA~is NP-hard, propose mathematical models and evaluate their solution performance using promising technologies. 
More specifically, we propose an integer programming (IP) model for {\PRWA} as well as a strengthened version of it, and use it as a foundation to develop a novel quadratic unconstrained binary optimization (QUBO) model. 
For model parameters, we present conditions to achieve the intended objective prioritization of granted requests over resource usage, as well as to ensure that the QUBO model is exact.
We use the exact state-of-the-art solver GUROBI to solve the IP models, both by directly providing them to the solver and also applying a problem-specific branch-and-cut method. In order to derive solutions for the QUBO model, we employ a new technology called Digital Annealer (DA), and also test GUROBI. 
%\sout{In order to derive solutions for the QUBO model, we employ a new technology called Digital Annealer, which we compare with three different alternatives using an exact state-of-the-art solver, GUROBI, namely providing the QUBO and IP models directly to the solver, and applying a branch-and-cut method for the latter. }
We conduct computational experiments on a large suite of instances based on commonly used networks, compare the efficiency and efficacy of all our methods as well as a problem-specific heuristic from the literature, and analyze the impact of the penalty coefficient parameter of the QUBO model on DA's performance.

\subsubsection*{Choice of solution technologies.}
%IP has been a widely adopted modeling framework for discrete optimization problems, including RWA problems. This can be attributed to the development of many efficient solution methodologies and enhancements such as the branch-and-cut algorithm, presolve techniques and heuristics; and in turn all the advancements in the IP solvers over the last few decades. 
IP has been a widely adopted modeling framework for discrete optimization problems, including RWA problems. This can be attributed to the development of many efficient solution methodologies and enhancements such as the branch-and-cut algorithm, presolve techniques and heuristics.
Despite their advanced status, state-of-the-art IP solvers may take prohibitively long times to yield optimal solutions, especially for large-scale problems, because run times are typically exponential in the size of the input model. However, for many practical problems, it is sufficient to obtain a good-quality solution, as also noted in \citep{methmod}, but preferably in a reasonable/short amount of time. %, which could be very short in certain cases such as in a matter of minutes. 
%and in certain cases short amount of solution times are also desired. 
Although IP solvers can be tuned to generate feasible solutions faster, they may fail to address both of these practical considerations simultaneously.
%In that regard, the IP approach might still help the former but fail to be satisfactory for the latter. 

% OS: Solvers
%A widely adopted strategy for tackling discrete optimization problems is to formulate them as IP models to be solved by a  state-of-the-art solver. 
%The solution technologies specialized for IP are highly advanced and have long been recognized among the most efficient approaches, notably for RWA problems, too.
%IP technology quite advanced, long been recognized as a viable approach, also notably for RWA problems. 
%State-of-the-art IP solvers are known to perform well particularly for linear models. 
%{\cred However, they may take prohibitively long times to yield optimal or even good-quality solutions especially for large-scale problems, because the run times typically rise exponentially in the size of the input model.}
%For many practical problems, it is sufficient to obtain a good-quality solution, as also noted in \citep{methmod}, and {\cemph  in certain cases short amount of solution times are also desired. }
%previous version: 
%For many practical problems, it is sufficient to obtain a good-quality solution, as also noted in \citep{methmod}, and \sout{usually short amount of solution times are desired; which is exactly the case for our \PRWA~problem. }

%{\cemph In cases where IP tools fail to yield satisfactory performance, }
A plausible alternative to tackle such problems is to formulate them as QUBO models and generate solutions via novel computational architectures and new technologies, such as adiabatic quantum computing (e.g., \citep{papalitsas2019qubo}), neuromorphic computing (e.g., \citep{corder2018solving}), and optical parametric oscillators (e.g., \cite{inagaki2016coherent}), which have recently attracted significant attention due to their capability in tackling combinatorial optimization problems.
A promising example to these new technologies is DA \citep{da}, which is a computer architecture %\citep{aramon2019physics,sao2019application} 
that rivals quantum computers in utility \citep{boyd2018silicon}.
DA is designed to solve QUBO models, and uses an algorithm based on simulated annealing. % \citep{da}. 
In many applications, such as minimum vertex cover problem \citep{javad2019digitally}, maximum clique problem \citep{naghsh2019digitally} and outlier rejection \citep{rahman2019ising}, it has been shown to significantly improve upon the state of the art and yield high-quality solutions in radically short amount of time. 
%RADICALLY SHORT....

\subsubsection*{Contributions.} The contributions of this paper are as follows: 
\BI%[label=--]

\I We prove that {\PRWA} is NP-hard.
\I We develop IP and QUBO models, and propose conditions on parameter values to ensure their validity in the sense that the intended objective prioritization is achieved. Moreover, we propose sufficient conditions establishing the exactness of the QUBO model. %, which has been formally considered by only few studies in the DA literature, e.g., \citep{cohen2020ising, cohen2020constrainedclust}.  
%so that infeasible solutions of the \PRWA~problem are kept inferior to any feasible one in terms of the model objective, and the set of optimal solutions of the problem and the model match and thereby the exactness of the model is established. The latter point has been formally considered by only few studies in the DA literature, e.g., \citep{cohen2020ising, cohen2020constrainedclust}.  
\I Our QUBO approach is the first of its kind in the vast RWA literature and observed to be highly efficient and efficacious {\cemph in solving a large suite of nontrivial test instances}, well addressing the practical needs of \PRWA, which signifies that it can potentially be useful for other important RWA problems due to structural similarities.
%\I We leverage IP tools to solve {\PRWA} ...........................................
%{\cemph \I For cases where IP methods fail to yield an optimal or at least good-quality solution in a reasonable amount of time, we propose a highly efficient and efficacious heuristic solution approach, namely solving the QUBO model with DA. This is first of its kind in the vast RWA literature, signifying that it can potentially be useful for other important RWA problems due to structural similarities. Considering that DA takes at most two minutes in solving a given {\PRWA} instance, it can also aid in dynamic demand cases where it is necessary to obtain solutions quickly. }
\I We show that the emerging DA technology outperforms highly established IP methods accompanied by advanced solvers {\cemph in handling instances that are hard to optimally solve}, indicating that it has the potential to become a viable tool in addressing combinatorial optimization problems. Considering that DA is rarely employed in the operations research literature and only few studies compare it with the state of the art, e.g., \citep{cseker2020digital, naghsh2019digitally, ohzeki2019control,matsubara2020digital}, our study serves as a step to bridge the gap between the use of the established and new promising solution technologies.
%\I We show through sensitivity analysis that models with smaller parameter values lead to significantly better solutions for the considered methods, which has been mentioned in only few studies previously, e.g., \citep{cohen2020ising, cohen2020constrainedclust}. To the best of our knowledge, this is the first reported systematic analysis of the penalty coefficient on DA solution quality.
\I We show through sensitivity analysis that smaller penalty coefficient values lead to better solutions for DA, which has been mentioned in only few studies previously, e.g., \citep{cohen2020ising, cohen2020constrainedclust}. To the best of our knowledge, this is the first reported systematic analysis of the penalty coefficient on DA solution quality.
\EI

\subsubsection*{Paper outline.}
The remainder of this paper is organized as follows.
We review the related literature in Section \ref{sec:litrev}.
In Section \ref{sec:statement}, we present an IP formulation for {\PRWA}, provide a prioritization condition and discuss problem complexity. 
Then, we introduce a QUBO model for {\PRWA} in Section \ref{sec:qubo}, derive a condition that render it exact, and overview the operating principles of DA. 
In Section \ref{sec:comp_study}, we present our computational study, and finally in Section \ref{sec:conclusion}, we conclude our paper with a brief summary.

\section{Related Literature}
\label{sec:litrev}
There are many variants of the RWA problem and a vast number of studies on each type.
%MENTION REVIEW PAPERS HERE MAYBE??
%Many of the RWA problems have been shown to be NP-hard, see for instance \cite{erlebach2001complexity, li2000partition, chlamtac1992lightpath, chiu2000traffic}, and hence they are computationally expensive typically. %, as a result of which most of the existing works concentrate on heuristic solution approaches.
%Therefore, most of the existing studies concentrate on heuristic solution approaches.
%Most of the works concentrate on heuristic solution approaches, and some develop IP formulations as well.
%various solution approaches to solve each, most of them being heuristics.
Here, we restrict our review to the RWA problem with dedicated path protection scheme and static demand, and summarize the most relevant studies in the sequel.
%We begin with two studies that present IP formulations for RWA problems with similar settings to ours.  

\cite{ramamurthy2003survivable} examine different protection approaches for single-link failures, %in wavelength-routed WDM networks, 
and develop IP formulations for path- and link-based schemes, assuming that a precomputed set of alternate routes are given.
% \cite{ramamurthy2003survivable} does not show NP-hardness. They just say that ``The routing and wavelength assignment (RWA) problem (with no protection for any demands) has been shown to be NP-complete [26]. We anticipate that the problems formulated in ILP’s 1–3 are NP-complete as well."
Their model for dedicated path protection aims to minimize the total number of wavelengths used over all links in the network, %considering both primary and backup lightpaths, 
while enforcing that all the demand is satisfied and the wavelength capacity on the links is not exceeded. 
Using test instances generated for a representative network topology, they compare the performance of the proposed IP models. %solved with CPLEX.
Their problem setup differs from ours in that they do not allow unsatisfied demand assuming sufficient capacity in the network, as such use a different objective than we do.
%{\cpink MAYBE WE SHOULD POINT OUT THE FACT THAT THE WAY THEY FORMULATE THE PROBLEM IS DIFFERENT AND THAT WE ARE GOING TO COMPARE OUR MODEL TO THEIRS...}

\cite{azodolmolky2010novel} study the RWA problem with dedicated path protection, where they assume a precomputed set of pairs of primary and backup paths per request, i.e., each primary path has an associated unique backup path. They present two IP models, which form the basis of their heuristic algorithm designed for the impairments aware RWA problem. %, which are enhanced versions of existing ones to incorporate path protection and quality of transmission considerations. % QoT
The first model considers the requests that  require only a primary lightpath, and aims to minimize the total number of requests that are not accepted. The second model extends the first one by (i) adding another demand category that necessitates backup lightpaths as well, and (ii) minimizing a combined sum of the former objective and the maximum number of times a wavelength is used on a link, in a way that acceptance of the requests requiring protection are prioritized over the ones that do not, and the wavelength usage is of least priority. 
%They also propose a heuristic algorithm and evaluate its performance in comparison with the enhanced versions of two heuristics from the literature, one of them being based on the presented IP formulation.  
The problem setting used for their IP models is similar to ours, especially when it is assumed that no unprotected demand exists. However, it does not allow the primary and backup path options of a request to be arbitrarily combined; they are instead considered in predefined pairs only.
{\cemph The authors test the performances of their proposed heuristic algorithm, two versions of the heuristic from the work of \cite{ezzahdi2006lerp} that they enhance with quality of transmission considerations, and solving of the presented IP formulations via an exact solver. 
In our study, we also test the heuristic by \cite{ezzahdi2006lerp} for {\PRWA} and compare it to our presented methods in Section \ref{sec:comp_study}.
}
%{\cred  that they do not actually test the IP performance and simply use heuristic algos....Actually, they do test the performance of solving the ILP formulations with a solver, but do not mention what solver they use etc. The only thing that we could argue is that they only test the direct solving of the ILP formulations, but do not report the results of other IP-specific tools like BC etc and hence do not convince the reader about the inadequacy of IP approach.....}

We note that, in addition to the aforementioned problem setup differences, our study stands apart from those works in terms of modelling. While the previous IP formulations are link-based, our models are path-based. %decision variables {\cred and accordingly constraints in our model are path based.} 
Also, we present an exact QUBO model, the first for an RWA problem although a {\it constrained} binary quadratic model has been used in \citep{ebrahimzadeh2013binary}.

There are some other relevant works that consider the RWA problem with protection for single-link (or node) failures, either with precomputed paths \citep{lee2006comparison}, or by solving both the routing and the wavelength assignment problems simultaneously \citep{937103,1180339} or sequentially \citep{5502801}. 

Lastly, we note that for many variants of the RWA problem, the complexity has been established to be NP-hard, see for instance \citep{erlebach2001complexity, li2000partition, chlamtac1992lightpath, chiu2000traffic}. 
Despite some structural similarities between those variants and our problem, the complexity of {\PRWA} has remained open.

\section{IP Formulation and Problem Complexity}
\label{sec:statement}

\newcommand{\xsol}{\hat{x}}
\newcommand{\ysol}{\hat{y}}
\newcommand{\xsolalt}{\tilde{x}}
\newcommand{\ysolalt}{\tilde{y}}
\newcommand{\xsolaltalt}{\bar{x}}
\newcommand{\ysolaltalt}{\bar{y}}
\newcommand{\objFncVal}{{\obj}(\xsol,\ysol)}
\newcommand{\objFncValalt}{{\obj}(\xsolalt, \ysolalt)}
\newcommand{\objFncValaltalt}{{\obj}(\xsolaltalt, \ysolaltalt)}
\newcommand{\objFncValLink}{{\obj}_\alpha(\xsol,\ysol)}
\newcommand{\objFncValGranted}{{\obj}_\beta(\xsol)}
\newcommand{\objFncValaltLink}{{\obj}_\alpha(\xsolalt, \ysolalt)}
\newcommand{\objFncValaltGranted}{{\obj}_\beta(\xsolalt)}
\newcommand{\objFncValaltaltLink}{{\obj}_\alpha(\xsolaltalt, \ysolaltalt)}
\newcommand{\objFncValaltaltGranted}{{\obj}_\beta(\xsolaltalt)}

\newcommand{\grantedDiff}{k}
\newcommand{\xsolk}{\bar{x}^\grantedDiff}
\newcommand{\ysolk}{\bar{y}^\grantedDiff}
\newcommand{\objValk}{\bar{\obj}^\grantedDiff}
\newcommand{\objValkLink}{\bar{\obj}_{\alpha}^\grantedDiff}
\newcommand{\objValkGranted}{\bar{\obj}_{\beta}^\grantedDiff}
\newcommand{\objValaltalt}{\bar{\obj}}
\newcommand{\objValaltaltLink}{\bar{\obj}_{\alpha}}
\newcommand{\objValaltaltGranted}{\bar{\obj}_{\beta}}

\newcommand{\objVal}{\hat{\obj}}
\newcommand{\objValalt}{\tilde{\obj}}
\newcommand{\objValLink}{\hat{\obj}_{\alpha}}
\newcommand{\objValGranted}{\hat{\obj}_{\beta}}
\newcommand{\objValaltLink}{\tilde{\obj}_{\alpha}}
\newcommand{\objValaltGranted}{\tilde{\obj}_{\beta}}

\newcommand{\ub}{UB}
\newcommand{\maxLPpairLength}{M}

\newcommand{\betaOne}{\beta^1}
\newcommand{\betaExact}{\beta^{\text{Exact}}}

In this section, we first present an IP formulation for the {\PRWA} problem 
%and afterwards prove that it is NP-hard.
in Section \ref{sec:IP-formulation}, followed by Section \ref{subsec:prioritization} where we 
%we explain the constraints and the objective function of our IP formulation, and then 
propose values for the weight parameters used in combining two objectives, namely granted request maximization and link usage minimization, into a single one in order to provably achieve the desired prioritization of the former over the latter.
%maximizing the number of granted requests over minimizing the link usage.
We then show the complexity of {\PRWA} in Section \ref{sec:complexity} through a reduction from a well-known NP-complete problem.

\subsection{IP Formulation}
\label{sec:IP-formulation}

As formally defined in Section \ref{subsec:ourwork}, the {\PRWA} problem aims to grant maximum number of requests by properly assigning a working and a protection lightpath to each from a precomputed collection, while minimizing the wavelength-link usage as a secondary goal, which we hereafter refer to as link usage for simplicity. 

{\cemph
We model an optical network as a directed graph $G=(V,\linkSet)$, with $V$ and $\linkSet$ respectively denoting the set of nodes and the set of directed edges that join ordered pairs of nodes, where it is possible to have multiple arcs with the same start and end nodes. In telecommunications context, we refer the directed edges of the input graph as links.}
 %, where each request is defined as a pair of source and destination nodes. %; that is the demand comes in the form of connection requests between particular nodes. 
We denote the set of requests by $\requestSet$, and the set of wavelengths by $\wavelengthSet$. For each request $\requestInd \in \requestSet$, we represent the set of alternative working and protection lightpaths with $\workingSet^{\requestInd}$ and $\protectionSet^{\requestInd}$, respectively, which are obtained by combining the available precomputed set of paths and wavelengths.
%, where a lightpath is a path in $G$ coupled with a wavelength, and a path is an ordered sequence of links such that the consecutive ones are joined by a common distinct node.
The length of a working (protection) lightpath $ \workingInd $ ($ \protectionInd $) for request $\requestInd \in \requestSet$, i.e., the number of the links it contains, is denoted by $ \length_\workingInd^\requestInd $ ($ \length_\protectionInd^\requestInd $). For convenience, we use $\linksOfLightpath[\ell]$ to represent the set of links that a given lightpath $\ell$ contains, and $\wavelengthOfLightpath[\ell]$ for the wavelength associated with $\ell$. 
%In order to accept a request $\requestInd \in \requestSet$, we need to reserve two lightpaths, one from the working set $ \workingSet^{\requestInd} $, and the other from the protection set $\protectionSet^{\requestInd}$.

%In assigning lightpaths to requests, we need to ensure that the pair of working and protection lightpaths selected to grant a request are link-disjoint,
%Moreover, we should make sure that lightpaths having the same wavelength and sharing a link are not used simultaneously, so that the wavelengths do not interfere with each other.
In order to help compactly represent the constraints of \PRWA, we % incorporate these constraints into our IP model, we 
define four \emph{conflict sets},  $\conflictSet_1, \ldots, \conflictSet_4$.  %, which we explain next.
The first conflict set $\conflictSet_1$ serves to enforce the pair of working and protection lightpaths for a given request to be link-disjoint. 
It is comprised of $(\requestInd, \workingInd, \protectionInd)$ triplets such that the working lightpath $\workingInd$ and the protection lightpath $\protectionInd$ for request $\requestInd$ have at least one link in common.
Namely,
\begin{align*}
    \conflictSet_1  \coloneqq  \left\{ (\requestInd, \workingInd, \protectionInd) \colon \requestInd \in \requestSet, \ \workingInd \in \workingSet^\requestInd, \ \protectionInd \in \protectionSet^\requestInd, \  \linksOfLightpath[\workingInd] \cap \linksOfLightpath[\protectionInd] \neq \varnothing \right\}.
\end{align*}
%\noindent where $\linksOfLightpath[\workingInd] \subseteq E$ and $\linksOfLightpath[\protectionInd] \subseteq E$ represent the set of links that the lightpaths $\workingInd$ and $\protectionInd$ contain, respectively.

The remaining three conflict sets are used to prevent the concurrent use of lightpaths having the same wavelength and sharing a link. 
Considering such lightpaths in pairs, there can be one working and one protection, two working, or two protection lightpaths fulfilling these criteria, which we address through sets $\conflictSet_2$, $\conflictSet_3 $, and $\conflictSet_4$, respectively. 
Let $\conflictSet_2$ be the set of $(\requestInd_1, \requestInd_2, \workingInd, \protectionInd)$ quadruplets such that the working and protection lightpaths $\workingInd$ and  $\protectionInd$ for distinct requests $\requestInd_1$ and $\requestInd_2$ have the same wavelength and at least one link in common:
%{\cred DON"T WE NEED $\requestInd_1 \neq \requestInd_2$??}
%
\begin{align*}
    \conflictSet_2 \coloneqq \left\{ (\requestInd_1, \requestInd_2, \workingInd, \protectionInd) \colon \requestInd_1, \requestInd_2 \in \requestSet, \ \requestInd_1 \neq \requestInd_2, \ \workingInd \in \workingSet^{\requestInd_1}, \ \protectionInd \in \protectionSet^{\requestInd_2}, \ \wavelengthOfLightpath[\workingInd] = \wavelengthOfLightpath[\protectionInd], \ \linksOfLightpath[\workingInd] \cap \linksOfLightpath[\protectionInd] \neq \varnothing \right\}.
\end{align*}
\noindent %where $\wavelengthOfLightpath(\workingInd)$ and $\wavelengthOfLightpath(\protectionInd)$ denote the wavelengths of the working and protection lightpaths $\workingInd$ and $\protectionInd$, respectively. 
The sets $\conflictSet_3$ and $\conflictSet_4$ contain a similar collection of quadruplets as $\conflictSet_2$ does, but with only working and only protection lightpaths, respectively:
\begin{align*}
    \conflictSet_3  &\coloneqq  \left\{ (\requestInd_1, \requestInd_2, \workingInd_1, \workingInd_2) \colon \requestInd_1, \requestInd_2 \in \requestSet, \ \workingInd_1 \in \workingSet^{\requestInd_1}, \ \workingInd_2 \in \workingSet^{\requestInd_2}, \ \wavelengthOfLightpath[\workingInd_1] = \wavelengthOfLightpath[\workingInd_2], \ \linksOfLightpath[\workingInd_1] \cap \linksOfLightpath[\workingInd_2] \neq \varnothing \right\}, \\[0.2cm]
    \conflictSet_4  &\coloneqq  \left\{ (\requestInd_1, \requestInd_2, \protectionInd_1, \protectionInd_2) \colon \requestInd_1, \requestInd_2 \in \requestSet, \ \protectionInd_1 \in \protectionSet^{\requestInd_1}, \ \protectionInd_2 \in \protectionSet^{\requestInd_2}, \ \wavelengthOfLightpath[\protectionInd_1] = \wavelengthOfLightpath[\protectionInd_2], \ \linksOfLightpath[\protectionInd_1] \cap \linksOfLightpath[\protectionInd_2] \neq \varnothing \right\}.
\end{align*}

\begin{example}
\upshape
In Figure \ref{fig:prwa_inst_conflicts}, an example {\PRWA} network is illustrated with two requests together with their working and protection lightpath alternatives. 
The source and destination nodes of the two requests are those with $s^r$ and $t^r$ labels for $\requestInd \in \{1,2\}$, respectively.
The lightpaths are shown with red and green, where each color symbolizes a distinct wavelength, and the lines being solid or dashed indicate whether the lightpath is in the working or protection set, respectively.
The request and working/protection indices of the lightpaths are shown beside them in the same color as the lines representing them.
For {request~1}, there are three working and one protection lightpaths, and for request 2, there is one working and two protection lightpaths.

\begin{figure}[!h]
    \centering
    \tikzset{main_node/.style={circle,draw,line width=1pt,minimum size=0.5cm,inner sep=0pt},}
    \tikzstyle{arc} = [draw,line width=0.7pt,-latex]
    \tikzstyle{edge} = [draw,line width=1.1pt,-]
    \tikzstyle{highlighted red edge} = [draw,line width=1.5pt,-,red, opacity=0.7,rounded corners]
    \tikzstyle{highlighted green edge} = [draw,line width=1.5pt,-,green!70!black, opacity=0.7,rounded corners]

    \scalebox{1}{
    \begin{tikzpicture}
    
    \node[main_node, fill=gray!30!white] (s_a) {$s^1$};

    %\begin{scope}[on background layer]
        
    \node[main_node, fill=gray!80!white] (s_b) [below right = 1.25 cm and 1.5 cm of s_a] {$s^2$};
    \node[main_node] (a1) [above right = 1.25 cm and 1.5 cm of s_a] { $a$};
    \node[main_node, fill=gray!30!white] (t_a) [right = 2 cm of a1] {$t^1$};
    \node[main_node, fill=gray!80!white] (t_b) [below right = 1.25 cm and 1.5 cm of t_a] {$t^2$};
    \node[main_node] (b1) [right = 2 cm of s_b] {$b$ };
    \node[main_node] (mid) [right = 2.625 cm of s_a] { $c$ };

    %\path[edge, every node/.style={sloped,anchor=south,auto=false}]
         %(s_a) to (a1) 
         %(s_a) to (s_b)
         %(s_a) to (mid)
         %(a1) to (mid)
         %(a1) to (t_a)
         %(s_b) to (b1)
         %(s_b) to (mid)
         %(mid) to (t_a)
         %(mid) to (b1)
         %(mid) to (t_b)
         %(b1) to (t_b)
         %(t_a) to (t_b);
         
    \draw[arc] (s_a.60) to (a1.200);
    \draw[arc] (a1.225) to (s_a.35);
    \draw[arc] (s_a.10) to (mid.170);
    \draw[arc] (mid.190) to (s_a.-10);
    \draw[arc] (s_a.-35) to (s_b.135);
    \draw[arc] (s_b.160) to (s_a.-60);
    \draw[arc] (a1.-35) to (mid.115);
    \draw[arc] (mid.140) to (a1.-60);
    \draw[arc] (a1.20) to (t_a.160);
    \draw[arc] (t_a.185) to (a1.-5);
    \draw[arc] (s_b.10) to (b1.170);
    \draw[arc] (b1.195) to (s_b.-15);
    \draw[arc] (s_b.35) to (mid.-115);
    \draw[arc] (mid.-140) to (s_b.60);
    \draw[arc] (mid.70) to (t_a.-140);
    \draw[arc] (t_a.-115) to (mid.45);    
    \draw[arc] (mid.10) to (t_b.170);
    \draw[arc] (t_b.-170) to (mid.-10);    
    \draw[arc] (t_a.-30) to (t_b.130);
    \draw[arc] (t_b.155) to (t_a.-55);   
    \draw[arc] (b1.35) to (t_b.-135);
    \draw[arc] (t_b.-115) to (b1.15);   
    \draw[arc] (mid.-30) to (b1.120);
    \draw[arc] (b1.145) to (mid.-55);   
                     
    %red lightpaths
    \draw[highlighted red edge,-latex,very near start] (s_a.90) -- node[midway, sloped, yshift = .2cm] {\scriptsize $r=1, w=1$} (a1.120) --   (t_a.100); %(r=1,w=1)
    
    \draw[highlighted red edge,-latex,very near start] (s_a.-100) -- ([yshift=-.35cm]s_b.center) --  node[midway, below, yshift = .05cm] {\scriptsize $r=1, w=3$} ([xshift=.1cm,yshift=-.35cm]b1.center) -- ([xshift=.2cm]t_b.0) -- ([yshift=.1cm]t_a.0); %(r=1,w=3)

    \draw[highlighted red edge,-latex,very near start, densely dashed] (s_b.10) --  node[midway, sloped, xshift=0.15cm, yshift = -.15cm] {\scriptsize $r=2, p=1$} ([yshift=-.3cm]mid.center) -- (t_b.-120) ; %(r=2,p=1)
    
    \draw[highlighted red edge,-latex,very near start] (s_b.-50) -- ([xshift=0.1cm, yshift=-0.2cm]b1.center) --  node[midway,sloped, xshift=-0.05cm, yshift=.3cm] {\scriptsize $r=2, w=1$} (t_b.-95); %(r=2,w=1)
    
    %green lightpaths
    \draw[highlighted green edge,-latex,very near start] (s_a.15) -- ([xshift=.05cm, yshift=-.2cm]a1.45) -- ([yshift=.25cm]mid.center) -- node[midway, sloped, yshift=0.2cm] {\scriptsize $\ r=1, w=2$} (t_a.195); %(r=1,w=2)
    
    \draw[highlighted green edge,-latex,very near start, densely dashed] (s_a.35) -- node[midway, xshift = 0cm, yshift = .2cm] {\scriptsize $r=1, p=1$} ([xshift=.3cm, yshift=.15cm]mid.center) --  (t_a.-90); %(r=1,p=1)

    \draw[highlighted green edge,-latex,very near start, densely dashed] (s_b.110) -- ([xshift=0.1cm]s_a.-30) -- node[midway, yshift=-0.2cm] {\scriptsize $r=2, p=2$} ([yshift=-.15cm]mid.center) -- (t_b.210); %(r=2,p=2)
    
    %\end{scope}
  
    \end{tikzpicture}
    }
    \caption{An {\PRWA} network and two requests with their precomputed working and protection lightpaths.} 
    \label{fig:prwa_inst_conflicts}
\end{figure}

Let us give some example tuples for the conflict sets using the network in Figure \ref{fig:prwa_inst_conflicts}.
The link $(c, t^1)$ is common in the lightpaths labeled with $\requestInd=1, \workingInd=2$ and $\requestInd=1, \protectionInd=1$, which yields $(\requestInd, \workingInd, \protectionInd) = (1,2,1) \in \conflictSet_1$.
%and that it is shared by a pair of working and protection paths having the same wavelength (green) gives  $(\requestInd_1, \requestInd_2, \workingInd, \protectionInd) = (1,1,2,1) \in \conflictSet_2$. %{\cred THE SAME AS THE PREVIOUS ONE??}
Furthermore, the link $(s^2, b)$ being contained in two lightpaths having the same wavelength (red) makes $(\requestInd_1, \requestInd_2, \workingInd_1, \workingInd_2) = (1,2,3,1) \in \conflictSet_3$, and $(s^1, c)$ being shared by two protection lightpaths with the same wavelength (green) leads to $(\requestInd_1, \requestInd_2, \protectionInd_1, \protectionInd_2) = (1,2,1,2) \in \conflictSet_4$.
Since there is no pair of distinct requests whose working and protection lightpaths have the same wavelength, $\conflictSet_2 = \varnothing$ here. 

In this example, it is possible to accept both of the requests by selecting the working and protection lightpaths $\workingInd = 1$ and $\protectionInd = 1$ for request $\requestInd = 1$, and also for $\requestInd = 2$. This solution is indeed the best option for link usage as well, because having granted all of the given requests with lightpaths of length two, it is not possible to use any fewer links as each lightpath is of length at least two in this example.
\hfill %$\Halmos$
\end{example}

Using the notation introduced above and  two sets of binary decision variables defined as	
\begin{align*}
  x_\workingInd^\requestInd \ &= \ 
  \begin{cases}
    1, \quad &\text{if working lightpath }  \workingInd \in \workingSet^\requestInd  \text{ is assigned to request } \requestInd \in \requestSet\\
    0, \quad &\text{otherwise} 
  \end{cases}
\\
  y_\protectionInd^\requestInd \ &= \ 
  \begin{cases}
    1, \quad &\text{if protection lightpath }  \protectionInd \in \protectionSet^\requestInd \text{ is assigned to request } \requestInd  \in \requestSet\\
    0, \quad &\text{otherwise}
  \end{cases}
\end{align*}%
\noindent we now present a novel IP formulation as follows:  %
\begin{subequations}
\label{m:IP}
\begin{alignat}{3}
\min \quad & \objFnc \ \coloneqq \ \alpha \sum_{\requestInd\in \requestSet} \left( \sum_{\workingInd\in \workingSet^\requestInd} \length_\workingInd^\requestInd x_\workingInd^\requestInd \ + \ \sum_{\protectionInd \in \protectionSet^\requestInd} \length_\protectionInd^\requestInd y^\requestInd_{\protectionInd}  \right)  
\ - \ 
\beta \ \sum_{\requestInd\in \requestSet}\sum_{\workingInd \in \workingSet^\requestInd} x_\workingInd^\requestInd \span \span \label{eq:OF} \\[0.25cm] %use one \span command for each & you want to ignore
\text{s.t.} \quad & \sum_{\workingInd \in \workingSet^\requestInd} x_\workingInd^\requestInd \ - \ \sum_{\protectionInd \in \protectionSet^\requestInd} y_\protectionInd^\requestInd \ = \ 0 \qquad \qquad && \requestInd \in \requestSet \label{eq:w_p_equality} \\[0.1cm]
&\sum_{\workingInd \in \workingSet^\requestInd} x_\workingInd^\requestInd  \ \leq \ 1 \qquad \qquad &&\requestInd \in \requestSet  \label{eq:w_p_at_most_one}\\[0.1cm]
& x_{\workingInd}^{\requestInd} \ + \ y_{\protectionInd}^{\requestInd} \ \leq \ 1 \qquad \qquad && (\requestInd, \workingInd,\protectionInd) \in \conflictSet_1 \label{eq:conflicts1}\\[0.1cm]
& x_{\workingInd}^{\requestInd_1} \ + \ y_{\protectionInd}^{\requestInd_2} \ \leq \ 1 \qquad \qquad &&  (\requestInd_1, \requestInd_2, \workingInd, \protectionInd) \in \conflictSet_{2} \label{eq:conflicts2}\\[0.1cm]
& x_{\workingInd_1}^{\requestInd_1} \ + \ x_{\workingInd_2}^{\requestInd_2} \ \leq \ 1 \qquad \qquad &&  (\requestInd_1, \requestInd_2, \workingInd_1, \workingInd_2) \in \conflictSet_{3}  \label{eq:conflicts3}\\[0.1cm]
& y_{\protectionInd_1}^{\requestInd_1} \ + \ y_{\protectionInd_2}^{\requestInd_2} \ \leq \ 1 \qquad \qquad &&  (\requestInd_1, \requestInd_2, \protectionInd_1, \protectionInd_2) \in \conflictSet_{4}  \label{eq:conflicts4}\\[0.1cm]
& x_\workingInd^\requestInd, \ y_{\protectionInd}^\requestInd \ \in \ \{0,1\} \qquad  \qquad && \requestInd \in \requestSet, \ \workingInd \in \workingSet^\requestInd, \ \protectionInd \in \protectionSet^\requestInd \label{eq:domain}
\end{alignat}%
\end{subequations}%
\noindent where $\alpha$ and $\beta$ are predetermined positive constants.

Constraint set \eqref{eq:w_p_equality} enforces that  the same number of working and protection lightpaths are selected to grant a request, and \eqref{eq:w_p_at_most_one} ensures that at most one working lightpath is assigned to each request.   
Constraint set \eqref{eq:conflicts1} guarantees that the selected working and protection lightpaths for each request are link-disjoint, while \eqref{eq:conflicts2}--\eqref{eq:conflicts4} make sure that the lightpaths having the same wavelength and sharing a link are not chosen simultaneously.
Finally, constraint set \eqref{eq:domain} states the domains of the decision variables. 
%In the rest of the paper, we refer to a point $(x,y)$ satisfying the domain constraints \eqref{eq:domain} as a \emph{binary solution}, and call it \emph{feasible/infeasible to the IP} if it satisfies/violates all/any of  \eqref{eq:w_p_equality}-\eqref{eq:conflicts4}.

\begin{comment}
%We decided not to do this. Their formulation may well be stronger than ours.
{\cpink 
\begin{claim}
Our formulation is stronger than the ones in \cite{ramamurthy2003survivable,azodolmolky2010novel} ??? 
\end{claim}
}
\end{comment}

The objective function \eqref{eq:OF} combines the two goals of {\PRWA}, minimizing the number of links used and maximizing the number of requests granted, as a weighted sum.
%of  and , where the latter term has a minus sign as we want to maximize it.
%In reality, it is the granting of received requests that achieves the actual gains for the service providers \citep{shen2005shared}; yet, saving from link usage can potentially leave room in the network for future demands. Hence, the primary goal is specified as the maximization of request granting, while the secondary one is meant for searching alternative allocation of routes with possibly fewer link usage.
%{\cpink LINK THE SECONDARY OBJECTIVE TO WAVELENGTH LINK USAGE WHICH IS USED IN THE LITERATURE, FOR INSTANCE IN \cite{zang2003path}!!!}
As mentioned in the introduction, the latter goal must be prioritized over the former, which we detail next.

\subsection{Objective Prioritization}
\label{subsec:prioritization}
We now formally define what prioritization of request granting over link usage means, and propose $\alpha$ and $\beta$ values that serve the purpose in \eqref{eq:OF}.
We first introduce some notation to be used in the sequel.
Let $\objFncLink$ and $\objFncGranted$ be two functions respectively corresponding to the number of links used and the number of requests granted at a solution, i.e.,
\begin{align*}
\objFncLink  \ \coloneqq \ \sum_{\requestInd\in \requestSet} \left( \sum_{\workingInd\in \workingSet^\requestInd} \length^\requestInd_\workingInd \ x_\workingInd^\requestInd \ + \ \sum_{\protectionInd \in \protectionSet^\requestInd} \length^\requestInd_\protectionInd \ y^\requestInd_{\protectionInd}  \right), \quad 
%\\[0.15cm]
% 
\objFncGranted \ \coloneqq \ \sum_{\requestInd\in \requestSet}\sum_{\workingInd \in \workingSet^\requestInd} x_\workingInd^\requestInd, 
\end{align*}
so that the objective function \eqref{eq:OF} can be equivalently written as
\begin{align*}
\objFnc \  = \ \alpha \ \objFncLink \ - \ \beta \ \objFncGranted.    
\end{align*}

\noindent 

Furthermore, to ease the presentation, for any given feasible solution  $(\xgiven, \ygiven)$ (where {\tiny $^\bullet$} represents any operator such as hat, tilde, and bar), %$\hat{\phantom{x}}, \tilde{\phantom{x}}, \bar{\phantom{x}}$),  %and $(\xsolalt, \ysolalt)$ be two solutions feasible to the IP model \eqref{m:IP}, for which 
we respectively define the associated IP objective value and its components as
$$\fgiven \ \coloneqq \ \obj(\xgiven,\ygiven), \quad
\fgiven_{\alpha} \ \coloneqq \  \obj_{\alpha}(\xgiven,\ygiven), \quad 
\fgiven_{\beta} \ \coloneqq \ \obj_{\beta}(\xgiven),$$
and the worst-case and best-case link usage of a feasible solution granting the same number of requests as 
\begin{align*}
&\fgiven_{\alpha}^{\text{max}} \  \coloneqq \max \left\{ \objFncLink ~|~ (x,y) \text{ satisfies } \eqref{eq:w_p_equality}-\eqref{eq:domain}, \objFncGranted = \fgiven_{\beta} \right\}, \\ % \limits_{\objFncGranted = \objValGranted}\{\objFncLink\} 
& \fgiven_{\alpha}^{\text{min}} \  \coloneqq \min \left \{\objFncLink ~|~ (x,y) \text{ satisfies } \eqref{eq:w_p_equality}-\eqref{eq:domain}, \objFncGranted = \fgiven_{\beta} \right\}. % \limits_{\objFncGranted = \objValaltGranted}\{\objFncLink\}
\end{align*}

\begin{definition}[Prioritization Condition]
\label{def:prioritization}
Request granting is prioritized over link usage, if for any pair of feasible solutions $(\xsol, \ysol)$ and $(\xsolalt, \ysolalt)$ with $ \objValGranted > \objValaltGranted $, we have $\objVal < \objValalt$, i.e., the marginal contribution of granting a request to the objective function \eqref{eq:OF} is always negative. That is,   
$
    \alpha \objValLink - \beta \objValGranted   <  \alpha \objValaltLink - \beta \objValaltGranted   \  \label{eq:prioritization_v0}
$
for all  feasible $(\xsol, \ysol)$ and $(\xsolalt, \ysolalt)$  with $\objValGranted > \objValaltGranted$.
\hfill %\Halmos
\end{definition}

Considering the largest and smallest realizations of the left- and right-hand sides in terms of link usage, respectively, the prioritization condition can be equivalently written as
\begin{align}
    \alpha \objValLink^{\text{max}} - \beta \objValGranted  \ < \ \alpha \objValaltLink^{\text{min}} - \beta \objValaltGranted  \ \  
    \text{for all feasible } (\xsol, \ysol) \text{ and } (\xsolalt, \ysolalt) \text{ with } \objValGranted > \objValaltGranted.
  \label{eq:prioritization}
\end{align}

\noindent This condition can also be expressed with the help of an optimization model: 
\begin{align}
\frac{\beta}{\alpha} 
\ > \ 
\LBprioExact
\ \coloneqq \
\max \left\{\frac{ \objValLink^{\text{max}} - \objValaltLink^{\text{min}}  }{\objValGranted - \objValaltGranted} 
\colon
(\xsol, \ysol) \text{ and } (\xsolalt, \ysolalt) \text{ are feasible with } \objValGranted > \objValaltGranted
\right\}. \label{eq:prioritization_optmodel0}
\end{align}

\noindent Indeed, it is possible define another optimization model by only considering the solutions differing by one in their number of granted requests,
\begin{align}
\LBprioOne
\ \coloneqq \
\max \left\{\frac{ \objValLink^{\text{max}} - \objValaltLink^{\text{min}}  }{\objValGranted - \objValaltGranted} 
\colon
(\xsol, \ysol) \text{ and } (\xsolalt, \ysolalt) \text{ are feasible with } \objValGranted = \objValaltGranted + 1
\right\},  \label{eq:prioritization_optmodel1}
\end{align}
which would achieve what \eqref{eq:prioritization_optmodel0} does, as provided in the  proposition below.

\begin{proposition}%[{\cpink ??????????????????}]
\label{prop:beta_model_onediff}
The optimization models in \eqref{eq:prioritization_optmodel0} and \eqref{eq:prioritization_optmodel1} yield the same optimal values; that is, $\LBprioExact = \LBprioOne$. This implies that the prioritization condition given in \eqref{eq:prioritization_optmodel0} can also be achieved by setting $\alpha, \beta > 0$ such that 
$\frac{\beta}{\alpha}  >  \LBprioOne $.

\end{proposition}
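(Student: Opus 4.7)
The plan is to show $\LBprioExact = \LBprioOne$ by establishing both inequalities. The direction $\LBprioExact \geq \LBprioOne$ is immediate, since every pair feasible for the optimization in \eqref{eq:prioritization_optmodel1} (i.e., those with $\objValGranted = \objValaltGranted + 1$) is also feasible for the optimization in \eqref{eq:prioritization_optmodel0} (those with $\objValGranted > \objValaltGranted$). So the work is all in showing $\LBprioExact \leq \LBprioOne$.

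To that end, for a nonnegative integer $k$, let me write $M(k)$ and $m(k)$ respectively for the maximum and minimum of $\objFncLink$ over all feasible solutions with $\objFncGranted = k$, whenever such solutions exist. Note first that if a feasible solution with granted count $k'$ exists, then so does one with granted count $k$ for every $0 \le k \le k'$: given a feasible $(x,y)$ granting $k'$ requests, choose any granted request $\requestInd$ and set $x_\workingInd^\requestInd = y_\protectionInd^\requestInd = 0$ for the lightpaths assigned to it. The resulting point still satisfies \eqref{eq:w_p_equality}--\eqref{eq:domain}, since zeroing variables cannot violate any of those constraints, and it grants exactly $k'-1$ requests. Iterating, all integer granted counts from $0$ up to the largest feasible one are attainable, so $M(\cdot)$ and $m(\cdot)$ are well-defined on a contiguous range.

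Now fix any feasible pair $(\xsol, \ysol)$ and $(\xsolalt, \ysolalt)$ with $\objValGranted = k' > k = \objValaltGranted$. Since $\objValLink \le M(k')$ and $\objValaltLink \ge m(k)$, it suffices to bound $M(k') - m(k)$. The key step is the telescoping identity
\begin{align*}
M(k') - m(k) \ &= \ \sum_{i=k}^{k'-1} \bigl(M(i+1) - m(i)\bigr) \ + \ \sum_{i=k+1}^{k'-1} \bigl(m(i) - M(i)\bigr)  \\
& \le \ \sum_{i=k}^{k'-1} \bigl(M(i+1) - m(i)\bigr),
\end{align*}
where the inequality uses $m(i) \le M(i)$ for each intermediate $i$ (valid because, by the argument above, each such granted count is feasible). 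Each term $M(i+1) - m(i)$ is realized by a pair of feasible solutions differing in granted count by exactly one, and hence is at most $\LBprioOne$. Therefore $M(k') - m(k) \le (k' - k)\LBprioOne$, which gives $(\objValLink - \objValaltLink)/(\objValGranted - \objValaltGranted) \le \LBprioOne$ for the original pair. Taking the supremum over all such pairs yields $\LBprioExact \le \LBprioOne$, completing the proof. The final statement of the proposition then follows from the equivalent form \eqref{eq:prioritization_optmodel0} of the prioritization condition.

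The only subtle point I anticipate is ensuring that $M(i)$ and $m(i)$ are actually defined for every integer $i$ with $k \le i \le k'$; without this, the telescoping sum is meaningless. The request-dropping observation handles this cleanly. Beyond that, the argument is just a discrete convex-combination/telescoping manipulation.
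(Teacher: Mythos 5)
Your proof is correct. The core idea---reducing an arbitrary gap in granted counts to a chain of unit steps, using the fact that the minimum link usage at an intermediate count is at most the maximum at that same count---is the same as in the paper's proof, but you package it differently: the paper establishes $\LBprioOne \geq \LBprioExact$ indirectly, by showing that for every ratio $\beta/\alpha$ the condition $\frac{\beta}{\alpha} > \LBprioOne$ implies the condition $\frac{\beta}{\alpha} > \LBprioExact$ via a chain of prioritization inequalities over solutions whose granted counts differ by one, whereas you bound the optimal values directly through the telescoping identity for $M(k') - m(k)$, never invoking $\alpha$ or $\beta$ until the final sentence. Your version has one genuine advantage: you explicitly prove that feasible solutions exist at every intermediate granted count (by zeroing out the variables of a granted request, which cannot violate \eqref{eq:w_p_equality}--\eqref{eq:domain}), a fact the paper's chain argument silently assumes when it posits a solution $(\xsol,\ysol)$ whose granted count sits strictly between those of $(\xsolalt,\ysolalt)$ and $(\xsolaltalt,\ysolaltalt)$. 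The easy direction $\LBprioExact \geq \LBprioOne$ is handled identically in both proofs via feasible-region containment. One cosmetic remark: the objective of \eqref{eq:prioritization_optmodel0} is already $\bigl(\objValLink^{\text{max}} - \objValaltLink^{\text{min}}\bigr)/\bigl(\objValGranted - \objValaltGranted\bigr) = \bigl(M(k') - m(k)\bigr)/(k'-k)$, so the intermediate inequalities $\objValLink \le M(k')$ and $\objValaltLink \ge m(k)$ are not needed; your bound on $M(k') - m(k)$ is exactly the quantity required.
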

\begin{proof}
See \ref{OS:proof:prop_beta_one_model}. \hfill %\Halmos
\end{proof}

{\cemph
For practical purposes, we assume $\alpha = 1$ and that $\beta$ can only take integer values.
In this case, letting $ \betaTight$ denote the smallest integer $\beta$ value satisfying the prioritization condition in \eqref{eq:prioritization_optmodel1}, we have  $ \betaTight = 1 + \LBprioOne $.
However, obtaining $\betaTight$ or at least a reasonable upper bound for it necessitates solving of a non-trivial optimization problem, which could be computationally even more expensive than the original {\PRWA} problem.
% (probably even more challenging than the original one), which requires some computation time and thus does not actually comply with our goal to solve the {\PRWA} problem quickly.
%However, in cases where solution technologies like DA need the magnitudes of parameter values not to exceed certain limits, one may solve a preliminary optimization model to find a valid $\betaTight$ value and thereby make the instance solvable with the associated solver.
Next, we derive sufficient condition for prioritization in terms of given instance parameters. %, without any need to solve an optimization problem.
%In the light of the above results, we next derive a sufficient condition for prioritization in terms of given instance parameters. 
}

%{\cpink In order to prioritize request granting, its weight $ \beta $ should be selected sufficiently larger than the weight $ \alpha $ of link usage.}
%Proposition \ref{prop:obj_weights} provides a condition for $\alpha$ and $\beta$ that achieves the intended prioritization of request granting. %, by establishing that the marginal contribution of accepting a request to the objective function \eqref{eq:OF} is always negative.

\begin{proposition}[IP objective weight selection]
\label{prop:obj_weights}
%\begin{proposition}[IP objective weight selection]
%\label{prop:obj_weights}
Selecting $\alpha, \beta > 0$ such that 
\begin{align}
    \frac{\beta}{\alpha} \ &> \ |\requestSet| \ (\maxLPpairLength - 2) \ + \ 2 \label{eq:obj_coef_ratio_suff_lb}
\end{align}
prioritizes request granting over link usage in \eqref{eq:OF} for any feasible solution to the IP, i.e., solutions accepting more requests yield lower objective values, where ${\maxLPpairLength = \max\limits_{\requestInd \in \requestSet}\left\{ \max\limits_{\workingInd \in \workingSet^\requestInd} \big\{ \length^{\requestInd}_{\workingInd} \big\} + \max\limits_{\protectionInd \in \protectionSet^\requestInd}\big\{ \length^{\requestInd}_{\protectionInd} \big\} \right\}}$.
%where $\maxLPpairLength = \max_{\requestInd \in \requestSet}\left\{ \length^{\requestInd}_{wmax} +  \length^{\requestInd}_{pmax} \right\}$ with $ \length^{\requestInd}_{wmax} = \max_{\workingInd \in \workingSet^\requestInd}\left\{ \length^{\requestInd}_{\workingInd} \right\}$ and  $ \length^{\requestInd}_{pmax} = \max_{\protectionInd \in \protectionSet^\requestInd}\left\{ \length^{\requestInd}_{\protectionInd} \right\}$. 
%\end{proposition}
%

\end{proposition}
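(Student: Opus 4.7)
The plan is to invoke Proposition~\ref{prop:beta_model_onediff} and show that $|\requestSet|(\maxLPpairLength - 2) + 2$ is an upper bound on $\LBprioOne$. Since the proposition guarantees that any $\beta/\alpha > \LBprioOne$ yields the prioritization property, the chain $\beta/\alpha > |\requestSet|(\maxLPpairLength - 2) + 2 \geq \LBprioOne$ would complete the argument. I would therefore focus on bounding the ratio in \eqref{eq:prioritization_optmodel1} from above, where the denominator $\objValGranted - \objValaltGranted$ equals exactly $1$.

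For any feasible pair $(\xsol,\ysol)$, $(\xsolalt,\ysolalt)$ with $\objValGranted = \objValaltGranted + 1$, I would bound the numerator $\objValLink^{\text{max}} - \objValaltLink^{\text{min}}$ term by term. For the first term, constraints \eqref{eq:w_p_equality} and \eqref{eq:w_p_at_most_one} imply that every granted request is assigned exactly one working and one protection lightpath; hence for each such request $\requestInd$, its contribution to link usage is at most $\max_{\workingInd \in \workingSet^\requestInd} \length^\requestInd_\workingInd + \max_{\protectionInd \in \protectionSet^\requestInd} \length^\requestInd_\protectionInd \leq \maxLPpairLength$ by the definition of $\maxLPpairLength$. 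Summing over granted requests yields $\objValLink^{\text{max}} \leq \objValGranted \cdot \maxLPpairLength$. For the second term, since every precomputed lightpath consists of at least one link, each granted request contributes at least $2$ to the link usage (one for the working and one for the protection lightpath), giving $\objValaltLink^{\text{min}} \geq 2 \objValaltGranted$.

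Combining these bounds with $\objValGranted = \objValaltGranted + 1$ gives
\[
\objValLink^{\text{max}} - \objValaltLink^{\text{min}} \ \leq \ \objValGranted \cdot \maxLPpairLength - 2 \objValaltGranted \ = \ (\objValaltGranted + 1)\maxLPpairLength - 2 \objValaltGranted \ = \ \objValaltGranted(\maxLPpairLength - 2) + \maxLPpairLength.
\]
Since at most $|\requestSet|$ requests can be granted, $\objValaltGranted \leq |\requestSet| - 1$, so the expression is at most $(|\requestSet| - 1)(\maxLPpairLength - 2) + \maxLPpairLength = |\requestSet|(\maxLPpairLength - 2) + 2$. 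This establishes $\LBprioOne \leq |\requestSet|(\maxLPpairLength - 2) + 2$ and concludes the proof.

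The main step requiring care is the upper bound on the worst-case link usage: because $\maxLPpairLength$ is defined as a maximum \emph{over requests} of the sum of the longest working and longest protection lightpath lengths for that request, rather than a sum over requests, the per-request aggregation must be done carefully to avoid overcounting or undercounting. The remaining arithmetic is routine, and the lower bound on the best-case link usage relies only on the natural assumption that every precomputed lightpath contains at least one link.
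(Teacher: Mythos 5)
Your proposal is correct and follows essentially the same route as the paper's proof: both invoke Proposition~\ref{prop:beta_model_onediff} to reduce to the case $\objValGranted = \objValaltGranted + 1$, bound the worst-case usage by $\objValGranted \maxLPpairLength$ and the best-case usage by $2\objValaltGranted$, and arrive at the same bound $|\requestSet|(\maxLPpairLength-2)+2$ by the same arithmetic (the paper phrases it as a contradiction using $\objValGranted \leq |\requestSet|$, you phrase it directly using $\objValaltGranted \leq |\requestSet|-1$, which is equivalent).
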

\begin{proof}
See  \ref{OS:proof:prop_obj_weights}. \hfill %\Halmos
\end{proof}

Proposition \ref{prop:obj_weights} provides a lower bound on $\frac{\beta}{\alpha}$ that is sufficient to make request granting the primary goal. 
Note that computing this lower bound does not involve solution of an optimization problem and hence makes it easy to decide on safe objective parameter combinations for a given instance. %, thus serves well to our fundamental aim of solving the \PRWA~problem quickly. 
{\cemph Letting $\betaBase$ denote the least possible $\beta$ value from the condition in \eqref{eq:obj_coef_ratio_suff_lb} (assuming $\alpha = 1$), we have $ {\betaBase \ = \ |\requestSet| \ (\maxLPpairLength - 2) \ + \ 3}$.
Note that by definition $\betaBase \geq \betaTight $.
}

%In Proposition \ref{prop:obj_weights_tightLB}
We now show that there exist examples where the bound in \eqref{eq:obj_coef_ratio_suff_lb} is indeed tight.
%; namely, a value for $\frac{\beta}{\alpha}$ that does not exceed the right-hand side of \eqref{eq:obj_coef_ratio_suff_lb} is not enough to prioritize request granting over link usage. 

\begin{proposition}[Tight example for the weight selection]
\label{prop:obj_weights_tightLB}
% \begin{proposition}[Tight example for the weight selection]
% \label{prop:obj_weights_tightLB}
There exist {\PRWA} instances for which the lower bound provided in Proposition \ref{prop:obj_weights} is %tight 
necessary to prioritize request granting over link usage.
%for binary solutions.
% \end{proposition}

\end{proposition}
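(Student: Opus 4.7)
The plan is to construct, for every $|\requestSet|\geq 1$ and every integer $\maxLPpairLength\geq 2$, a \PRWA{} instance on which the prioritization condition of Definition~\ref{def:prioritization} is violated whenever $\frac{\beta}{\alpha}\leq |\requestSet|(\maxLPpairLength-2)+2$. This shows that strict inequality in \eqref{eq:obj_coef_ratio_suff_lb} is actually needed, and therefore that the lower bound of Proposition~\ref{prop:obj_weights} cannot be weakened.

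First I would build the instance as the disjoint union of $|\requestSet|$ identical gadgets, one per request $\requestInd\in\requestSet$. Gadget $\requestInd$ has a source $s^\requestInd$, a destination $t^\requestInd$, three parallel arcs from $s^\requestInd$ to $t^\requestInd$, and a vertex-disjoint simple $s^\requestInd$--$t^\requestInd$ path of $\maxLPpairLength-1$ arcs. The set $\workingSet^\requestInd$ contains a \emph{short} working lightpath using the first parallel arc and a \emph{long} working lightpath using the $(\maxLPpairLength-1)$-arc path, while $\protectionSet^\requestInd$ contains two length-$1$ protection lightpaths on the remaining two parallel arcs. I would assign a distinct wavelength to every lightpath, so that the conflict sets $\conflictSet_1,\ldots,\conflictSet_4$ are all empty. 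By construction, $\maxLPpairLength$ for this instance is exactly $\maxLPpairLength$, as the longest working lightpath has length $\maxLPpairLength-1$ and the longest protection lightpath has length~$1$ in every gadget.

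Next I would exhibit the pair of feasible solutions that witnesses tightness. Take $(\xsol,\ysol)$ to grant every request using its long working lightpath together with one of its length-$1$ protection lightpaths, giving $\objValGranted=|\requestSet|$ and $\objValLink=|\requestSet|\,\maxLPpairLength$. Take $(\xsolalt,\ysolalt)$ to grant exactly $|\requestSet|-1$ of the requests, each via its short working lightpath and the other length-$1$ protection arc, giving $\objValaltGranted=|\requestSet|-1$ and $\objValaltLink=2(|\requestSet|-1)$. Feasibility of both in \eqref{eq:w_p_equality}--\eqref{eq:domain} follows because working and protection within each granted request use distinct parallel arcs, and the gadgets are pairwise link-disjoint, so no inter-request constraint is active.

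Finally I would substitute these solutions into Definition~\ref{def:prioritization}: the requirement $\alpha\,\objValLink-\beta\,\objValGranted<\alpha\,\objValaltLink-\beta\,\objValaltGranted$ simplifies to $\beta/\alpha>|\requestSet|\,\maxLPpairLength-2(|\requestSet|-1)=|\requestSet|(\maxLPpairLength-2)+2$, matching \eqref{eq:obj_coef_ratio_suff_lb} with equality. Hence prioritization fails on this instance for any $\alpha,\beta>0$ with $\beta/\alpha\leq|\requestSet|(\maxLPpairLength-2)+2$, which is exactly the claim. The only step requiring care is to check that $|\requestSet|\,\maxLPpairLength$ and $2(|\requestSet|-1)$ are genuinely the extremal link-usage values under their respective granted counts, so that no alternative pair of feasible solutions could permit a smaller prioritizing ratio; this is immediate from the gadget, since each granted request contributes link usage $2$ (short working, either protection) or $\maxLPpairLength$ (long working, either protection), with nothing in between.
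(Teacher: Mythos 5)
Your proof is correct, but it takes a genuinely more general route than the paper's. The paper's own proof uses the smallest possible witness: a single request ($|\requestSet|=1$) with one working path of length $\len_a$ and one protection path of length $\len_b$, compares the solution granting that request against the all-zero solution, and reads off the necessary condition $\beta/\alpha > \len_a+\len_b = \maxLPpairLength$, which coincides with the bound of Proposition~\ref{prop:obj_weights} when $|\requestSet|=1$. Your gadget family instead realizes \emph{every} parameter pair $(|\requestSet|,\maxLPpairLength)$ with $\maxLPpairLength\geq 2$, and the witnessing pair of solutions (all requests granted via long working paths versus $|\requestSet|-1$ granted via short ones) forces exactly $\beta/\alpha > |\requestSet|(\maxLPpairLength-2)+2$. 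This buys a strictly stronger conclusion: it shows the multiplicative dependence on $|\requestSet|$ in the bound cannot be removed, which the paper's single-request example does not address, at the cost of a more elaborate construction (parallel arcs, per-lightpath wavelengths to empty the conflict sets — both of which are legitimate under the paper's multigraph model). One small remark: your closing paragraph about verifying that $|\requestSet|\maxLPpairLength$ and $2(|\requestSet|-1)$ are extremal link usages is unnecessary for the necessity claim — Definition~\ref{def:prioritization} quantifies over all pairs of feasible solutions, so exhibiting a single pair that violates $\objVal < \objValalt$ whenever $\beta/\alpha \leq |\requestSet|(\maxLPpairLength-2)+2$ already shows prioritization fails; no appeal to the $\fgiven_{\alpha}^{\text{max}}$/$\fgiven_{\alpha}^{\text{min}}$ reformulation in \eqref{eq:prioritization} is needed.
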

\begin{proof}
See \ref{OS:proof:prop_obj_weights_tightLB}. \hfill %\Halmos
\end{proof}

\newcommand{\conflictSetAug}{\conflictSet^{\text{Aug}}}
\newcommand{\protectionSubset}{\tilde{\protectionSet}}
\newcommand{\workingSubset}{\tilde{\workingSet}}
\newcommand{\protectionSubseti}{\bar{\protectionSet}}
\newcommand{\protectionIndi}{\protectionInd^{\prime}}
\newcommand{\workingIndi}{\workingInd^{\prime}}

%{\cemph

\subsection{Strengthened Conflict Constraints}
%DEFINE $\protectionSubset^{\requestInd, \workingInd}$ HERE, AND THEN CLEAN UP THE FOLLOWING CONFLICT SET..........SIMILARLY FOR THE OTHER CONFLICT SET DEFS.......ACTUALLY, ONLY DEFINE SUCH SETS AND DO NOT DEFINE NEW CONFLICT TUPLES; SIMPLY WRITE THE CONSTRAINTS FOR ALL REQ OR FOR ALL e, lambda....

We can strengthen the set of conflict constraints in  \eqref{eq:conflicts1}--\eqref{eq:conflicts4} by identifying larger groups of variables that are mutually in conflict, i.e., groups in which at most one variable can take value one.
To this end, we first construct the set $\protectionSubseti^{\requestInd, \workingInd}$ of all protection lightpaths for request $\requestInd$ that has at least one common link with working lightpath $\workingInd$, as an extension of the lightpath pairs defined in $\conflictSet_1$. That is, for every $\workingInd \in \workingSet^{\requestInd}$ and request $ \requestInd \in \requestSet$, we define
\begin{align*}
   \protectionSubseti^{\requestInd, \workingInd} \coloneqq \big\{\protectionInd \in \protectionSet^\requestInd \colon  \linksOfLightpath[\workingInd] \cap \linksOfLightpath[\protectionInd] \neq \varnothing \big\}.
\end{align*}
Next, we define the sets of working and protection lightpaths that contain link $\link$ and wavelength $\wavelength$, for every wavelength $\wavelength \in \wavelengthSet$, link $\link \in \linkSet$, and request $ \requestInd \in \requestSet$, which will extend the lightpath pairs defined in conflict sets  $\conflictSet_2, \conflictSet_3$ and $\conflictSet_4$. 
Namely,
\begin{align*}
   \workingSubset^{\requestInd}_{\link, \wavelength} \coloneqq \big\{ \workingInd \in \workingSet^\requestInd \colon \wavelengthOfLightpath[\workingInd] = \wavelength, \ \link \in \linksOfLightpath[\workingInd] \big\},
\end{align*}
and
\begin{align*}
   \protectionSubset^{\requestInd}_{\link, \wavelength} \coloneqq \big\{\protectionInd \in \protectionSet^\requestInd \colon \wavelengthOfLightpath[\protectionInd] = \wavelength, \ \link \in \linksOfLightpath[\protectionInd] \big\}.
\end{align*}
Using these sets, we can write a strengthened form of our conflict constraints as
\begin{subequations}
\label{m:IP_strong}
\begin{alignat}{3}
& x_{\workingInd}^{\requestInd} \ + \ \sum_{\protectionInd \in \protectionSubseti^{\requestInd, \workingInd}} y_{\protectionInd}^{\requestInd} \ \leq \ 1 \qquad \qquad &&  \requestInd \in \requestSet, \  \workingInd \in \workingSet \label{eq:conflicts_strong1}\\[0.1cm]
& \sum_{\requestInd \in \requestSet} \sum_{\workingInd \in \workingSubset^{\requestInd}_{\link, \wavelength}} x_{\workingInd}^{\requestInd} \ + \ \sum_{\requestInd \in \requestSet} \sum_{\protectionInd \in \protectionSubset^{\requestInd}_{\link, \wavelength} } y_{\protectionInd}^{\requestInd} \ \leq \ 1 \qquad \qquad && \link \in \linkSet, \ \wavelength \in \wavelengthSet. \label{eq:conflicts_strong2} %\\[0.1cm]
%
%& \sum_{\requestInd \in \requestSet} \sum_{\workingInd \in \workingSet^{\requestInd}_{\link, \wavelength}} x_{\workingInd}^{\requestInd} \ \leq \ 1 \qquad \qquad && \link \in \linkSet, \ \wavelength \in \wavelengthSet \label{eq:conflicts_strong3}\\[0.1cm]
%
%& \sum_{\requestInd \in \requestSet} \sum_{\protectionInd \in \protectionSet^{\requestInd}_{\link, \wavelength}} y_{\protectionInd}^{\requestInd} \ \leq \ 1 \qquad \qquad && \link \in \linkSet, \ \wavelength \in \wavelengthSet \label{eq:conflicts_strong4}
%
\end{alignat}
\end{subequations}

In the rest of the paper, we refer to the IP model in \eqref{m:IP} as {\IPbase}, and to that with the conflict constraints in \eqref{eq:conflicts1}--\eqref{eq:conflicts4} being replaced with the ones in \eqref{eq:conflicts_strong1}--\eqref{eq:conflicts_strong2} as {\IPstrong}.

%}

\subsection{Complexity}\label{sec:complexity}
In this section, we prove that the {\PRWA} problem is NP-hard. More specifically, we prove that a special case of our problem is NP-hard by making a reduction from the {\it maximum stable set} problem, which is known to be NP-hard \citep{garey1979computers}. 

Given a graph $ G_s \left( V_s, E_s \right) $, a {\it stable set} is a set $V_s^\prime \subseteq V_s$ of nodes such that no two nodes in $V_s^\prime$ are linked by an edge in $E_s$.
The goal in the maximum stable set problem ({\mss}) is to find a stable set of maximum cardinality in $G_s$.  
The decision version of the problem, which we denote by {\dmss}, is concerned with the existence of a stable set of size at least $k$ in the input graph $G_s$, for some given integer $k \geq 1$.

Now, we define {\PRWAreq} as the special version of the {\PRWA} problem that aims to maximize the number of granted requests only (thus the extension ``{\sc -r}"), i.e., the variant which can be modeled as \eqref{m:IP} using $\alpha = 0$ and $\beta = 1$ in the objective function \eqref{eq:OF}. Let {\DPRWAreq} be the decision version of {\PRWAreq}, which checks whether at least $k$ requests can be granted for some given integer $k \geq 1$.   
In what follows, we show that  {\PRWAreq} is NP-hard, starting with
the polynomial-time verifiability for its decision version.

\begin{lemma}[Verifiability]
\label{lem:np}
% \begin{lemma}[Verifiability]
% \label{lem:np}
{\DPRWAreq} is in NP.
% \end{lemma}

\end{lemma}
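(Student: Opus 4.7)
The plan is to exhibit a certificate that has polynomial size in the description of the {\DPRWAreq} instance, together with a deterministic verification procedure whose running time is polynomial in the input size. By definition of NP, these two ingredients suffice.

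As a certificate for a ``yes'' instance, I would take a binary assignment $(\xsol, \ysol) \in \{0,1\}^{\sum_{\requestInd \in \requestSet}(|\workingSet^\requestInd| + |\protectionSet^\requestInd|)}$ to the IP decision variables. Its size is clearly bounded by the size of the input (which already lists all alternative working and protection lightpaths for each request). The verification algorithm then performs the following checks, in order:
\begin{enumerate}[leftmargin=*]
    \item For every $\requestInd \in \requestSet$, verify constraints \eqref{eq:w_p_equality} and \eqref{eq:w_p_at_most_one}, i.e., that $\sum_{\workingInd \in \workingSet^\requestInd} \xsol_\workingInd^\requestInd = \sum_{\protectionInd \in \protectionSet^\requestInd} \ysol_\protectionInd^\requestInd \leq 1$. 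This requires $\bigO\left(\sum_{\requestInd \in \requestSet}(|\workingSet^\requestInd| + |\protectionSet^\requestInd|)\right)$ operations.
    \item For every triple $(\requestInd, \workingInd, \protectionInd) \in \conflictSet_1$ and every quadruple in $\conflictSet_2, \conflictSet_3, \conflictSet_4$, verify that the corresponding sum of two variables does not exceed one. Since the conflict sets are part of the input (or can be computed in polynomial time by comparing pairs of lightpaths link by link and checking wavelengths), this takes time polynomial in the input size.
    \item Finally, compute $\objFncValGranted = \sum_{\requestInd \in \requestSet}\sum_{\workingInd \in \workingSet^\requestInd}\xsol_\workingInd^\requestInd$ and check that this value is at least $k$.
\end{enumerate}

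If all three checks succeed, the certificate is accepted. Each operation above takes polynomial time in the encoding length of the {\DPRWAreq} instance, and the overall procedure is therefore polynomial. Moreover, the conflict sets $\conflictSet_1, \ldots, \conflictSet_4$ themselves can be constructed in polynomial time by iterating over pairs of lightpaths and comparing their link sets and wavelengths, so even if they are not explicitly provided in the input, this does not affect the overall polynomial bound. This establishes that {\DPRWAreq} admits a polynomial-time verifier and hence lies in NP.

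There is no real obstacle here; the argument is essentially a bookkeeping exercise to confirm that all constraints of the IP in \eqref{m:IP} can be checked in polynomial time given an explicit assignment. The only subtlety worth mentioning is to make it clear that the conflict sets have size polynomial in the input, since they consist of pairs (or triples) of lightpaths, and the collection of lightpaths is part of the input.
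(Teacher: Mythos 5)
Your proposal is correct and follows essentially the same route as the paper's proof: take the binary lightpath assignment as the certificate, verify the assignment/conflict constraints by iterating over the requests and the conflict sets $\conflictSet_1,\ldots,\conflictSet_4$, and count the granted requests, all in time polynomial in the instance size. The only cosmetic difference is the order of the checks (the paper counts granted requests first and terminates early), which does not affect the argument.
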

\begin{proof}
See \ref{OS:proof:lem_np}. \hfill %\Halmos
\end{proof}

%Next, we prove the hardness of the {\PRWAreq} problem.

\begin{theorem}[Complexity]
\label{thm:npcomplete}
% \begin{theorem}[Complexity]
% \label{thm:npcomplete}
	{\PRWAreq} is NP-hard.
% \end{theorem}

\end{theorem}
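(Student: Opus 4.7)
The plan is to prove NP-hardness by a polynomial-time reduction from the decision version of the maximum stable set problem (\dmss), which is classically NP-complete. Given any instance $(G_s = (V_s, E_s), k)$ of \dmss, I would build a \DPRWAreq{} instance in which at least $k$ requests are grantable if and only if $G_s$ contains a stable set of size at least $k$. Combined with Lemma \ref{lem:np}, this yields NP-completeness of \DPRWAreq{}, and hence NP-hardness of \PRWAreq{}.

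The core of the reduction is to put the vertices of $G_s$ in one-to-one correspondence with the requests and engineer the optical network so that two requests can be granted simultaneously precisely when the corresponding vertices are non-adjacent. I would use a single wavelength, introduce a source--destination pair $(s_i, t_i)$ for each vertex $i \in V_s$, and for every edge $e \in E_s$ introduce a distinguished directed link $\ell_e$ between two fresh intermediate nodes. After fixing an arbitrary ordering of the edges incident to each vertex $i$, the unique working lightpath $w_i$ would be a directed path from $s_i$ to $t_i$ that traverses $\ell_e$ for every edge $e$ incident to $i$ in that order, glued together by fresh per-request ``connector'' links. The unique protection lightpath $p_i$ would be a dedicated direct link $(s_i, t_i)$, disjoint from everything else. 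Since each request has exactly one working and one protection option, constraints \eqref{eq:w_p_equality} and \eqref{eq:w_p_at_most_one} collapse each request's decision to a binary grant-or-reject choice.

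By design, $\conflictSet_1$, $\conflictSet_2$, and $\conflictSet_4$ are empty, while $\conflictSet_3$ contains exactly the quadruplets $(r_i, r_j, w_i, w_j)$ with $(i,j) \in E_s$, because $w_i$ and $w_j$ share the link $\ell_e$ corresponding to $e = (i,j)$ and share no other link. Constraint \eqref{eq:conflicts3} thus forbids simultaneous granting of $r_i$ and $r_j$ exactly when $i, j$ are adjacent in $G_s$, so a collection of requests is jointly grantable if and only if the corresponding vertex subset is stable in $G_s$, with matching cardinality. The construction has polynomial size in $|V_s| + |E_s|$, giving a valid Karp reduction. I expect the main point requiring care to be verifying the ``only the intended conflicts arise'' claim: the per-request connector links inside each $w_i$ and the per-request $(s_i, t_i)$ protection link must be set up so that no unintended pair is added to any of the four conflict sets. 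Because the network is allowed to be a multigraph, this is straightforward to enforce by introducing a fresh link identifier for each role in the construction, which finishes the argument.
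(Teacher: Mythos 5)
Your reduction is correct and follows the same overall strategy as the paper: a polynomial-time reduction from {\mss} with one request per vertex, a single wavelength, a unique working and protection lightpath per request (so that constraints \eqref{eq:w_p_equality}--\eqref{eq:w_p_at_most_one} collapse granting to a binary choice), and link-sharing engineered so that two requests are simultaneously grantable exactly when the corresponding vertices are non-adjacent. The difference is in the gadget. The paper, for every edge of $G_s$, forces conflicts of all three cross-request types --- it populates $\conflictSet_2$, $\conflictSet_3$, and $\conflictSet_4$ --- via an iterative re-routing procedure that splices a fresh shared link into each conflicting lightpath pair one tuple at a time. You instead keep every protection lightpath a dedicated fresh link (so $\conflictSet_1 = \conflictSet_2 = \conflictSet_4 = \varnothing$) and realize only the working--working conflicts by threading $w_i$ through a distinguished link $\ell_e$ for each incident edge $e$; since one conflict per adjacent pair already blocks joint granting, this suffices for both directions of the equivalence. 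Your construction is arguably cleaner --- it is a single-pass, per-edge construction with an immediately verifiable conflict inventory --- while the paper's version illustrates that all four conflict constraint families can be made to carry the hardness. Both yield instances of size $\bigO(|V_s| + |E_s|)$ and, combined with Lemma \ref{lem:np}, give NP-completeness of {\DPRWAreq} and hence NP-hardness of {\PRWAreq}. The one point you rightly flag --- that connector links and the per-request protection links introduce no unintended tuples into any conflict set --- is handled by your use of fresh (possibly parallel) links, which the problem's multigraph model permits.
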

\begin{proof}
See  \ref{OS:proof:thm_npcomplete}. \hfill %\Halmos
\end{proof}

As %mentioned before, {\PRWAreq} is equivalent to {\PRWA} if we set the objective coefficient parameters $\alpha = 0$ and $\beta = 1$, which makes 
{\PRWA} generalizes {\PRWAreq}, it is at least as hard, which yields the desired result.
%Therefore, we have the following corollary to Theorem \ref{thm:npcomplete}.

\begin{corollary}
\label{cor:nphard} 
{\PRWA} is NP-hard.
\end{corollary}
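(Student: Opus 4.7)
The plan is to give a straightforward polynomial-time reduction from {\PRWAreq}, which Theorem \ref{thm:npcomplete} has just shown to be NP-hard, to {\PRWA}; once this reduction is in place, NP-hardness of {\PRWA} follows by standard argument.

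Given an arbitrary {\PRWAreq} instance, I would construct a {\PRWA} instance on the same directed graph $G=(V,\linkSet)$, with identical request set $\requestSet$, wavelength set $\wavelengthSet$, and precomputed working/protection lightpath collections $\workingSet^\requestInd$ and $\protectionSet^\requestInd$ for each $\requestInd \in \requestSet$. The only element left to specify is the pair of objective weights $(\alpha,\beta)$ in \eqref{eq:OF}. I would choose $\alpha = 1$ and $\beta = \betaBase = |\requestSet|(\maxLPpairLength - 2) + 3$, where $\maxLPpairLength$ is as defined in Proposition \ref{prop:obj_weights}. Computing $\maxLPpairLength$ requires a single pass through the precomputed lightpath lengths, and the resulting $\betaBase$ is polynomially bounded in the input size, so the reduction itself runs in polynomial time and produces a polynomial-size output.

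To establish correctness, I would appeal to Proposition \ref{prop:obj_weights}: with these weights, whenever two feasible solutions $(\xsol,\ysol)$ and $(\xsolalt,\ysolalt)$ satisfy $\objValGranted > \objValaltGranted$, we have $\objVal < \objValalt$. Hence any optimal solution $(x^*,y^*)$ of the constructed {\PRWA} instance must grant the maximum feasible number of requests, since otherwise switching to a solution with strictly more granted requests would strictly decrease the objective and contradict optimality. Reading off $\obj_\beta(x^*)$ thus yields the optimum of the original {\PRWAreq} instance, and in particular also answers the decision variant {\DPRWAreq}. Consequently, any algorithm for {\PRWA} solves {\PRWAreq} with only polynomial overhead, so {\PRWA} inherits NP-hardness from {\PRWAreq}.

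I do not foresee a real obstacle, since the reduction amounts to embedding {\PRWAreq} into {\PRWA} via a carefully chosen weight pair; the prioritization machinery developed in Section \ref{subsec:prioritization} does all the heavy lifting. The only subtlety worth flagging is confirming that the encoding length of $\betaBase$ stays polynomial in the input size, which is immediate because $\betaBase$ is polynomially bounded in $|\requestSet|$ and $\maxLPpairLength$. With that verified, the corollary is established.
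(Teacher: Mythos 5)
Your proof is correct, but it takes a more explicit route than the paper does. The paper disposes of the corollary in one line: {\PRWAreq} is by definition the special case of {\PRWA} obtained by setting $\alpha=0$, $\beta=1$ in \eqref{eq:OF}, so {\PRWA} generalizes {\PRWAreq} and is therefore at least as hard, with Theorem \ref{thm:npcomplete} doing all the work. You instead construct a genuine polynomial reduction from {\PRWAreq} to {\PRWA} with \emph{positive} weights $\alpha=1$, $\beta=\betaBase$, invoking Proposition \ref{prop:obj_weights} to guarantee that any optimal solution of the constructed instance grants the maximum number of requests, from which the {\PRWAreq} optimum can be read off. Both arguments are sound. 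What your version buys is that it sidesteps a small wrinkle in the paper's phrasing: the IP formulation stipulates that $\alpha$ and $\beta$ are \emph{positive} constants, so $\alpha=0$ is, strictly speaking, a degenerate boundary case rather than an instance of {\PRWA} proper; your reduction shows the hardness persists for the bona fide two-objective problem with prioritization. The cost is that you need the full prioritization machinery of Section \ref{subsec:prioritization} and must verify (as you do) that $\betaBase$ has polynomial encoding length, whereas the paper's specialization argument needs nothing beyond the theorem itself. One minor point of rigor: your reduction is an optimization-to-optimization (Turing) reduction; if one insists on Karp reductions between the decision versions, you would additionally note that granting at least $k$ requests is equivalent, under the prioritization condition, to achieving an objective value below an explicitly computable threshold, which is routine.
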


\section{QUBO Formulation and Solution Method}
\label{sec:qubo}

%SAY THAT IT IS ALSO POSSIBLE TO EMBED QUADRATIC OBJECTIVE AS WELL...
In this section, we present our proposed modeling and solution approach for the {\PRWA} problem.
In Section \ref{subsec:qubo}, we present a QUBO %Quadratic Unconstrained Binary Optimization (QUBO) 
model via a transformation from {\IPbase} obtained by dualizing its constraints, %of the IP, namely \eqref{eq:w_p_equality}--\eqref{eq:conflicts4}, 
i.e., adding them as a penalty term to the objective function. %in \eqref{eq:OF}.
%Our proposed QUBO model is exact, i.e., an optimal solution to the QUBO model also constitutes an optimal solution to the IP formulation, thus to the {\PRWA} problem.
We also explain how to carefully choose the penalty parameters
%%In Section \ref{subsec:qubo}, we first explain how to construct the penalty terms that would be positive valued at binary solutions infeasible to the IP, and then show how to carefully choose the penalty coefficients 
to achieve the exactness of the QUBO model.
Then, in Section \ref{subsec:da}, we overview the Digital Annealer technology and its operating principles.

\subsection{Transformation to QUBO}
\label{subsec:qubo}
As the first step of obtaining an exact QUBO formulation, we dualize the constraints of our IP formulation given in \eqref{m:IP}, {\IPbase}, in such a way that any infeasible solution to it, i.e., any constraint violation, yields a strictly positive penalty term in the objective function of our QUBO model.
This is easy to achieve for equality constraints; any linear equality constraint can be transformed into a penalty term by simply taking the square of the difference of its left and right-hand sides, so that any constraint violation translates into a positive penalty value, and hence can be avoided through the minimization of the objective.
Therefore, for our only set of equality constraints \eqref{eq:w_p_equality}, the corresponding penalty term includes for each $\requestInd \in \requestSet$ the following squared violation expression:
$$ \left(\sum_{\workingInd \in \workingSet^\requestInd} x_\workingInd^\requestInd \ - \ \sum_{\protectionInd \in \protectionSet^\requestInd} y_\protectionInd^\requestInd \right)^2,$$ which amounts to a positive value when more working lightpaths than protection lightpaths are selected for the request $r$, or vice versa.

In case of inequality constraints, however, more custom-tailored approaches are needed, because violations occur in one direction only. In order to transform the inequality constraints in \eqref{eq:w_p_at_most_one}--\eqref{eq:conflicts4} into penalty terms, we first reformulate them as quadratic \emph{equality} constraints in \eqref{eqs:equalities}. 
\begin{subequations}
\label{eqs:equalities}
\begin{alignat}{2}
& \left(\sum_{\workingInd \in \workingSet^\requestInd} x_\workingInd^\requestInd \right) \left( \sum_{\workingInd \in \workingSet^\requestInd} x_\workingInd^\requestInd  - 1 \right)  \ = \ 0 \quad \qquad &&\requestInd \in \requestSet \label{eq:w_p_at_most_one_quad}\\[0.1cm]
& x_{\workingInd}^{\requestInd} \ y_{\protectionInd}^{\requestInd} \ = \ 0 \qquad \qquad && (\requestInd, \workingInd,\protectionInd) \in \conflictSet_1 \label{eq:conflicts1_quad}\\[0.1cm]
& x_{\workingInd}^{\requestInd_1} \ y_{\protectionInd}^{\requestInd_2} \ = \ 0 \qquad \qquad &&  (\requestInd_1, \requestInd_2, \workingInd, \protectionInd) \in \conflictSet_{2} \label{eq:conflicts2_quad}\\[0.1cm]
& x_{\workingInd_1}^{\requestInd_1} \ x_{\workingInd_2}^{\requestInd_2} \ = \ 0 \qquad \qquad &&  (\requestInd_1, \requestInd_2, \workingInd_1, \workingInd_2) \in \conflictSet_{3}  \label{eq:conflicts3_quad}\\[0.1cm]
& y_{\protectionInd_1}^{\requestInd_1} \ y_{\protectionInd_2}^{\requestInd_2} \ = \ 0 \qquad \qquad &&  (\requestInd_1, \requestInd_2, \protectionInd_1, \protectionInd_2) \in \conflictSet_{4}  \label{eq:conflicts4_quad}
\end{alignat}
\end{subequations}

Constraint set \eqref{eq:w_p_at_most_one_quad} is the quadratic equivalent of \eqref{eq:w_p_at_most_one} ensuring that at most one lightpath is selected per request. As the decision variables are binary, the left-hand side of \eqref{eq:w_p_at_most_one}, i.e., the expression denoting the total number of working lightpaths assigned to request $r$, can take value either zero or one, in which case the left-hand side of \eqref{eq:w_p_at_most_one_quad} becomes zero. So, \eqref{eq:w_p_at_most_one_quad} holds only when the corresponding original constraint  \eqref{eq:w_p_at_most_one} is satisfied, otherwise, i.e., when $\sum_{w\in W^\requestInd} x_w^\requestInd \geq 2$, the left-hand side of \eqref{eq:w_p_at_most_one_quad} takes a strictly positive value.
Therefore, the left-hand side of \eqref{eq:w_p_at_most_one_quad} can be used as a penalty term for violations of constraints \eqref{eq:w_p_at_most_one}.
Similarly, constraints \eqref{eq:conflicts1}--\eqref{eq:conflicts4}, each of which ensuring that the two involved lightpaths cannot be both selected due to a conflict, are violated only when both variables on the left-hand side take value one; all other configurations of the two binary variables are feasible.
The quadratic constraints \eqref{eq:conflicts1_quad}--\eqref{eq:conflicts4_quad} take advantage of the fact that all feasible configurations involve at least one variable having value zero, and force the product of the two to be zero. 
So, when the associated constraints are violated, the left-hand sides of \eqref{eq:conflicts1_quad}--\eqref{eq:conflicts4_quad} take strictly positive values, namely value one, thus serve as penalty terms to be added to the objective function of our QUBO model.
Note that the magnitude of violation that an infeasible binary solution creates in any one of the constraints \eqref{eq:w_p_equality}--\eqref{eq:conflicts4} is at least one, which has a useful role in rendering our QUBO formulation exact, as we will see when we specify possible values of the penalty coefficient in the sequel.

We present our QUBO formulation for {\PRWA} in \eqref{m:QUBO}.
\begin{subequations}
\label{m:QUBO}
\begin{alignat}{2}
\min \qquad & \alpha \ \sum_{\requestInd\in \requestSet} \left( \sum_{\workingInd \in \workingSet^\requestInd} \length_\workingInd^\requestInd x_\workingInd^{\requestInd} + \sum_{\protectionInd \in \protectionSet^\requestInd} \length_\protectionInd^\requestInd y_\protectionInd^{\requestInd}  \right) \ - \ \beta \ \left(\sum_{\requestInd\in \requestSet}\sum_{\workingInd \in \workingSet^\requestInd} x_\workingInd^\requestInd\right) \nonumber \\[0.1cm]
& + \ \penaltyCoef \sum_{\requestInd\in \requestSet}\left(\sum_{\workingInd \in \workingSet^\requestInd} x_\workingInd^\requestInd - \sum_{\protectionInd \in \protectionSet^\requestInd} y_\protectionInd^\requestInd \right)^2 \ + \ \penaltyCoef \ \sum_{\requestInd\in \requestSet}\left(\sum_{\workingInd \in \workingSet^\requestInd} x_\workingInd^\requestInd -1 \right) \left(\sum_{\workingInd \in \workingSet^\requestInd} x_\workingInd^\requestInd \right)  \nonumber \\[0.15cm]
& + \  \penaltyCoef  \sum_{(\requestInd, \workingInd,\protectionInd) \in \conflictSet_1} x_{\workingInd}^{\requestInd} \ y_{\protectionInd}^{\requestInd} \ \ \ + \ \ \ \penaltyCoef  \sum_{(\requestInd_1, \requestInd_2, \workingInd, \protectionInd) \in \conflictSet_{2}} x_{\workingInd}^{\requestInd_1} \ y_{\protectionInd}^{\requestInd_2}  \nonumber \\[0.15cm]
& + \ \penaltyCoef \sum_{(\requestInd_1, \requestInd_2, \workingInd_1, \workingInd_2) \in \conflictSet_{3}} x_{\workingInd_1}^{\requestInd_1} \ x_{\workingInd_2}^{\requestInd_2} \ \ \ + \ \ \ \penaltyCoef \sum_{(\requestInd_1, \requestInd_2, \protectionInd_1, \protectionInd_2) \in \conflictSet_{4} } y_{\protectionInd_1}^{\requestInd_1} \ y_{\protectionInd_2}^{\requestInd_2}   \label{eq:OF_qubo} \\[0.25cm]
\text{s.t.} \qquad & x_\workingInd^\requestInd, \ y_{\protectionInd}^\requestInd \in \{0,1\}  \qquad \qquad \requestInd \in \requestSet, \ \workingInd \in \workingSet^\requestInd, \ \protectionInd \in \protectionSet^\requestInd , \label{eq:domain_qubo}
\end{alignat}
\end{subequations}
\noindent where $ \penaltyCoef > 0$ is the penalty coefficient for the dualized constraints. We note that different penalty coefficients can be used for different terms, however, we choose them to be all the same, $\penaltyCoef$, to simplify our derivation of a valid lower bound for it.

{\cemph
We note that {\IPstrong} can also be transformed into a QUBO model, in which case the strengthened set of conflict constraints in \eqref{eq:conflicts_strong1}--\eqref{eq:conflicts_strong2} would simply be dualized in the same manner constraints in \eqref{eq:w_p_at_most_one} are dualized (see quadratic equalities in \eqref{eq:w_p_at_most_one_quad}).
This yields sums of bilinear penalty terms in the objective, similar to those corresponding to the conflict constraints in \eqref{eq:conflicts1}--\eqref{eq:conflicts4}.
Thus, the two QUBO models can be made equivalent by customizing the penalty coefficient values for the associated terms.

Next, we investigate conditions ensuring that an optimal solution to the QUBO model is also optimal for the {\PRWA} problem.
}
%{\cemph ADD A BRIEF DISCUSSION ON HOW THE TWO QUBO MODELS WOULD BE EQUIVALENT IF WE WERE TO CUSTOMIZE RHO COEFS FOR EACH TERM SEPARATELY!!!!}

\begin{definition}[Exactness]
\newcommand{\model}{\mathcal{M}}
\newcommand{\prob}{\mathcal{P}}
A model $\model$ for a problem $\prob$ is \emph{exact} if any optimal solution to $\model$ is feasible and optimal for $\prob$.
\end{definition}

By construction, {\IPbase} (and also {\IPstrong}) is an exact model for the {\PRWA} problem.
On the other hand, for the QUBO formulation to be exact, the penalty coefficient $ \penaltyCoef $ should be selected ``sufficiently large".
Since high valued parameters might lead to serious numerical issues, smaller ``safe" values are desirable. In that regard, we provide a lower bound for $\penaltyCoef$ that is sufficient to guarantee that the QUBO model is exact.  %To the best of our knowledge, such bounds have not been often derived in the literature. 

\begin{proposition}[QUBO penalty selection]
\label{prop:penalty_coef}
% \begin{proposition}[QUBO penalty selection]
% \label{prop:penalty_coef}
When 
\begin{align}
\penaltyCoef 
\ > \
\beta  (|\requestSet| \ + \ 1) 
\ - \ 
\alpha \left( 1 \ + \  \sum_{\requestInd \in \requestSet} \left(\length^{\requestInd}_{w_{\min}}  + \ \length^{\requestInd}_{p_{\min}} \right) \right),  \label{eq:penalty_coef_lb}  
\end{align}
\eqref{m:QUBO} is an exact QUBO model for the {\PRWA} problem,
%; that is, any optimal solution of the QUBO model is feasible and optimal for the IP model.
where 
$\length^{\requestInd}_{w_{\min}} 
= \ 
\min_{\workingInd \in \workingSet^{\requestInd}} \left \{\length^\requestInd_\workingInd \right \} $
and 
$\length^{\requestInd}_{p_{\min}}
= \ 
\min_{\protectionInd \in \protectionSet^{\requestInd}} \left \{\length^\requestInd_\protectionInd \right \}$.
% \end{proposition}

\end{proposition}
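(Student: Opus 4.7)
The plan is to argue by contradiction, using the trivial all-zeros vector as the witness feasible solution. I would suppose that some infeasible $(\xsol,\ysol)$ is QUBO-optimal and compare against $(\xsolalt,\ysolalt)=(0,0)$, whose QUBO objective is $0$. The task then reduces to showing
\[
\alpha\, \obj_\alpha(\xsol,\ysol) - \beta\, \obj_\beta(\xsol) + \penaltyCoef\, V(\xsol,\ysol) > 0,
\]
where $V(\xsol,\ysol)$ denotes the aggregate value of all penalty summands of \eqref{eq:OF_qubo} at $(\xsol,\ysol)$. I would rely on a structural observation: each penalty summand is either a squared integer expression or a product of two binary variables, so on $\{0,1\}$-inputs it assumes only non-negative integer values and equals $0$ exactly when the associated \IPbase{} constraint holds; hence $V(\xsol,\ysol)\ge 1$ for every infeasible $(\xsol,\ysol)$, and the penalty contribution $\penaltyCoef\, V(\xsol,\ysol)$ is at least $\penaltyCoef$.

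The substantive step is to derive a uniform upper bound on the ``gain'' $\beta\, \obj_\beta(\xsol) - \alpha\, \obj_\alpha(\xsol,\ysol)$ over infeasible binary assignments. Setting $a_\requestInd=\sum_{\workingInd\in\workingSet^\requestInd}\xsol^\requestInd_\workingInd$ and $b_\requestInd=\sum_{\protectionInd\in\protectionSet^\requestInd}\ysol^\requestInd_\protectionInd$, the elementary integer inequality $a_\requestInd(a_\requestInd-1)\ge 2(a_\requestInd-1)^+$ applied to the \eqref{eq:w_p_at_most_one_quad}-penalty $\sum_\requestInd a_\requestInd(a_\requestInd-1)\le V(\xsol,\ysol)$ forces $\obj_\beta(\xsol)=\sum_\requestInd a_\requestInd\le |\requestSet|+V(\xsol,\ysol)/2$, and $\obj_\alpha(\xsol,\ysol)$ is bounded below by $\sum_\requestInd\bigl(a_\requestInd\length^{\requestInd}_{w_{\min}}+b_\requestInd\length^{\requestInd}_{p_{\min}}\bigr)$. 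A case analysis over which of the five violation families---\eqref{eq:w_p_equality_quad}, \eqref{eq:w_p_at_most_one_quad}, or any of the four conflict types \eqref{eq:conflicts1_quad}--\eqref{eq:conflicts4_quad}---realizes the minimum violation $V=1$ (the binding case, since larger $V$ only helps), together with the observation that any unit of excess $(a_\requestInd-1)^+$ costs an additional $\length^\requestInd_{w_{\min}}\ge 1$ in $\obj_\alpha$, would then deliver the uniform estimate
\[
\beta\, \obj_\beta(\xsol) - \alpha\, \obj_\alpha(\xsol,\ysol) \;\le\; \beta(|\requestSet|+1) - \alpha\Big(1+\sum_{\requestInd\in\requestSet}\bigl(\length^{\requestInd}_{w_{\min}}+\length^{\requestInd}_{p_{\min}}\bigr)\Big).
\]
Combining this with $\penaltyCoef\, V(\xsol,\ysol) \ge \penaltyCoef$ and invoking the hypothesis~\eqref{eq:penalty_coef_lb} then yields the required strict positivity, contradicting the assumed optimality of $(\xsol,\ysol)$.

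The hard part will be the per-case bookkeeping underpinning the gain bound. Different violation types produce qualitatively different worst-case configurations: the absence of a protection lightpath for an otherwise-granted request (one violation of \eqref{eq:w_p_equality_quad}) saves up to $\alpha\length^{\requestInd^*}_{p_{\min}}$ on the $\obj_\alpha$ lower bound, an illegal second working lightpath (one violation of \eqref{eq:w_p_at_most_one_quad}) contributes $+\beta$ to $\obj_\beta$ at the cost of at least $\alpha\length^{\requestInd^*}_{w_{\min}}$, and each conflict family allows a full $|\requestSet|$-granting with only one unit of penalty. Verifying that the single right-hand side of \eqref{eq:penalty_coef_lb}---whose ``$\beta-\alpha$'' cushion is exactly calibrated to the largest such deficit---simultaneously dominates all these worst cases is where I expect the technical effort to sit.
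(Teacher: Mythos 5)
Your overall architecture (compare an infeasible candidate against the all-zeros solution, whose QUBO objective is $0=F_{UB}$, and show the penalty outweighs the ``gain'') is the same separation idea the paper uses, but the pivotal step is broken. The ``uniform estimate''
$\beta\,\obj_{\beta}(\xsol)-\alpha\,\obj_{\alpha}(\xsol,\ysol)\le \beta(|\requestSet|+1)-\alpha\bigl(1+\sum_{\requestInd}(\length^{\requestInd}_{w_{\min}}+\length^{\requestInd}_{p_{\min}})\bigr)$
is false over the set of all infeasible binary assignments. Take any request $\requestInd^{*}$ and set $x^{\requestInd^{*}}_{\workingInd}=1$ for every $\workingInd\in\workingSet^{\requestInd^{*}}$ and all other variables to zero: the gain is $\sum_{\workingInd\in\workingSet^{\requestInd^{*}}}(\beta-\alpha \length^{\requestInd^{*}}_{\workingInd})$, and since the prioritization condition forces $\beta/\alpha > |\requestSet|(\maxLPpairLength-2)+2 \ge \maxLPpairLength > \length^{\requestInd^{*}}_{\workingInd}$, every summand is strictly positive, so the gain grows linearly in $|\workingSet^{\requestInd^{*}}|$ and eventually exceeds your right-hand side, which contains only $|\requestSet|+1$ ``units of $\beta$''. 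Such solutions are only controlled because their penalty $V$ grows quadratically (through $a_{\requestInd^{*}}(a_{\requestInd^{*}}-1)$ and $(a_{\requestInd^{*}}-b_{\requestInd^{*}})^2$); the moment you ``combine with $\penaltyCoef V\ge \penaltyCoef$'' you discard exactly the term that beats them. Relatedly, the reduction to the case $V=1$ via ``larger $V$ only helps'' is not an observation but the crux of the whole proposition: it is equivalent to showing that each additional unit of violation can buy at most $\beta-\alpha<\penaltyCoef$ of additional gain, and you assert it without proof. Your inequality $\obj_{\beta}(\xsol)\le |\requestSet|+V/2$ is the right kind of tool, but it must be kept coupled to the $\penaltyCoef V$ term (i.e., compare $\penaltyCoef V$ against a $V$-dependent bound on the gain), not collapsed into a $V$-free estimate.

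For contrast, the paper avoids a case analysis over violation families entirely: it bounds the IP objective over \emph{feasible} solutions by $F_{UB}=0$ (the empty solution plus prioritization) and $F_{LB}\ge \alpha\sum_{\requestInd}(\length^{\requestInd}_{w_{\min}}+\length^{\requestInd}_{p_{\min}})-\beta|\requestSet|$ (grant everything with shortest lightpaths), then argues marginally that each unit of violation can lower $\obj$ below $F_{LB}$ by at most $\beta-\alpha$, so that $\penaltyCoef> F_{UB}-F_{LB}+\beta-\alpha$ pushes every infeasible solution's QUBO value strictly above $F_{UB}$. If you want to salvage your route, you need to prove that marginal statement (or an equivalent coupling between the gain and $V$) rather than the false uniform bound; the per-family bookkeeping you flag as ``the hard part'' cannot be confined to $V=1$ without it.
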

\begin{proof}
See  \ref{OS:proof:prop_penalty_coef}. \hfill %\Halmos
\end{proof}

It is important to note that the condition in \eqref{eq:penalty_coef_lb} not only guarantees the exactness of the QUBO model, but also ensures that any infeasible solution for the problem is inferior to the feasible ones. 
We chose to impose this stronger requirement in deriving the lower bound on the penalty coefficient in order to establish a clear dominance relationship between the classes of feasible and infeasible solutions, which we believe leads to a conceptually better QUBO model. 
We also note that the resulting lower bound is not tight, as far as the original definition of exactness is concerned. If $\rho^{\text{Base}}$ is a penalty coefficient abiding \eqref{eq:penalty_coef_lb}, then similar to the objective weight parameter discussion in the IP case, we can actually design an optimization model to obtain the smallest possible penalty coefficient value, $\rho^{\text{Tight}}$. However, the resulting model would be much more complex (e.g., a 0-1 quadratic fractional programming model).

%Proposition \ref{prop:penalty_coef} provides a condition for the penalty coefficient $\penaltyCoef$ that is sufficient to render the QUBO formulation in \eqref{m:QUBO} exact, i.e., any $\penaltyCoef$ value strictly above the suggested lower bound is guaranteed to deliver exactness.

%Next, we show that there exist {\PRWA} instances for which this sufficient condition is necessary, too. %; that is, there are cases where the suggested lower bound is tight.

%\input{QUBO_subfiles/prop_penalty_tight_ex}

%{\cred [Oylum: TODO] Comment: Instance based, can be further improved. }
%We note that while there exist instances for which the condition provided in \eqref{eq:penalty_coef_lb} needs to be necessarily satisfied as shown in Proposition \ref{prop:penalty_coef_tightLB}, for other cases, the suggested lower bound can be improved by making use of the specifics of a particular {\PRWA} instance or a class of instances, which may help improve the performance of IP and QUBO solvers.
%{\cpink what else to say about this???}

Given a QUBO model, we can optimally solve it using the state-of-the-art solvers like GUROBI \citep{gurobi} and CPLEX \citep{cplex}. 
However, if the model is originally constrained and linear, as it is in our case, a more favorable approach would be to use these solvers to solve the IP formulations, in which they are particularly successful.
The main limitation of the IP solvers is that their performance deteriorates as the number of variables and constraints increases, with an exponential rise in solution times typically, as a result of which they likely fail to deliver an optimal or good-quality solution for realistic problem sizes in a short amount of time.
For problems suitable to be formulated as a QUBO model, a promising alternative to the state of the art mentioned above is the {\it Digital Annealer} technology, which in theory is not affected by the increasing number of variables and constraints, and demonstrates a robust level of performance across instances having different sizes, as long as the number of variables does not exceed the allowed variable capacity. 
Next, we provide some information on this technology.

\begin{comment}
what to add to this section:

WE HAVE A QUBO AT HAND, WE CAN SOLVE THIS USING CPLEX GUROBI ETC. AS A PROMISING ALTERNATIVE, WE USE DA BLAH BLAH

IN THEORY, DA WONT BE AFFECTED BY INCREASING NUMBER OF CONSTRAINTS... EMPHASIZE THIS ADVANTAGE!!! AND JUSTIFY THAT WE THEREFORE PREFER IT AND IT IS SUITABLE FOR PRWA
MENTION THE DISADVANTAGES OF CPLEX GUROBI THAT THEY CANNOT SCALE WHILE DA CAN

SIMULATED ANNEALING'IN NASIL CALISTIGINI DA BIR IKI CUMLE ILE ANLAT
\end{comment}

\subsection{The Digital Annealer}
\label{subsec:da}

The Digital Annealer (DA) is a quantum-inspired computer architecture designed to derive solutions for combinatorial optimization problems formulated as a QUBO model. % \cite{da}.
It consolidates the merits of both quantum and general-purpose computers, and takes advantage of the massive parallelization that its hardware allows  \citep{da, sao2019application}. 
The first generation of DA is capable of solving problems with up to 1024 variables, while this number has increased to 8192 in the second generation \citep{daWeb}.%, which is the one that we utilize in this study. 

The algorithm of DA is based on simulated annealing.
Simulated annealing (SA) is a probabilistic method for finding solutions to combinatorial optimization problems that aim to minimize some cost function, by making an analogy to the physical process of {\it annealing} whereby a heated material is slowly cooled until it reaches a state of minimum energy \citep{kirkpatrick1983optimization, bertsimas1993simulated}.
%, called the ground state . 
%on an analogy between the minimization of a cost function of a combinatorial optimization problem and the slow cooling of a material until it reaches the minimum energy state (ground state) \cite{kirkpatrick1983optimization}.
%Starting from an initial solution, 
%Four main ingredients are needed for SA: (1) a concise description of the {\it state} of a system, which corresponds to a representation of a {\it solution} to the problem, (2) definition of a {\it move} operator to make a transition from one state to the other, (3) an objective function, and (4) a cooling schedule for temperatures \cite{kirkpatrick1983optimization}.
%
%Four main ingredients are needed to apply SA: (1) a solution representation (state) (2) a {\it move} operator to define a one-step transition from one solution to the other, (3) an objective function, and (4) a cooling schedule for temperatures \cite{kirkpatrick1983optimization}.
The idea in simulated annealing is to propose a random perturbation to the current solution at each iteration, evaluate the consequent change in the objective function, and decide whether or not to move to the proposed solution. 
If the proposed solution results in a lower objective value, it is always accepted; otherwise, i.e., if it is a ``uphill" move, it is accepted with a probability that is a function of the change in the objective value and the current temperature. %\citep{kirkpatrick1983optimization, rutenbar1989simulated}.
%, with higher temperatures and smaller increases in the objective making the acceptance more likely.
While higher temperatures more likely permit uphill moves to let the algorithm explore a larger region of the objective function and to help escape from local optima, the search intensifies around a narrower area with lower temperatures.  
Under certain conditions, simulated annealing asymptotically converges to a global optimum, yet, it may necessitate infinitely many iterations.
So, in practice, it is very well possible to converge to a local optimum in simulated annealing \citep{kirkpatrick1983optimization, rutenbar1989simulated, glover2006handbook, gendreau2010handbook}.

To apply simulated annealing based algorithms, one needs to define a {\it solution representation} as well as a {\it move} operation to propose a new candidate solution at each iteration \citep{kirkpatrick1983optimization}.
In DA, a solution (to a QUBO problem) is represented with a vector of binary variable values, and the move operation is defined as the {\it flip} of a variable value, i.e., changing the value of a variable from one to zero or vice versa.

While being grounded in simulated annealing, DA's algorithm differs from it in some key aspects. % \citep{da}.
First, it uses a {\it parallel trial} scheme, where it evaluates all possible moves in parallel at each iteration, as opposed to the classical way of considering one random move only.  
When more than one flip is eligible for acceptance, one of them is chosen uniformly at random.
Second, it utilizes a {\it dynamic offset} mechanism to escape from local optima, such that if no flip is accepted in the current iteration, the acceptance probabilities in the subsequent iteration are artificially increased. % \cite{da}.
{\cemph Specifically, when no candidate variable to flip can be found, a positive offset value is added to the objective function, equivalent to multiplying the acceptance probabilities with a coefficient that is a function of the current temperature and the magnitude of the offset.  Otherwise, the offset value is set to zero.}
Third, DA has the {\it parallel tempering} option, also referred to as the {\it replica exchange} method, where multiple independent search processes (replicas) are initiated in parallel with a different temperature each, and states (solutions) are probabilistically exchanged between them.
This way, each replica performs a random walk in the temperature space, helping to avoid being stuck at a local minimum \citep{da, hukushima1996exchange, matsubara2020digital}.
{\cemph
In our computational experiments, we utilize DA in parallel tempering mode.
}

\begin{comment}
% define:
% what a state is
% the energy function and how it maps to physics
% a move and a neighbouring state

\noindent   
WHAT IS SIMULATED ANNEALING ALGORITHM, IN A NUTSHELL \\\
HOW IS DA'S ALGORITHM DIFFERENT FROM SA? \\
WHAT IS MEANT BY FULLY CONNECTED? \\
THAT DA DIFFERS FROM QUANTUM ANNEALERS BY BEING FULLY CONNECTED...\\

%DA stands apart from the quantum annealers .... in the sense that ....

{\cpink OTHER PERKS OF DA MAY COME HERE... PERHAPS IN COMPARISON TO QUANTUM ANNEALERS...}
\end{comment}

\begin{comment}
%The Digital Annealer (DA) is a massively parallel hardware architecture to solve combinatorial optimization problems in QUBO form \cite{matsubara2017ising}. It makes use of Markov Chain Monte Carlo (MCMC) search to minimize the Ising energy, which is defined as follows.
%
\begin{align*}
    H \left( z_1, \ldots, z_N \right) = - \sum_{i, j \in [N]} w_{ij} \ z_i \ z_j \ - \ \sum_{i \in [N]} b_i \ z_i \ + \ c, %\label{eq:ising_energy}
\end{align*}
%
\noindent where $ z_i \in \{0,1\} $ is the state variable, $N$ is the number of state variables (bits) with $[N]$ denoting the set of nonzero integers from $1$ to $N$, $w_{ij}$ and $b_i$ are the weights for the quadratic and linear terms, respectively, and $c$ is a constant.

The binary state variables $Z_i$'s correspond to the parallel bits that DA can process.
The first generation of DA implements 1024 bits, while it increased to 8192 bits in the second generation.
The algorithm of DA is based on simulated annealing, but it differs from it in some major aspects \citep{da}. 
%what is simulated annealing (SA)
Simulated annealing is a probabilistic method for finding REFER TO BERTSIMAS' PAPER ... proposed in \citep{kirkpatrick1983optimization}. 
%how DA differs from SA
Given an input, DA performs multiple passes of its custom annealing algorithm, each of which is referred to as a {\it run}, and the number of runs can be specified by the user.
\end{comment}
\section{Computational Study}
\label{sec:comp_study}

In this section, we present the results of our computational study.
We generated a large suite of \PRWA~instances using known networks from the literature and conducted detailed analysis.
Our experimental setting can be summarized as follows:
\BI%[label={--}]
\I {\bf Solvers.} We used the second generation of DA\footnote[1]{For DA experiments, we used the Digital Annealer environment prepared exclusively for research at the University of Toronto.}  
\citep{matsubara2020digital} and GUROBI 9.0\footnote[2]{For GUROBI experiments, we used a MacOS computer with 3 GHz Intel Core i5 CPU and 16 GB memory.} \citep{gurobi}.
\I {\bf Methods.} While we (1) provided the QUBO formulation to DA, we (2) employed GUROBI in three different ways; 
(i) to directly solve the IP formulations, both {\IPbase} and {\IPstrong},  
(ii) to solve {\IPstrong} via branch-and-cut (B\&C) using the lazy callback feature, and 
(iii) to directly solve the QUBO formulation. 
We note that we sometimes refer to (i) as ``GUROBI as IP solver", and to (iii) as ``GUROBI as QUBO solver". 
In addition, we solve {\PRWA} via the random-search-based heuristic from \citep{ezzahdi2006lerp}, which was also utilized in \citep{azodolmolky2010novel}.
We refer to this heuristic as {\RSheur}.
%We employ DA in parallel tempering mode in our experiments. 
\I {\bf Time limit.} We used three different time limits; 
%60 seconds with reference to the services where fast response times are desired \citep{fawaz2004service, losego2005time}, 
120 seconds, which is approximately the highest run time DA takes for our particular problem,
 %to make an additional performance comparison when a longer run time is allowed for both solvers, 
 600 seconds to compare the longer run-time performance of GUROBI to that of DA, and
 7200 seconds (two hours) to see how well GUROBI can achieve in cases where hours of run times are tolerable.
 %For {\RSheur}, we experimented with different numbers of iterations, corresponding to a range from 2 to 900 seconds.
\I {\bf Experiments.}  We carried out three main groups of analyses; 
(1) performance comparison of the five alternative {\methods}, % using the considered time limits, 
(2) effect of using solutions from DA as an initial solution for GUROBI, as well as a run time analysis for GUROBI to reach DA's performance level, and
(3) sensitivity analysis of DA's performance to values of the penalty coefficient $\penaltyCoef$.
%(2) comparison of DA results to the optimal (or best-known) values, 
%(3) a run time analysis for GUROBI to reach DA's performance level, and 
%(4) sensitivity analysis of model parameters using $\betaTight$ values and various quantities for the penalty coefficient $\penaltyCoef$.
\I {\bf Implementation details.}
First, we implemented a B\&C algorithm using the callback features of GUROBI to pass the conflict constraints as needed. We tried various cut selection and management strategies for user and lazy callbacks, the best of which yielded no better performance than using the lazy callbacks, i.e. by adding a conflict constraint each time a feasible solution violating it is encountered, which, therefore, is the one we utilize in our experiments.
%First, we tested two alternative ways of implementing a B\&C algorithm in GUROBI; (i) by using the callback features to pass the conflict constraints to GUROBI ourselves as needed, and (ii)  by providing all the conflict constraints to GUROBI as lazy constraints when the model is initialized. We tried various cut selection and management strategies for user and lazy callbacks for option (i), the best of which yielded basically the same level of performance with option (ii). As such, we continued our experiments with the second alternative.
Second, although it is not possible to explicitly impose a time limit for DA, after some preliminary testing, we set the number of iterations to an appropriate value yielding the desired execution time of 120 seconds, which indeed was the highest time we observed for our particular problem, {\PRWA}.
Once the number of iterations are fixed, DA's execution times show almost no variability across different instances.
%Therefore, we determined two values for the number of iterations and fixed them for our 60- and 120-second experiments in DA, while keeping the values of other run parameters intact.
Third, we failed to utilize the penalty coefficient values suggested in Proposition \ref{prop:penalty_coef} because the values were outside the acceptable range for DA. 
Thus, we used smaller values for the penalty coefficient that do not necessarily guarantee the exactness of the model, but always yielded feasible solutions in practice. 
\EI

The remainder of this section is organized as follows.
In Section \ref{subsec:instances}, we provide some details about the networks used and the way we generated our problem instances.
In Section \ref{subsec:performances}, we present our main set of experimental results, follow it by an initial solution and run time analysis in Section \ref{subsec:runtime_analysis}, and then provide the results of our penalty coefficient analysis in Section \ref{subsec:pentrials}.
Finally in Section \ref{subsec:observations}, we summarize our key observations as a result of our computational study.

\subsection{Problem Instances}
\label{subsec:instances}

In order to generate our test instances, we made use of four different network topologies of varying sizes and densities from the literature, namely EON \citep{tornatore2007wdm}, Brazil \citep{jaumard2006much}, USA and China \citep{hwang2009one}.
%Atlanta \citep{orlowski2010sndlib}, NSF \citep{ramaswami1996design}, and COST239 \citep{tan1996wavelength}, 
%Selected characteristics of these networks, parameter values used to generate our instances from these networks, and instance size-related 
In Table \ref{tab:network_info}, we provide descriptive information about the networks and the test instances we generated from them.

\begin{table}[!h]
	\centering
	\caption{Instance information.}
	\label{tab:network_info}%
	\scalebox{0.7}{
    \hspace*{-0.5cm}
    		\begin{tabular}{c S[table-format=2.0] S[table-format=3.0] S[table-format=1.2] cc  c S[table-format=1.2] S[table-format=3.1]}
			\toprule
			& \multicolumn{3}{c}{Network features} & \multicolumn{2}{c}{Instance parameters}  & \multicolumn{3}{c}{Instance sizes}  \\
			\cmidrule(lr){2-4} \cmidrule(lr){5-6} \cmidrule(lr){7-9}
			  \parbox{1.9cm}{\centering Network} &   
			  \parbox{1.5cm}{\centering \# nodes ($|V|$)} &  
			  \parbox{1.25cm}{\centering \# links ($|\linkSet|$)} & 
			  \parbox{2.5cm}{\centering Avg \\ in/out-degree } & 
			  \parbox{2.5cm}{\centering \# wavelengths ($|\wavelengthSet|$)} & 
			  \parbox{2.4cm}{\centering  \# requests  ($|\requestSet|$)} &
			  \parbox{2.25cm}{\centering  \# vars } &
			  \parbox{2.8cm}{\centering  \# cons / \# vars in {\IPstrong}} &
			  \parbox{2.8cm}{\centering   \# cons / \# vars in {\IPbase}}
			  \\
			\midrule
			EON   &  19  &  78  & 2.05  & \{5, 10, 15\} & \{60, 80, 100\}       & [2400, 8000] & 0.67  & 234.0   \\
			USA   &  24   &  86   & 1.79 & \{5, 10, 15\}  & \{60, 80, 100\}      & [2400, 8000] & 0.69  & 212.0   \\
			Brazil   &  27  &  140  &  2.67  & \{5, 10, 15\}  & \{60, 80, 100\}   & [2400, 8000] & 0.77  & 131.3  \\
			China   &  39  &  144  &  1.85  & \{5, 10, 15\}  & \{60, 80, 100\}   & [2400, 8000] & 0.79  & 174.5  \\
			\bottomrule 
		\end{tabular}%	      
	} 
\end{table}%	      

\begin{comment}
\begin{table}[!h]
	\centering
	\caption{Instance information.}
	\label{tab:network_info}%
	\scalebox{0.75}{
    \def\arraystretch{1.25}%  1 is the default
    \setlength\aboverulesep{1.5pt} %0pt default
    \setlength\belowrulesep{2.5pt} % 0.65ex default  
    		\begin{tabular}{c S[table-format=2.0] S[table-format=3.0] S[table-format=1.2] ccc}
			\toprule
			& \multicolumn{3}{c}{Network features} & & \multicolumn{2}{c}{Instance generation parameters} \\
			\cmidrule(lr){2-4} \cmidrule(lr){6-7}
			  \parbox{2cm}{\centering Network} &   
			  \parbox{2.25cm}{\centering \# nodes ($|V|$)} &  
			  \parbox{2cm}{\centering \# links ($|\linkSet|$)} & 
			  \parbox{3cm}{\centering Avg in/out-degree } & & 
			  \parbox{3.25cm}{\centering \# wavelengths ($|\wavelengthSet|$)} & 
			  \parbox{2.75cm}{\centering  \# requests  ($|\requestSet|$)} \\
			\midrule
			EON   &  19  &  78  & 2.05  && \{5, 10, 15\} & \{60, 80, 100\} \\
			USA   &  24   &  86   & 1.79 && \{5, 10, 15\}  & \{60, 80, 100\} \\
			Brazil   &  27  &  140  &  2.67  && \{5, 10, 15\}  & \{60, 80, 100\} \\
			China   &  39  &  144  &  1.85  && \{5, 10, 15\}  & \{60, 80, 100\} \\
			\bottomrule
		\end{tabular}%
	}
\end{table}%
\end{comment}

For all the networks, we assume that each edge $\{u,v\}$ is represented with a pair of links $(u,v)$ and $(v,u)$.
%the links (the edges of the networks) are bidirectional, i.e., links can be used for information transmission in both directions (from either end of the edge).
We use three different wavelength capacities on links, $|\wavelengthSet| \in \{5,10,15\}$, and three different numbers for requests, $|\requestSet| \in \{60,80,100\}$.
We generate five random instances for each parameter combination, which makes a total of $180$ test instances. 
%created $ 100 $ random instances per network, making a total of 300 test instances.
For each instance, we selected a distinct source and destination pair for every request among all possible ordered node pairs in the network.
For each request,  we randomly selected four working and four protection paths between the source and destination nodes, except when $|\wavelengthSet| = 15$ and $|\requestSet| \in \{80,100\}$ where we decreased the number of working/protection paths to three and two, so that the sizes of those instances become eligible for DA, which can handle at most 8192 variables.
We formed the working and protection lightpaths of every request by combining each generated path with every one of the available wavelengths.

As indicators of instance sizes, we provide the number of variables (``\# vars") as well as the ratio of the number of constraints to the number of variables (``\# cons / \# vars"). both for {\IPstrong} and {\IPbase} formulations. It is noteworthy that the strengthened set of our conflict constraints leads to a remarkable decrease in the total number of constraints; the ratio of the number of constraints to the number of variables is 170 to 350 times higher in {\IPbase} than in {\IPstrong}.
%{\cemph Our test set consists of nontrivial instances, for which it is typically hard to obtain optimal solutions.}
 %and hence in the ratio of the number of constraints to the number of variables, which is 170 to 350 times higher in {\IPbase} than in {\IPstrong}.
%The maximum number of variables that the second generation of DA can handle being 8192, our instances are eligible to be tested on it. 

%{\cpink JUSTIFY THE PARAMETERS USED IN INSTANCE GENERATION, USING THEIR RATIOS AND COMPARING THEM TO REALISTIC ONES FOR INSTANCE!!! 
%Note by Oylum: I will do this later, when I am finished with the rest of the article...}

\subsection{Performance Comparison of the \Methods}
\label{subsec:performances}

In this section, we report the results of our main set of experiments and compare the performances of all the {\methods} under consideration, including {\RSheur}.  
For our particular problem {\PRWA}, the steps of {\RSheur} are as follows: 
At each step, we generate a random permutation of the request set.
%Initially, we generate a set of random permutations of the request set. 
Considering the requests in the order they appear in the permutation, we first find a link-disjoint pair of working and protection paths, starting the search with the shortest paths in the precomputed set of paths for the associated request. If such a pair exists, we search for the first available wavelength whose assignment does not lead to a conflict with the set of lightpaths assigned to the current set of granted requests, if any. If no wavelength is available, we select the next pair of working and protection paths from the precomputed set (following an increasing order in path lengths), and seek the first available wavelength as before. 
This procedure continues until we find a feasible pair of lightpaths or no alternative is left to consider. %Each request in the sequence is handled in the same way.
In our implementation, we continue considering different permutations of the set of requests until the imposed time limit is reached, and a 120-second time limit yielded an average of 7243 permutations per instance in our test set.
At the end, we select a solution with maximum number of granted requests over the set of all considered permutations.

Table \ref{tab:performances_summary} reports the number of granted requests for all {\methods} averaged over all wavelength and request numbers, and the average number of links used per lightpath of a granted request in parentheses in a second row for each network, under two groups of columns corresponding to the experiments with 120- and 600-second time limits. %, and with {\RSheur}. 
For direct solving of  {\IPstrong} with GUROBI, we present additional sets of results obtained by setting the solver's emphasis on search for feasible solutions (``{\IPstrong} (f)") for both time limits, while all other GUROBI-related results are obtained under default settings.
%It also contains the range of the number of variables in our instances (``\# vars"), as well as the average ratio of the number of constraints to the number of variables (``${\# \text{~cons}~/~\# \text{~vars}}$") both for {\IPstrong} and {\IPbase}.

% Table generated by Excel2LaTeX from sheet 'SUMMARY'
\begin{table}[htbp]
  \centering
  \caption{Summary of overall results in terms of the number of granted requests (average number of links per granted request shown in parentheses).}
   \label{tab:performances_summary}%
   \hspace*{-0.25cm}
   \scalebox{0.73}{
    \begin{tabular}{cccccccccc}
    \toprule
          &  \multicolumn{7}{c}{120 sec}   &  \multicolumn{2}{c}{600 sec} \\
    	 \cmidrule(lr){2-8} \cmidrule(lr){9-10}
	  &   &  & \multicolumn{5}{c}{GUROBI}  &   \multicolumn{2}{c}{GUROBI}  \\
	 \cmidrule(lr){4-8} \cmidrule(lr){9-10}
          \parbox{2.25cm}{\centering Network} 
	  & \parbox{1.75cm}{\centering DA} & \parbox{1.9cm}{\centering {\RSheur}} & \parbox{1.9cm}{\centering  \IPstrong~(f)} & \parbox{1.9cm}{\centering \IPstrong} & \parbox{1.6cm}{\centering \IPbase} & \parbox{1.6cm}{\centering B\&C} & \parbox{1.6cm}{\centering QUBO} 
	  &  \parbox{2cm}{\centering  \IPstrong~(f)} & \parbox{2cm}{\centering \IPstrong} 
	  \\
    \midrule
    EON    &   23.3    &   13.7    &   21.0   & 20.6     & 16.7   & 2.2        & 6.1      &  21.6   & 21.0  	 \\ 
    	        &   (9.1)   &   (9.4)    &   (9.0)  & (9.2)    & (9.3)   & (9.0)     & (8.8)    &   (9.0)  & (9.0)  	 \vspace*{0.1cm} \\ 
    USA    &   24.7     &   15.3    &   22.6   & 22.2    & 19.9    & 2.3       & 5.0        &  23.2   & 22.7 	\\
    	        &   (8.9)   &   (9.0)    &   (8.9)   & (9.1)    & (8.9)   & (8.5)    & (8.9)    &  (8.9)  & (8.9)	 \vspace*{0.1cm} \\ 
    Brazil   &  43.7     &   28.1    &   37.7   & 38.1    & 30.8    & 7.7       & 7.6       &  41.0   & 40.0	 \\
    	        &   (8.1)   &   (8.0)    &   (8.1)   & (8.2)    & (8.1)   & (7.8)     & (7.5)   &  (8.1)   & (8.1)	\vspace*{0.1cm} \\ 
    China  &   31.6    &   19.4      &   28.4   & 28.2    & 22.9    & 2.1       & 5.1        &  29.2   & 28.7 \\
    	        &  (10.4)  &  (10.2)    &  (10.3)  & (10.5) & (10.3) & (10.1)   & (9.7)    &  (10.3) & (10.4)  \\ 
    \bottomrule
    \end{tabular}%
    }
\end{table}%

We observe from Table \ref{tab:performances_summary} that the number of granted requests obtained from DA in 120 seconds outperform those obtained from 120- and even 600-second experiments with GUROBI for all networks. 
Among the GUROBI-based alternatives, on the other hand, solving of {\IPstrong} directly with GUROBI yields the best results, most of the time slightly better when the emphasis is on finding feasible solutions.
Even though the results from {\RSheur} are better than those obtained from solving {\IPstrong} through B\&C (with callback) and the QUBO formulation directly with GUROBI, they are all still far from being comparable to the other alternatives.
%{\cemph MOVE THE RELEVANT INFO TO INSTANCE INFO TABLE AND THE COMMENTS TO THEREAFTER... The strengthened set of our conflict constraints leads to a remarkable decrease in the total number of constraints, and hence in the ratio of the number of constraints to the number of variables, which is 170 to 350 times higher in {\IPbase} than in {\IPstrong}.}
Also, {\IPstrong} yields considerably better results than {\IPbase}, as expected, and increasing the time limit from 120 to 600 seconds leads to a relative improvement of only 0.4 to 1.9 in the average number of granted requests for {\IPstrong}.
%For the 60-second results, the average number of granted requests from DA is significantly better than that from {\IPstrong} for the instances based on the Brazil network, and almost the same for the remaining ones. Also, it is noteworthy that increasing the time limit from 120 to 600 seconds leads to a relative improvement of only 0.4 to 1.9 in the average number of granted requests for GUROBI.
As for the link usage, since it is our secondary objective, two solutions with different numbers of granted requests cannot be compared on the basis of the total number of links used.
Nevertheless, in terms of the average number of links used per lightpath of a granted request, DA in 120 seconds and GUROBI in 600 seconds perform almost always the same, even though DA achieves a higher number of granted requests.
%Nevertheless, we note that even though DA achieves a higher number of granted requests in 120 seconds, the average number of links used per granted request is the same as in 600-second {\IPstrong} solutions.

One might expect the B\&C method to compete with feeding the IP formulation to the solver as a whole; however, B\&C performs even worse than directly solving the QUBO formulation with GUROBI mostly.
In order to see whether it could provide good-quality solutions when longer run times are allowed, we also tested it with a 600-second time limit on a selected set of 30 instances based on USA and Brazil networks, with $(|\wavelengthSet|, |\requestSet|) \in \{(5,60), (10,80), (15,100)\}$, where $|\wavelengthSet|$ denotes the number of wavelengths and $|\requestSet|$ the number of requests.
For each pair of parameter values, we have ten instances in this selected set, five per network.
For half of these instances based on USA network, the average number of granted requests rises from 3.3 to 7.5, and for the remaining ones based on Brazil network it rises from 4.8 to 10.2.
Consequently, we conclude that the B\&C method fails to be a favorable {\method} for further consideration, as do solving {\IPbase} and QUBO formulations directly with GUROBI. 
We also tested the heuristic {\RSheur} under a 600-second time limit, and observed that the increase in the average number of granted requests is less than one, confirming that this method too is not a favorable option for further consideration.
%In addition, since the 120-second performance of solving {\IPbase} and QUBO formulations directly with GUROBI were outperformed by that of directly solving {\IPstrong} with GUROBI, we did not conduct 600-second experiments for them.

We confine the rest of our analysis to the most promising two alternatives; namely, solving {\IPstrong} using GUROBI with an emphasis on finding feasible solutions, and solving the QUBO formulation with DA.
Figure \ref{fig:DA_vs_IP} compares the average number of granted requests for every $(|\wavelengthSet|, |\requestSet|)$ pair, using 120- and 600-second time limits for DA and GUROBI, respectively. %of the number of wavelengths ($|\wavelengthSet|$) and number of requests  ($|\requestSet|$)
The four plots demonstrate that DA outperforms GUROBI for each parameter combination and every one of the four networks, and the difference is particularly evident when the number of wavelengths is ten.
In fact, DA not only outperforms in terms of the average values but also provides predominantly superior or otherwise as good results in almost all individual instances.
These results show that {\cemph for our test set comprising instances that are hard to optimally solve}, DA yields solutions within only two minutes which are better than or as good as the ones GUROBI attains in ten minutes. %over all considered networks and instance generation parameters.

\begin{figure}[!h]
    \centering
    \begin{subfigure}[t]{0.35\textwidth}
	\centering
        \includegraphics[scale=0.4]{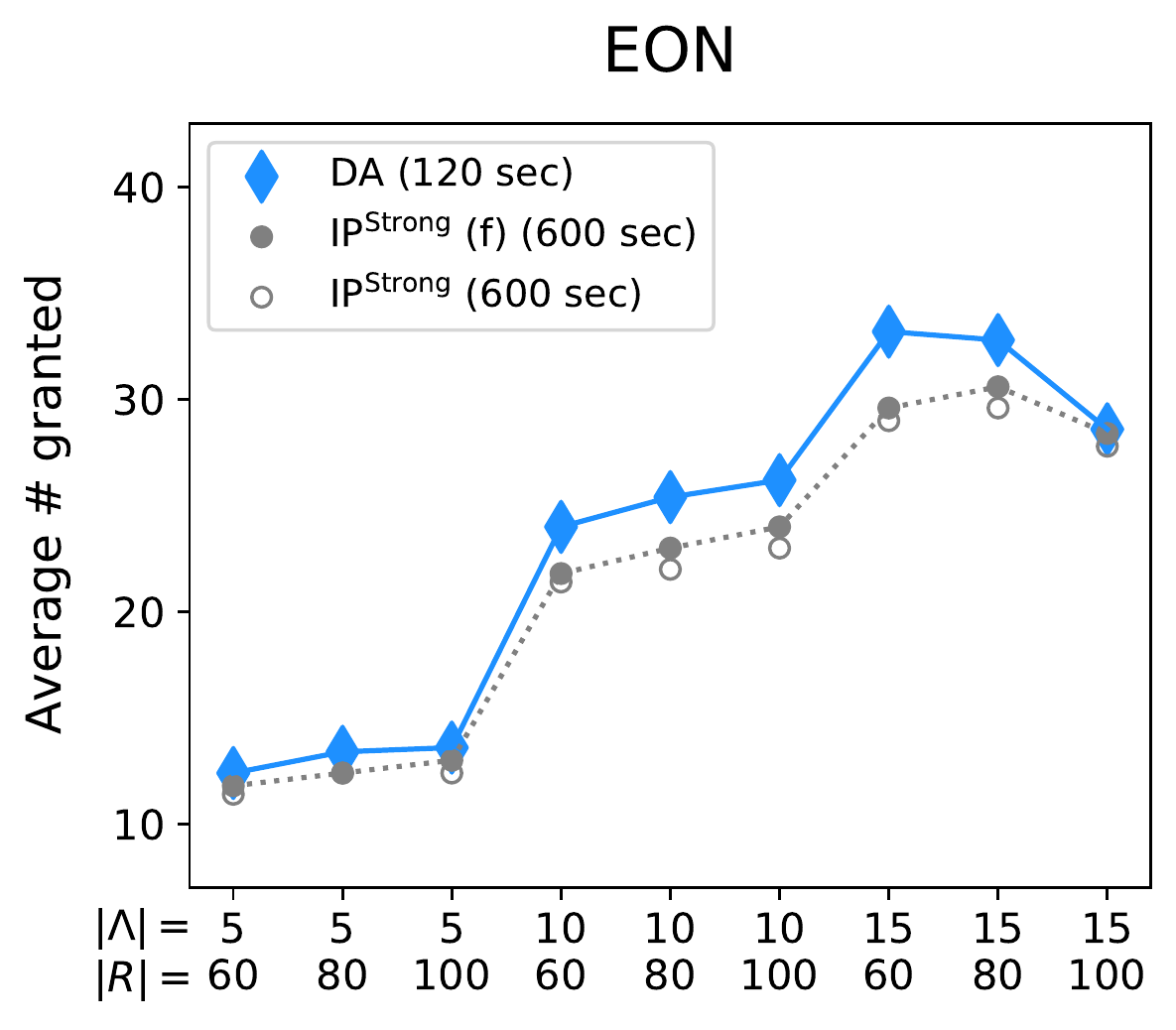}
    \end{subfigure}
     ~
    \begin{subfigure}[t]{0.35\textwidth}
	\centering
        \includegraphics[scale=0.4]{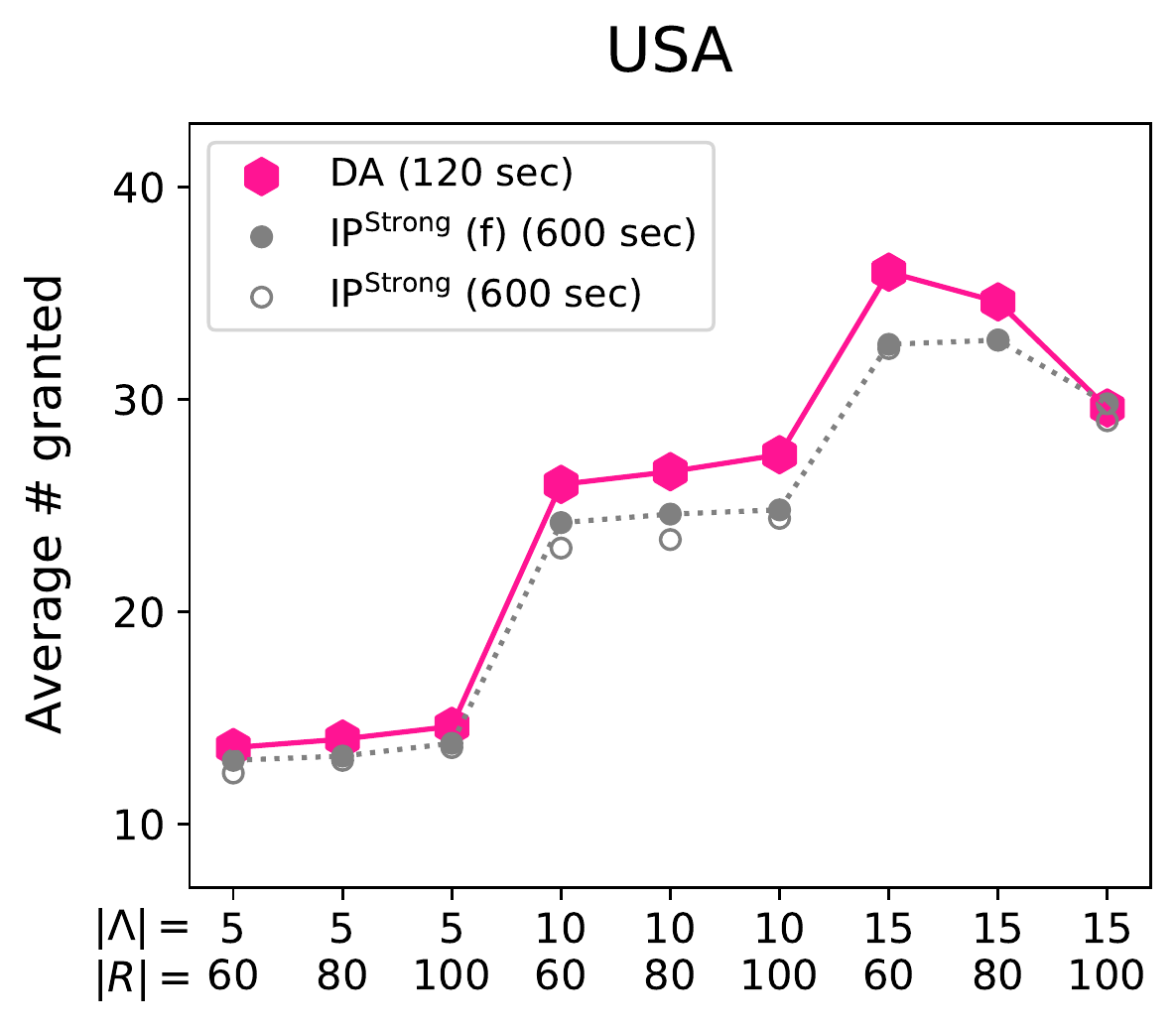}
    \end{subfigure}
     ~
    \begin{subfigure}[t]{0.35\textwidth}
	\centering
        \includegraphics[scale=0.4]{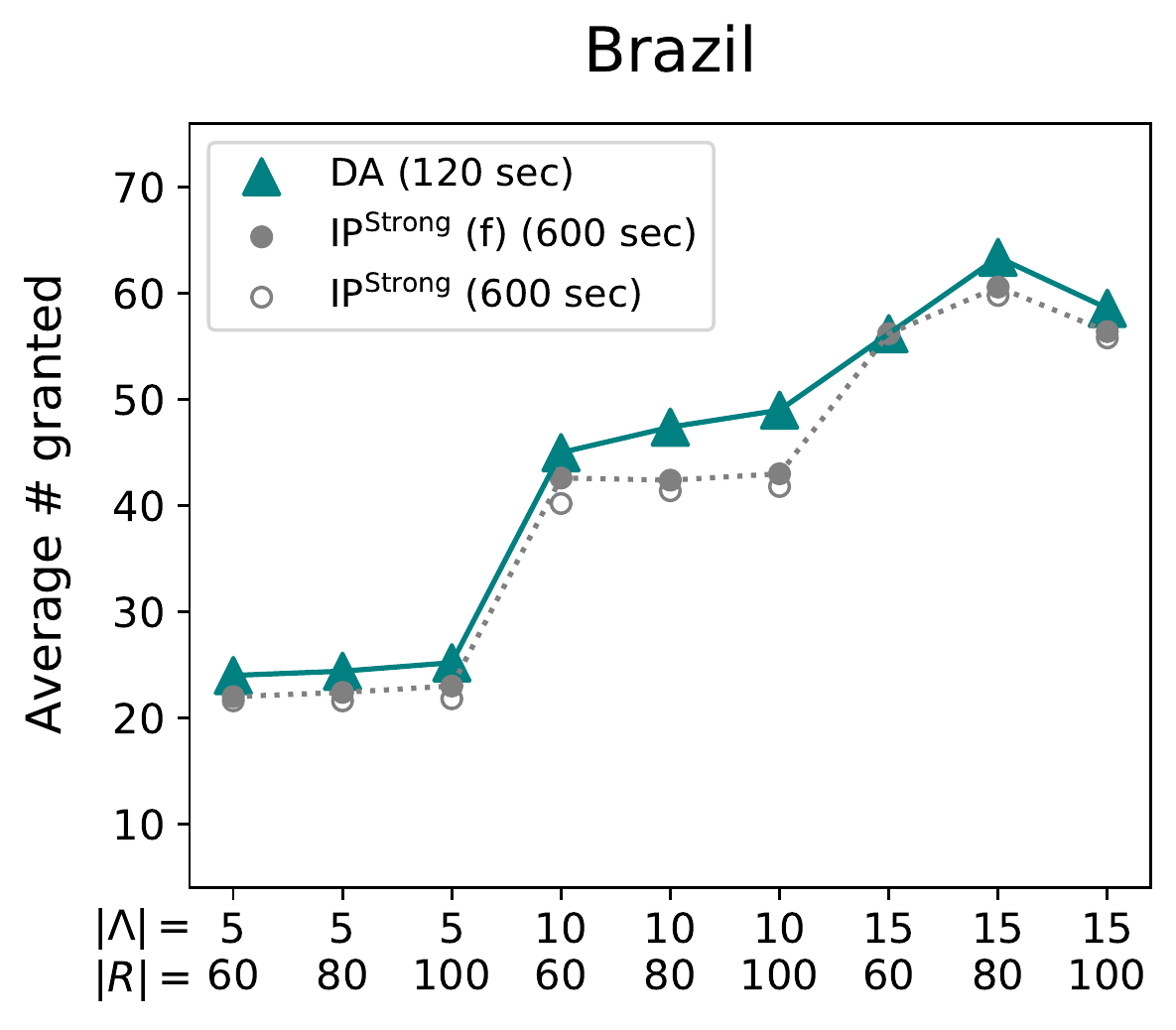}
    \end{subfigure}
    ~
    \begin{subfigure}[t]{0.35\textwidth}
	\centering
        \includegraphics[scale=0.4]{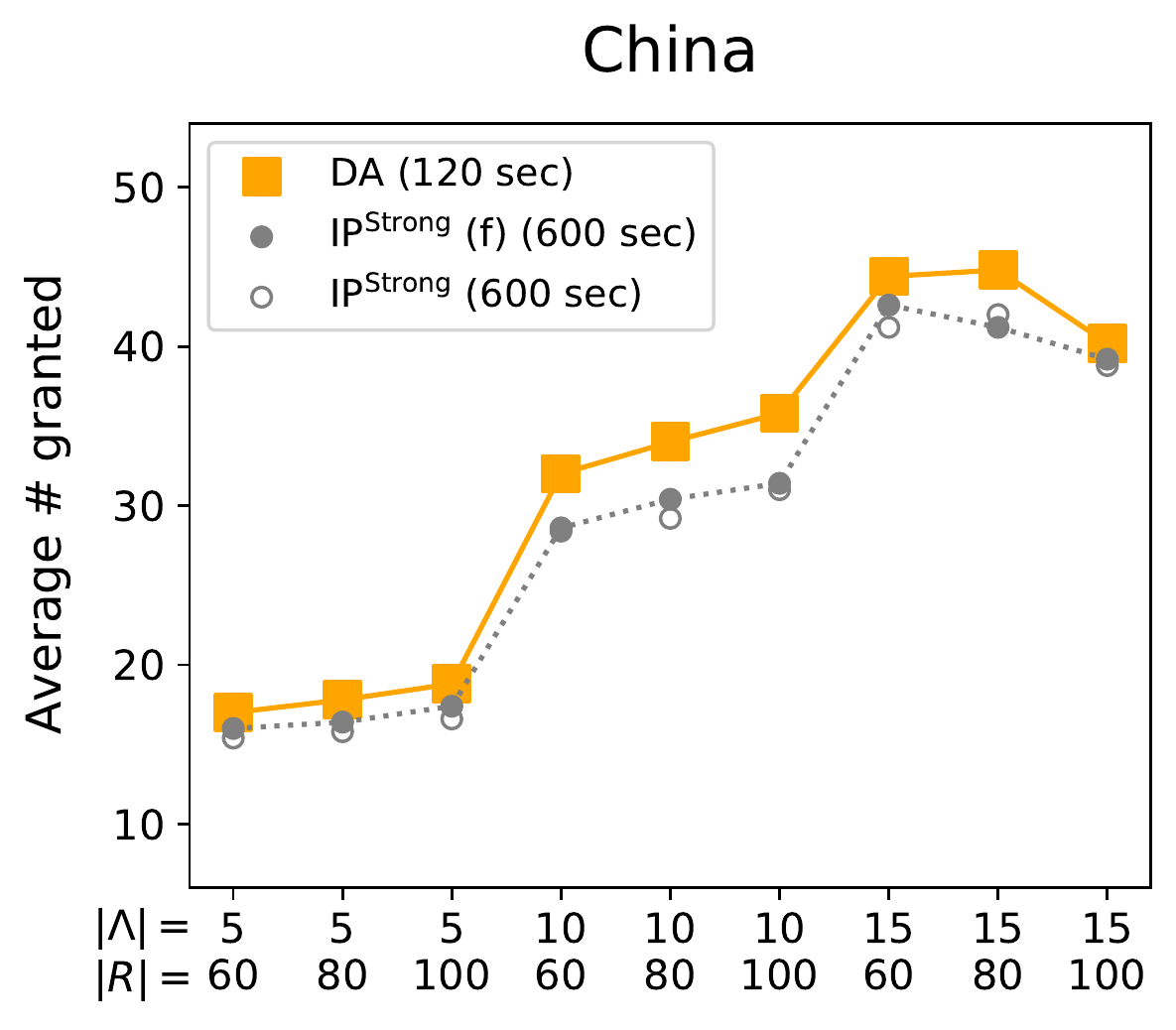}
    \end{subfigure}
    \caption{Comparison of 120-second DA and 600-second {\IPstrong} results (with and without the solver's emphasis on finding feasible solution for the latter) for every combination of number of wavelengths and requests.}
	\label{fig:DA_vs_IP}
\end{figure}

\subsection{Initial Solution and Run Time Analysis}
\label{subsec:runtime_analysis}

Having observed that GUROBI cannot much improve the solution quality despite the significant increase in the time limit, we now investigate whether feeding a DA solution to GUROBI as an initial (warm start) solution would improve its performance, and also how much time it would take for GUROBI to reach DA's performance level.
To this end, we use the set of 30 representative instances mentioned above, which covers both the extremes and the intermediate portion of the parameter space.
%we take 30 representative instances based on USA and Brazil networks, with $(|\wavelengthSet|, |\requestSet|) \in \{(5,60), (10,80), (15,100)\}$ and ten instances for each pair of parameter values, five per network. This way, the selected set of instances covers both the extremes and the intermediate portion of the parameter space.
For our initial solution analysis, we feed the DA solutions to GUROBI as a starting point in solving {\IPstrong} with a time limit of 600 seconds and emphasis on finding feasible solutions.
In order to look into the possible effect of incorporating link usage in the objective, we additionally carry out the same set of experiments by maximizing the number of granted requests only, i.e., by setting $\alpha = 0$ and $\beta = 1$.
For the run time analysis, we set the objective values of solutions from 120-second DA experiments as a stopping condition for GUROBI, along with a time limit of 7200 seconds (two hours).
Note that the objective value of a solution with a certain number of granted requests cannot be attained or surpassed without granting at least that many requests due to the way we set the value of the $\beta$ coefficient for prioritizing request granting over link usage.

Table \ref{tab:warmstart} contains the results of the above-mentioned experiments. % for each parameter pair $(|\wavelengthSet|, |\requestSet|)$ and network combination separately. 
The results of the initial solution experiments are provided under a five-column block (``{\IPstrong} (f) w/ initial sol (600 sec)"), and is divided into two groups; one for the results of the experiments with the original objective function that incorporates both the request granting and link usage components (``Weighted obj"), and the other for the case when only the request granting component of the objective is considered (``Request granting obj"). 
Each group contains the number of granted requests and used links averaged over the five instances associated with the parameter pair $(|\wavelengthSet|, |\requestSet|)$ and network in each row.
For the experiments with the original weighted objective, average optimality gap is also reported, defined as $\%~\text{gap} = \frac{\text{UB} - \text{LB}}{|\text{UB}|} \cdot 100$ where UB and LB respectively denote the upper and lower bounds for the objective value.
%...  as well as the average optimality gap percentages when the objective is in its original form (because gap values associated with only-request-granting objective are not comparable to them).
The last block of columns contains the results of the run time analysis experiments (``{\IPstrong} (f) until DA obj val (7200 sec)").
This block additionally includes the average run times in the last column (``Time"), as well as the number of granted requests GUROBI attains when the experiments are allowed to run for a full two hours, i.e., when the objective level of DA is not imposed as a stopping condition, provided in parentheses.

% Table generated by Excel2LaTeX from sheet 'Warmstart_UntilDAQuality'
\begin{table}[htbp]
  \centering
  \caption{Initial solution and run time analysis results for the selected set of representative instances.}
   \label{tab:warmstart}
   \hspace*{-0.35cm}
   \scalebox{0.67}{
    \begin{tabular}{ccccccccccccS[table-format=4.1]}
    \toprule
          & & & & \multicolumn{5}{c}{{\IPstrong} (f) w/ initial sol (600 sec)} & \multicolumn{4}{c}{{\IPstrong} (f) until DA obj val (7200 sec)} \\
          \cmidrule(lr){5-9} \cmidrule(lr){10-13} 
          & & \multicolumn{2}{c}{DA (120 sec)} & \multicolumn{3}{c}{Weighted obj} & \multicolumn{2}{c}{Request granting obj}  & \multicolumn{4}{c}{Weighted obj}  \\
    	\cmidrule(lr){3-4} \cmidrule(lr){5-7} \cmidrule(lr){8-9} \cmidrule(lr){10-13}
           \parbox{1.2cm}{\centering  $(|\wavelengthSet|, |\requestSet|)$} & \parbox{1.5cm}{\centering Network} & 
           \parbox{1.75cm}{\centering \# granted} &\parbox{1.4cm}{\centering \# links} & 
           \parbox{1.75cm}{\centering \# granted } &\parbox{1.4cm}{\centering \# links} & \parbox{1.2cm}{\centering \% gap} & 
           \parbox{1.75cm}{\centering \# granted} & \parbox{1.4cm}{\centering \# links} & 
           \parbox{2cm}{\centering \# granted} & \parbox{1.4cm}{\centering \# links} & \parbox{1.2cm}{\centering \% gap} & \parbox{1.2cm}{\centering Time} \\
    \midrule
    $(5, 60)$     & USA   &  13.6 & 227.8 & 13.6 & 227.6 & 30.6   & 13.6 & 227.8 & 13.4 (13.4) & 226.6 & 31.1 & 4343.8 \\ \vspace*{0.2cm}
    		      & Brazil &  24.0 & 379.0 & 24.0 & 379.0 & 50.4   & 24.0 & 379.0 & 22.8 (22.8) & 357.0 & 57.6 & 6742.7 \\
    $(10, 80)$   & USA   & 26.6  & 485.0 & 26.6  & 485.0 & 35.1  & 26.6 & 485.0 & 25.2 \phantom{(00.0)} & 443.2 & 42.1 & 7200.0 \\ \vspace*{0.2cm}
    		       & Brazil & 47.4  & 777.6 & 47.4  & 777.0 & 47.8  & 47.6 & 781.6 & 42.8 \phantom{(00.0)} & 706.2 & 63.7 & 7200.0 \\
    $(15, 100)$ & USA   & 29.6  & 589.8 & 30.2  & 600.8 & 12.0  & 30.2 & 606.2 & 29.6 (30.2) & 587.4 & 15.8 & 222.8 \\ 
    		       & Brazil & 58.6  & 985.8 & 58.6  & 983.4 & 17.6  & 58.6 & 985.8 & 57.2 (57.8) & 950.6 & 20.6 & 5556.8 \\ 		 
    \bottomrule
    \end{tabular}%
    }
\end{table}%

We see from Table \ref{tab:warmstart} that GUROBI is mostly unable to improve the initial solution from DA within the 600-second time limit, both in terms of granted requests and link usage.
The situation remains the same when the objective is reduced into the maximization of request granting, indicating that the link usage component in the objective does not really affect the performance, and thus a hierarchical solution approach for {\PRWA} to handle the two separate objectives would not perform any better than solving the problem with a weighted objective.
As for the run time analysis, in 66.7\% of the selected instances (20 out of 30), GUROBI cannot reach the 120-second objective value of DA after two hours. 
The time for the remaining 10 instances to reach DA's objective value is 1233 seconds on the average, an order of magnitude higher than DA's run time approximately.
Also, as a result of the full two-hour experiments, the average number of granted requests (shown in parentheses) either does not change or increases by 0.6 only.
%Moreover, the optimality gap percentages remain considerably high, around 12\% to 17\% higher than those from 600-second experiments with initial solutions.
%While the objective lower bounds from the initial solution experiments are only 0.1\% worse than those of the run time experiments, the upper bounds are 4.3\% better, signifying that the 6.2\% difference in the average optimality gap value is mainly due to the DA solutions.
In terms of average optimality gaps, the values from the initial solution experiments are 6.2\% better than those of the run time experiments.
This is mainly due to the solutions from DA, which yield 4.3\% better upper bounds than the ones from the run time experiments, while lower bounds are only 0.1\% worse.
These results indicate that DA is capable of delivering good-quality solutions in only two minutes, which typically cannot be attained by a state-of-the-art solver after two hours of run time.

\subsection{Penalty Coefficient Analysis}
\label{subsec:pentrials}

As mentioned before, penalty coefficient values rendering our QUBO formulations exact go beyond the acceptable ranges for DA.
Nevertheless, we observed in our preliminary experiments that by setting the penalty coefficient values with respect to the $\beta$ values (which we compute using Proposition \ref{prop:obj_weights}), DA can always deliver solutions that are feasible for the IP models.
Specifically, we set $\penaltyCoef = \beta + 100$, yielding IP feasible solutions throughout all of our experiments.
While testing different values for the penalty coefficient, however, we noticed that it can have a considerable impact on DA's performance, which we investigate in the sequel.
%Therefore, we next investigate the sensitivity of DA's performance to the values of penalty coefficient $\penaltyCoef$.

Figure \ref{fig:pentrials} contains the results from 120-second DA experiments with different penalty coefficient values, using the same set of 30 representative instances mentioned above.
While $x$-axes of the two plots show the penalty coefficient values as a function of the $\beta$ parameter, $y$-axes display the average number of granted requests. 
%Solid triangular and hexagonal markers stand for the values obtained from DA, and the dashed and dotted lines show the average values obtained from {\IPstrong} under 7200- and 600-second time limits, respectively.
The lower ends of the penalty coefficient values are those used in our main set of experiments, whereas the higher end is set close to the values before passing beyond the acceptable ranges for DA. % ($ \beta + 3000$).

\begin{figure}[!h]
    \centering
    \begin{subfigure}[t]{0.35\textwidth}
	\centering
        \includegraphics[scale=0.4]{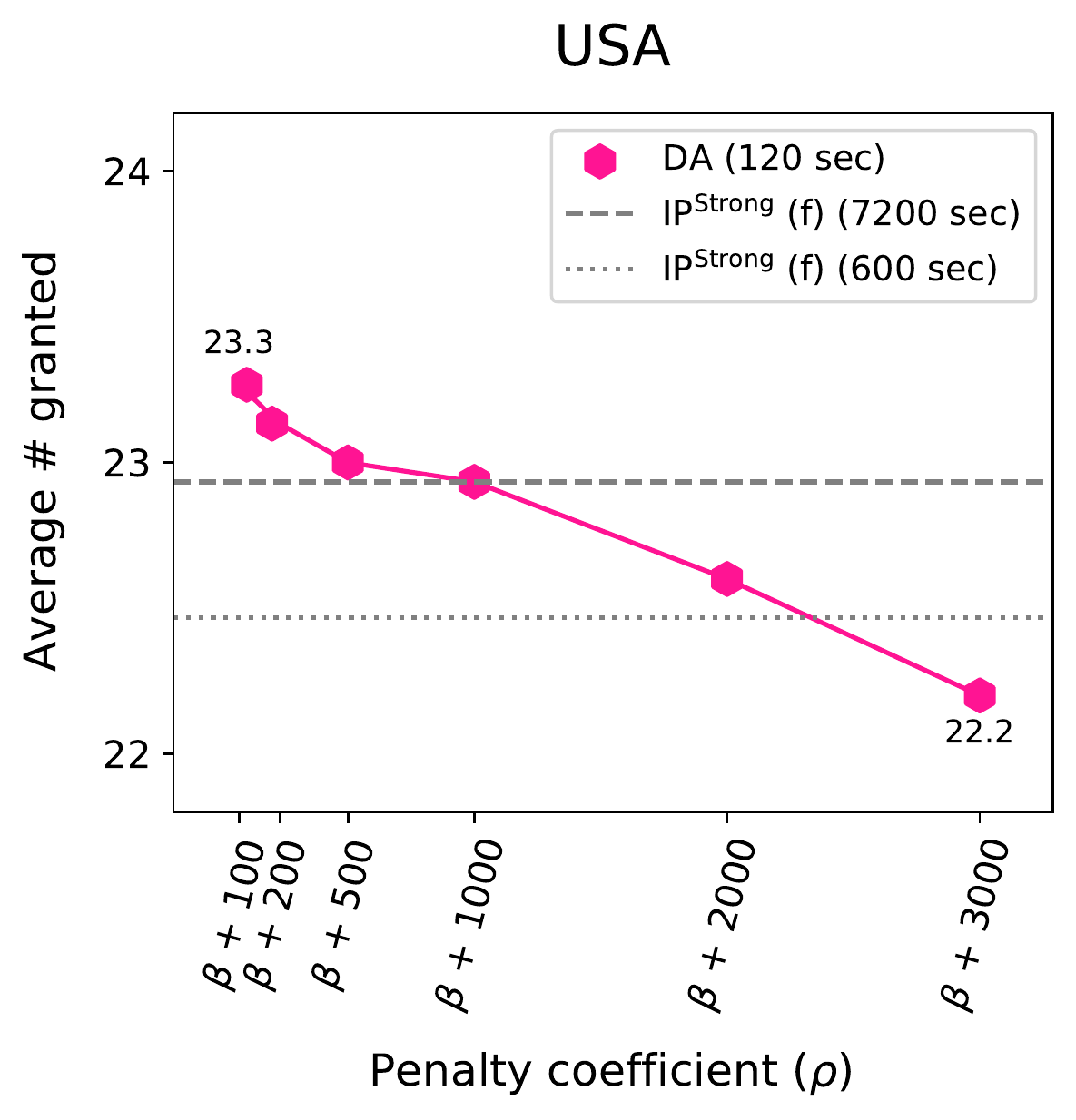}
    \end{subfigure}
	~
    \begin{subfigure}[t]{0.35\textwidth}
	\centering
        \includegraphics[scale=0.4]{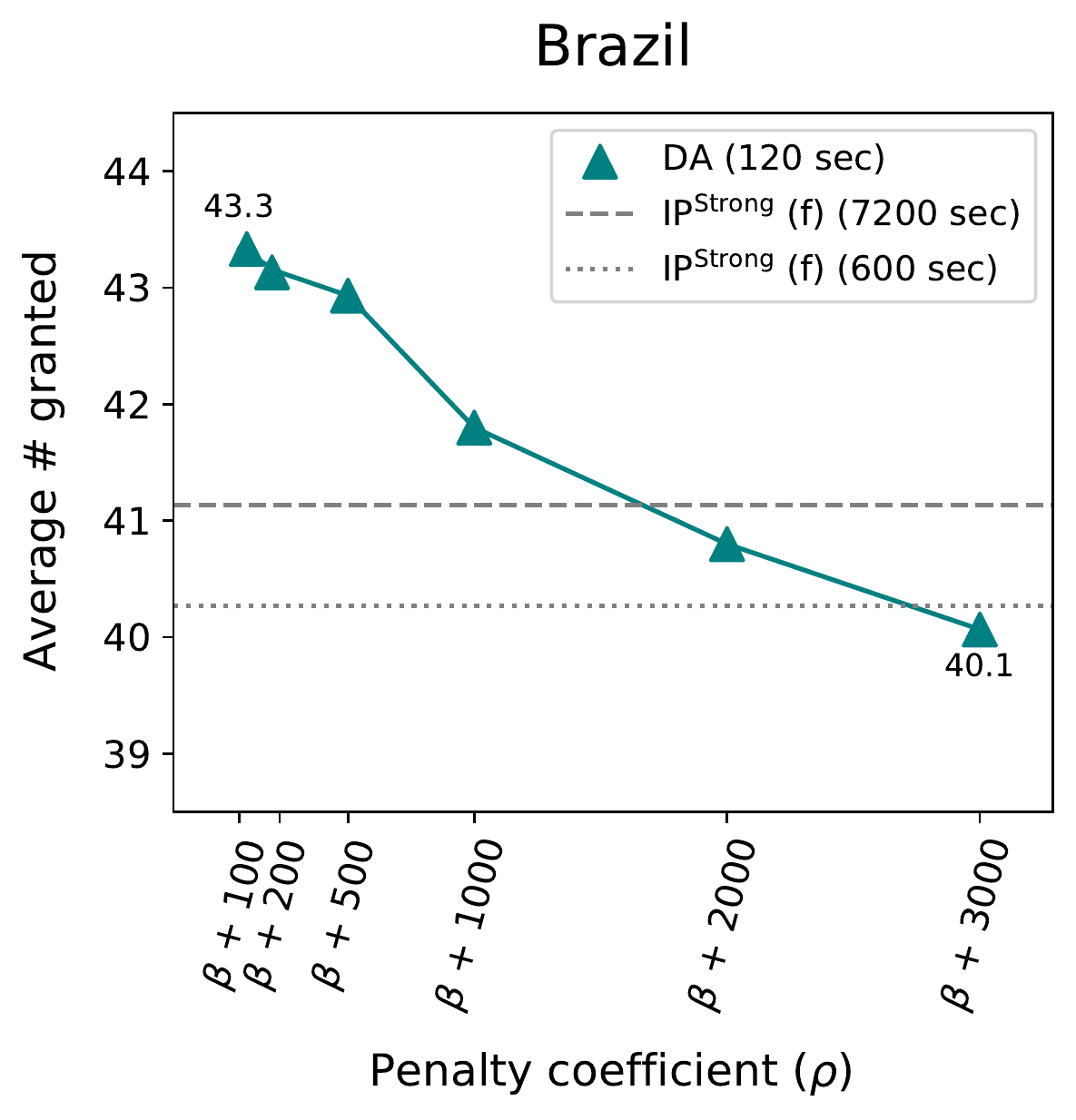}
    \end{subfigure}
	\caption{Results of the penalty coefficient trials with DA using the selected set of representative instances.}
	\label{fig:pentrials}
\end{figure}

We observe that decreasing the penalty coefficient values improve DA's performance, and the difference is more marked for the instances based on Brazil network. 
In comparison to the two-hour performance of GUROBI, not only does DA outperform with the best possible penalty coefficient values for it (i.e., when $ \penaltyCoef = \beta + 100$), it yields better or at least as good results when $ \beta + 100 \leq \penaltyCoef \leq \beta + 1000$.
Thus, even without much tuning of the penalty coefficient parameter, DA in two minutes delivers better results than a state-of-the-art solver does after hours of run times. 
%In fact, even the worst-case values from DA outperform the results from the two-hour experiments with GUROBI as IP solver. 

\subsection{Key Observations}
\label{subsec:observations}

We lastly summarize some main findings arising from our computational study. 
\BI[leftmargin=*] %, label={--}
    \I DA is the best-performing option for {\PRWA} to obtain high-quality solutions in a short amount of time {\cemph for our nontrivial set of test instances}, and established methods coupled with state-of-the-art solvers yield mostly inferior results even after two hours of run time. %, especially when the number of constraints is high.
    \I DA's performance is robust across different instances and networks.
    \I Feeding solutions obtained from DA as initial solutions to GUROBI does not improve its performance; it cannot find any better solution even after a considerable amount of run time.
    \I Solving {\PRWA} by only considering the prioritized objective of request granting and ignoring the secondary one (link usage) does not perform any better than solving it with a weighted objective.
    \I Penalty coefficient values affect DA's solution quality, but even without particularly tuned values DA can outperform the two-hour results from the state of the art. 
    %\I Network and instance characteristic is a key determinant in the performance of solvers.
    %\I To solve a QUBO model directly, DA is the outperforming option.
    %\I Finding an optimal or near-optimal solution is computationally very difficult. %, especially for cases where the number of constraints is relatively high, which typically occurs when the network resources are not sufficient to accommodate all the requests.
    %\I {\cred As a result of preliminary experiments, a hierarchical solution approach for {\PRWA} to handle the two separate objectives does not perform any better than solving the problem with a weighted objective.}
\EI

%{\cred TEXT UPDATED UP TO THIS POINT, BUT MAKE THE RESULTS SOUND MORE EXCITING, THE THRILLING EFFECT IS MISSING NOW!!}

\section{Conclusion}
\label{sec:conclusion}
In this study, we consider the routing and wavelength assignment problem with protection, {\PRWA}. Through complexity analysis and computational experiments, we show that this problem is difficult to solve both in theory and practice. 
%\sout{To address the motivating practical need of obtaining high-quality solutions in short amount of computation time,} 
We propose a viable approach, formulating {\PRWA} as a QUBO model and employing the Digital Annealer, DA, as a new promising solution technology.
We find that this approach outperforms established methods {\cemph in handling our nontrivial set of test insances} by delivering solutions within two minutes that are superior to or as good as the ones other methods yield in two hours, as well as a heuristic method from the literature. %\sout{in the majority of the test cases and is comparable otherwise.}
%We find that the emerging technology DA outperforms the considered established exact methods, in delivering mostly superior or as good solutions in only two minutes compared to two hours of run time, whereas the problem-specific heuristic fails to be competitive. 
Also considering that future generations of DA are planned to achieve megabit-class performance \citep{da1MWeb}, we believe that the proposed approach has significant potential to be utilized widely in practice. As such, future research directions involve considering large-scale cases of {\PRWA}, adaptation and application of this emerging approach to other RWA problems. 
%\sout{{\cemph and also devising other heuristic approaches and comparing with DA. }} %-- I DO NOT FEEL COMFORTABLE WITH THIS LAST ADDITION. ESEGIN AKLINA KARPUZ KABUGU DUSURMEK GIBI GELIYOR BIRAZ BANA SU ANDA...

%\setlength\bibsep{-0.95pt}
%\bibliographystyle{informs2014} % outcomment this and next line in Case 1
%\bibliography{bibliography} % if more than one, comma separated

\section*{Acknowledgement}
The authors would like to thank Fujitsu Laboratories Ltd. and Fujitsu Consulting (Canada) Inc. for providing financial support and access to Digital Annealer at the University of Toronto.

\setlength\bibsep{-1.05pt}
%\section*{References}
\bibliographystyle{elsarticle-harv}
\bibliography{bibliography}

\newpage
\appendix
%\begin{appendix}
%\EquationsNumberedThrough  
\renewcommand{\theequation}{A.\arabic{equation}}

\section{Proofs}
In this section, we provide the proofs of all the propositions, lemmas and theorems we present in the paper, as well as their statements for the sake of completeness. We number the equations in this section with a prefix of ``A.".

\subsection{Proof of Proposition \ref{prop:beta_model_onediff}}
\label{OS:proof:prop_beta_one_model}
{\it }
%\begin{proposition}
%\label{prop:beta_model_onediff}
%\input{IP_subfiles/prop_beta_one_model}
%\end{proposition}
%
\begin{proof}
We first show that $\LBprioOne \geq \LBprioExact$, indicating that the sufficiency of the lower bound from model \eqref{eq:prioritization_optmodel0} is implied by that of \eqref{eq:prioritization_optmodel1}, 
%We first show that model \eqref{eq:prioritization_optmodel1} yields a sufficient lower bound for $\frac{\beta}{\alpha} > \LBprioOne $, 
which can be expressed as the following logical relationship:
\begin{align}
\frac{\beta}{\alpha} 
\ > \
\LBprioOne
\ \implies \
\frac{\beta}{\alpha} 
\ > \
\LBprioExact.
\label{impl:prioritization_optmodels}
\end{align}
Suppose that the left-hand side of \eqref{impl:prioritization_optmodels} holds, which means that for any pair of feasible solutions $(\xsol, \ysol)$ and $(\xsolalt, \ysolalt)$ with $\objValGranted = \objValaltGranted + 1$, the prioritization condition given in \eqref{eq:prioritization} holds, yielding
\begin{align}
\alpha \objValLink^{\text{max}} - \beta \objValGranted
\ < \
\alpha \objValaltLink^{\text{min}} - \beta \objValaltGranted.
\label{eq:prioritized_pair1}
\end{align}
Let $(\xsolaltalt, \ysolaltalt)$ be a feasible solution with $\objValaltaltGranted = \objValGranted + 1$. 
Then, the following holds by assumption:
\begin{align}
\alpha \objValaltaltLink^{\text{max}} - \beta \objValaltaltGranted 
\ &< \ 
\alpha \objValLink^{\text{min}} - \beta \objValGranted. \label{eq:prioritized_pair2}
\intertext{Combining \eqref{eq:prioritized_pair1} and \eqref{eq:prioritized_pair2}, we get}
\alpha \objValaltaltLink^{\text{min}} - \beta \objValaltaltGranted 
\ < \ 
\alpha \objValaltaltLink^{\text{max}} - \beta \objValaltaltGranted 
\ &< \ 
\alpha \objValLink^{\text{max}} - \beta \objValGranted
\ < \
\alpha \objValaltLink^{\text{min}} - \beta \objValaltGranted. \label{eq:inductive_chain}
\end{align}
This shows that $\objValaltaltGranted = \objValaltGranted + 2 \implies \objValaltalt < \objValalt$. 
Since the chain of inequalities in \eqref{eq:inductive_chain} can be extended to any feasible solution $(\xsolaltalt, \ysolaltalt)$ with $\objValaltaltGranted = \objValaltGranted + \grantedDiff$ for $\grantedDiff \geq 2$, we conclude that the relationship in \eqref{impl:prioritization_optmodels} holds and hence
$\LBprioOne \geq \LBprioExact $.
Moreover, the feasible region of the model in \eqref{eq:prioritization_optmodel0} contains that of \eqref{eq:prioritization_optmodel1}, thus $\LBprioOne \leq \LBprioExact$, which establishes $\LBprioExact = \LBprioOne$.
\hfill %\Halmos 
\end{proof}

\subsection{Proof of Proposition \ref{prop:obj_weights}}
\label{OS:proof:prop_obj_weights}
{\it %\begin{proposition}[IP objective weight selection]
%\label{prop:obj_weights}
Selecting $\alpha, \beta > 0$ such that 
\begin{align}
    \frac{\beta}{\alpha} \ &> \ |\requestSet| \ (\maxLPpairLength - 2) \ + \ 2  \tag{\ref{eq:obj_coef_ratio_suff_lb}}
\end{align}
prioritizes request granting over link usage in \eqref{eq:OF} for any feasible solution to the IP, i.e., solutions accepting more requests yield lower objective values, where ${\maxLPpairLength = \max\limits_{\requestInd \in \requestSet}\left\{ \max\limits_{\workingInd \in \workingSet^\requestInd} \big\{ \length^{\requestInd}_{\workingInd} \big\} + \max\limits_{\protectionInd \in \protectionSet^\requestInd}\big\{ \length^{\requestInd}_{\protectionInd} \big\} \right\}}$.
%where $\maxLPpairLength = \max_{\requestInd \in \requestSet}\left\{ \length^{\requestInd}_{wmax} +  \length^{\requestInd}_{pmax} \right\}$ with $ \length^{\requestInd}_{wmax} = \max_{\workingInd \in \workingSet^\requestInd}\left\{ \length^{\requestInd}_{\workingInd} \right\}$ and  $ \length^{\requestInd}_{pmax} = \max_{\protectionInd \in \protectionSet^\requestInd}\left\{ \length^{\requestInd}_{\protectionInd} \right\}$. 
%\end{proposition}
%
}
% \begin{proposition}[IP objective weight selection]
% \label{prop:obj_weights}
% Selecting $\alpha, \beta > 0$ such that 
% \begin{align}
%     \frac{\beta}{\alpha} \ &> \ |\requestSet| \ (\maxLPpairLength - 2) \ + \ 2 \label{eq:obj_coef_ratio_suff_lb}
% \end{align}
% prioritizes request granting over link usage in \eqref{eq:OF} for any feasible solution to the IP, i.e., solutions accepting more requests yield lower objective values, where ${\maxLPpairLength = \max\limits_{\requestInd \in \requestSet}\left\{ \max\limits_{\workingInd \in \workingSet^\requestInd} \big\{ \length^{\requestInd}_{\workingInd} \big\} + \max\limits_{\protectionInd \in \protectionSet^\requestInd}\big\{ \length^{\requestInd}_{\protectionInd} \big\} \right\}}$.
% \end{proposition}
%
%
\begin{proof}
Suppose that the parameters $\alpha, \beta > 0$ satisfy condition \eqref{eq:obj_coef_ratio_suff_lb}. 
\begin{comment}
We want to show the following logical relationship: 
\begin{align*}
    \objValGranted \ = \ \objValaltGranted + 1 \ \implies \ \objVal \ < \ \objValalt %\label{impl:wts_prior_rhs}
\end{align*}
\end{comment}
Let $(\xsol, \ysol)$ and $(\xsolalt, \ysolalt)$ be two feasible solutions 
%feasible to the IP 
that satisfy $\objValGranted > \objValaltGranted$.
By Proposition \ref{prop:beta_model_onediff}, it suffices to show that the prioritization condition in \eqref{eq:prioritization} holds when $\objValGranted = \objValaltGranted + 1$, i.e., we want to show the following logical relationship:
\begin{align}
    \objValGranted \ = \ \objValaltGranted + 1 \ \implies \ \alpha \objValLink^{\text{max}} - \beta \objValGranted  \ < \ \alpha \objValaltLink^{\text{min}} - \beta \objValaltGranted  \label{eq:wts_prior}
\end{align}

Assume for a contradiction that we have $\objValGranted = \objValaltGranted + 1$ and $ \alpha \objValLink^{\text{max}} - \beta \objValGranted  \ \geq \ \alpha \objValaltLink^{\text{min}} - \beta \objValaltGranted$, which can be equivalently written as
\begin{align}
\beta \objValGranted \ - \ \beta \objValaltGranted \ \leq \ \alpha \objValLink^{\text{max}} \ - \ \alpha \objValaltLink^{\text{min}}. \nonumber
\intertext{Rearranging the terms, we obtain}
\frac{\beta}{\alpha} 
\ \leq \ 
\frac{\objValLink^{\text{max}} \ - \ \objValaltLink^{\text{min}}}{\objValGranted \ - \ \objValaltGranted} 
\ = \
\objValLink^{\text{max}} \ - \ \objValaltLink^{\text{min}}, \label{eq:obj_coef_ratio_ub}
\end{align}
because $\alpha > 0$ and $\objValGranted - \objValaltGranted = 1$. Since each lightpath is comprised of at least one link, we have 
\begin{align}
    \objValaltLink^{\text{min}} \ \geq \ 2 \objValaltGranted. \label{eq:fmin_lb}
\end{align}
Moreover, in the worst case, the longest working and protection lightpaths will be used for each granted request, yielding
\begin{align}
    \objValLink^{\text{max}} 
    \ \leq \
    \objValGranted \cdot \max\limits_{\requestInd \in \requestSet}\left\{ \max\limits_{\workingInd \in \workingSet^\requestInd} \big\{ \length^{\requestInd}_{\workingInd} \big\} \ + \ \max\limits_{\protectionInd \in \protectionSet^\requestInd}\big\{ \length^{\requestInd}_{\protectionInd} \big\} \right\}
    \ \eqqcolon \ 
    \objValaltGranted \ \maxLPpairLength. \label{eq:fmax_ub}
\end{align}
Combining  \eqref{eq:obj_coef_ratio_ub}, \eqref{eq:fmin_lb} and \eqref{eq:fmax_ub}, we obtain
\begin{align*}
\frac{\beta}{\alpha} 
\ &\leq \ 
\objValLink^{\text{max}} \ - \ \objValaltLink^{\text{min}} 
\ \leq \   
\objValGranted \ \maxLPpairLength \ - \  2 \objValaltGranted. 
\intertext{Plugging in $\objValGranted - 1  = \objValaltGranted$, we get}
\frac{\beta}{\alpha} 
\ &\leq \ 
\objValGranted \ \maxLPpairLength \ - \  2 (\objValGranted - 1)
\ = \
\objValGranted ( \maxLPpairLength -  2) \ + \ 2 
\end{align*}
Since $\objValGranted \leq |\requestSet|$, we get
\begin{align*}
\frac{\beta}{\alpha} 
\ &\leq \ 
|\requestSet| \ ( \maxLPpairLength -  2) \ + \ 2 ,
\end{align*}
which contradicts with our premise in \eqref{eq:obj_coef_ratio_suff_lb}, and thus proves that for $\objValGranted = \objValaltGranted + 1$, satisfying the suggested condition in \eqref{eq:obj_coef_ratio_suff_lb}  guarantees that the prioritization condition in \eqref{eq:prioritization} holds, and thus concludes the proof. \hfill %\Halmos
\end{proof}

\subsection{Proof of Proposition \ref{prop:obj_weights_tightLB}}
\label{OS:proof:prop_obj_weights_tightLB}
{\it }
% \begin{proposition}[Tight example for the weight selection]
% \label{prop:obj_weights_tightLB}
% There exist {\PRWA} instances for which the lower bound provided in Proposition \ref{prop:obj_weights} is %tight 
% necessary to prioritize request granting over link usage for binary solutions.
% \end{proposition}
% %
%
\begin{proof}
%the following input must remain local
%\input{IP_subfiles/macros_obj_weights}
\newcommand{\len}{\ell}

Consider an {\PRWA} network $G$ being comprised of a pair of source and destination nodes $s$ and $t$ that are linked through two link-disjoint paths of length $\len_a$ and $\len_b$. 
Suppose that there is only one request, i.e., $|\requestSet| = 1$, which is between nodes $s$ and $t$, and the two distinct paths of length $\len_a$ and $\len_b$ together with some wavelength $\wavelength$ constitute the working and protection lightpaths for it, respectively.

Assume that $\alpha, \beta > 0$ are selected in such a way that %request granting is  prioritized over link usage. That is, 
for any pair of feasible solutions $(\xsol, \ysol)$ and $(\xsolalt, \ysolalt)$ with $\objValGranted > \objValaltGranted$, the prioritization condition in \eqref{eq:prioritization} holds.
Under this assumption, we want to show that the selected $\alpha, \beta > 0$ must satisfy the lower bound suggested in Proposition \ref{prop:obj_weights}, which for the above-mentioned class of instances becomes
\begin{align}
\frac{\beta}{\alpha} 
\ > \ 
|\requestSet| \ (\maxLPpairLength - 2) \ + \ 2
\ = \ \len_a \ + \ \len_b. \label{eq:obj_coef_ratio_lb_tight_ex}
\end{align}
%
%We assume for a contradiction that $\tfrac{\beta}{\alpha} \leq \len_a + \len_b$, and consider two solutions that satisfy

Let $(\xsol, \ysol) = (1,1)$ and $(\xsolalt, \ysolalt) = (0,0)$, giving $\objValGranted = 1$ and $\objValaltGranted = 0$. Then, by \eqref{eq:prioritization_v0}, we have
$$\alpha \ (\len_a + \len_b) \ - \ \beta \cdot 1 \ < \ \alpha \cdot 0 \ - \ \beta \cdot 0,$$ 
giving the desired necessary condition of $\frac{\beta}{\alpha}  \ > \ \len_a + \len_b$. 
\hfill %\Halmos
\end{proof}

\subsection{Proof of Lemma \ref{lem:np}}
\label{OS:proof:lem_np}
{\it }
% \begin{lemma}[Verifiability]
% \label{lem:np}
% {\DPRWAreq} is in NP.
% \end{lemma}
%
\begin{proof}
\newcommand{\PRWAreqSol}{\mathcal{S}}
\newcommand{\numGranted}{k}

Given an {\PRWAreq} instance  $\PRWAreqInst =  \left(G, \requestSet, \{\workingSet^\requestInd\}_{\requestInd \in \requestSet}, \{ \protectionSet^\requestInd \}_{r \in \requestSet}  \right) $, where $G = (V, \linkSet) $ is a directed graph representing the optical network, $\requestSet$ is the set of requests defined between pairs of source and destination nodes, and $\workingSet^\requestInd$ and $\protectionSet^\requestInd$ are respectively the set of  working and protection lightpaths for request $\requestInd \in \requestSet$, and a solution $\PRWAreqSol$ to instance $\PRWAreqInst$, we will show that we can verify in time polynomial in the size of $\PRWAreqInst$ whether or not $\PRWAreqSol$ properly grants $\numGranted$ requests for some given integer $\numGranted \geq 0$.
%some introduction for the following list goes here
In what follows, we provide the complexity of each step in checking if solution $\PRWAreqSol$ satisfies the constraints of the {\PRWAreq} problem and whether the number of granted requests is at least $\numGranted$. 
\begin{enumerate}
    \item For each request $\requestInd \in \requestSet$, we check if it is assigned (i) both a working and a protection lightpath or (ii) no lightpaths at all, which can be done by going over all working and protection lightpaths each time, which amounts to $ \bigO \left( \sum_{\requestInd \in \requestSet} \left(|\workingSet^\requestInd| + |\protectionSet^\requestInd| \right) \right)$ time in total. 
    If the number of granted requests is strictly less than $\numGranted$, the verification procedure terminates by concluding that solution $\PRWAreqSol$ does not grant $\numGranted$ requests. Otherwise, we continue verifying solution $\PRWAreqSol$ with the following steps.
    \item  For each granted request, we check if the assigned working and protection lightpaths are link-disjoint by going over all tuples in conflict set $\conflictSet_1$.
    In the worst case, all possible pairs of working and protection lightpaths will be contained in $\conflictSet_1$ for each request $\requestInd \in \requestSet$, which makes the time complexity of this step $\bigO \left( \sum_{\requestInd \in \requestSet} |\workingSet^{\requestInd}| \cdot |\protectionSet^{\requestInd}| \right) $.
    \item For the lightpaths used in solution $\PRWAreqSol$, we check if each pair having the same wavelength is link-disjoint. To this end, we go through the tuples in conflict sets $\conflictSet_2$, $\conflictSet_3$ and $\conflictSet_4$. In the worst case, all possible tuples will be contained in these sets, which yields the following complexities: 
    \BI
        \I If the pair is comprised of a working and protection lightpath, we search through the tuples in $\conflictSet_2$, amounting to $\bigO \left( \sum_{\requestInd_1 \in \requestSet} \sum_{\requestInd_2 \in \requestSet} (|\workingSet^{\requestInd_1}| \cdot |\protectionSet^{\requestInd_2}| \right) $ time in total.
        \I If both lightpaths in the pair are working ones, we go through the tuples in $\conflictSet_3$, which takes $\bigO \left( \sum_{\requestInd_1 \in \requestSet} \sum_{\requestInd_2 \in \requestSet} (|\workingSet^{\requestInd_1}| \cdot |\workingSet^{\requestInd_2}| \right) $ time in total.
        \I If both in the pair are protection lightpaths, we search through the tuples in $\conflictSet_4$, taking a total of $\bigO \left( \sum_{\requestInd_1 \in \requestSet} \sum_{\requestInd_2 \in \requestSet} (|\protectionSet^{\requestInd_1}| \cdot |\protectionSet^{\requestInd_2}| \right) $ time.
    \EI
\end{enumerate}
The time it takes to perform each one of the three steps listed above is polynomial in the size of the instance $\PRWAreqInst$.
Summing up the complexities of individual steps, we conclude that the overall complexity of verifying a given solution is polynomial in the size of $\PRWAreqInst$ by the composition property of polynomials. 
Hence, {\DPRWAreq} is in NP.
\hfill % \Halmos
\end{proof}

\begin{comment}
{\cpink
SHOWING THAT {\DPRWAreq} in NP:
Given a solution with $k$ granted requests, can we check its correctness in poly time?
We have $k_w$ working and $k_p$ protection lightpaths given, 
and we need to make sure that 

%0. Check if  $k_w = k_p = k$: constant time
1. For each $r \in R$, check if both a working and a protection path is assigned or none assigned: $O(\sum_{r \in R}(|W^r|+|P^r|))$ time

2. For each request, the associated $w$ and $p$ are link-disjoint: check all tuples in $\conflictSet_1$ in $O(\sum_{r \in R} |W^{r}| \cdot |P^{r}| ) $  

3. Each pair of selected lightpaths with the same wavelength should be link-disjoint: check all tuples in $\conflictSet_2, \conflictSet_3, \conflictSet_4$

$\conflictSet_2$: $O(\sum_{r_1 \in R} \sum_{r_2 \in R} (|W^{r_1}| \cdot |P^{r_2}|) $

$\conflictSet_3$: $O(\sum_{r_1 \in R} \sum_{r_2 \in R} (|W^{r_1}| \cdot |W^{r_2}|) $

$\conflictSet_4$: $O(\sum_{r_1 \in R} \sum_{r_2 \in R} (|P^{r_1}| \cdot |P^{r_2}|) $
}
\end{comment}

\subsection{Proof of Theorem \ref{thm:npcomplete}}
\label{OS:proof:thm_npcomplete}
{\it }
% \newcommand{\mssnode}{n}
% \newcommand{\wnode}{u}
% \newcommand{\pnode}{v}
% \newcommand{\uNodeOfEdge}{\mathcal{u}}
% \newcommand{\vNodeOfEdge}{\mathcal{v}}
% \newcommand{\exMSSinst}{G_s}
% \newcommand{\exPRWAnetwork}{G}
% \newcommand{\exConfSet}{\conflictSet}

% \begin{theorem}[Complexity]
% \label{thm:npcomplete}
% 	{\PRWAreq} is NP-hard.
% \end{theorem}
%
%
\begin{proof}
    We prove the hardness of {\PRWAreq} through a reduction from {\mss}.
    First, we describe a polynomial time reduction from a given {\mss} instance $G_s = \left( V_s, E_s \right)$ into an {\PRWAreq} instance $\PRWAreqInst =  \left(G, \requestSet, \{\workingSet^\requestInd\}_{r \in \requestSet}, \{ \protectionSet^\requestInd \}_{r \in \requestSet}  \right) $. 
    %where $G = (V, E) $ is a graph representing the optical network, $\requestSet$ is the set of requests defined between pairs of source and destination nodes, while $\workingSet^\requestInd$ and $\protectionSet^\requestInd$ are respectively the set of  working and protection lightpaths for request $r \in \requestSet$.
    Afterwards, we show that solving {\dmss} in $G_s$ is equivalent to solving {\DPRWAreq} in $\PRWAreqInst$; that is, finding a stable set of size at least $k$ in $G_s$ is equivalent to granting at least $k$ requests in $\PRWAreqInst$, for some given integer $1 \leq k \leq |V_s|$.

    {\it Construction.} Given an {\mss} instance $G_s = \left( V_s, E_s \right)$, we begin with defining the request set $\requestSet$ and an initial construction of $G$ along with initial working and protection lightpath sets $\workingSet^\requestInd$ and $\protectionSet^\requestInd$ for each $\requestInd \in \requestSet$.
    Then, we form the conflict sets $\conflictSet_1, \conflictSet_2, \conflictSet_3, \conflictSet_4$ using the edge set $E_s$ of the {\mss} instance, and modify $G$ and lightpath sets into their final form in accordance with the conflict sets. More specifically, we first create all link-disjoint lightpaths, and then modify them along with the network step by step, where in each step we create one distinct conflict from the conflict sets by modifying the involved lightpaths to have a common newly created link. 
Our reduction yields a simple directed graph $G$ (i.e., we do not introduce multiple links between the same ordered pair of nodes although it is allowed), where there exist a unique working and protection lightpath for each request, the lightpaths pairwise have at most one link in common, and all of them have the same wavelength.
    %and lightpaths having the same wavelength, pairwise sharing at most one distinct link, and each uniquely belonging to only one working or protection set of a request. 
    
    We divide our reduction procedure into four steps as explained in detail below.

    \begin{enumerate}
		\item {\it Requests.} We create $|V_s|$ requests, i.e., we set $\requestSet = \{ 1,\hdots, |V_s| \} $. Specifically, denoting the nodes in $V_s$ by $\mssnode_\requestInd , \requestInd \in \requestSet $, for each {\mss} node $\mssnode_\requestInd $, we create a pair of source and destination nodes $(s^\requestInd, t^\requestInd)$ for the optical network $G$ and request $\requestInd$ associated with it.
	
		%Namely,
		%\begin{align*}
		%\requestSet &= \left \{(s^\requestInd, t^\requestInd) \colon  r = 1,\hdots, |V_s| \right \}.
        %\requestSet &= \left \{\requestInd \colon \requestInd = (s^\requestInd, t^\requestInd), \  \mssnode_\requestInd \in V_s \right \}.
        %\end{align*}
        
        \smallskip
        
        \item {\it Initial network and lightpath sets.} For each request $\requestInd \in \requestSet$, we create two nodes $\wnode^\requestInd_1$ and $\pnode^\requestInd_1$, and generate two lightpaths between $s^\requestInd$ and $t^\requestInd$, namely a working lightpath $w \in \workingSet^\requestInd$ with 
        $$\linkSet[\workingInd] = \left\{ (s^\requestInd, \wnode^\requestInd_1), (\wnode^\requestInd_1, t^\requestInd) \right\} \ \text{and} \ \wavelengthOfLightpath(\workingInd) = \wavelength$$
        and a protection lightpath $p \in \protectionSet^\requestInd$ with
        $$\linkSet[\protectionInd] = \left\{ (s^\requestInd, \pnode^r_1), (\pnode^r_1, t^\requestInd) \right\} \ \text{and} \  \wavelengthOfLightpath(\protectionInd) = \wavelength,$$
        
        \noindent where $\linkSet[\ell]$ and $\wavelengthSet(\ell)$ respectively represent the set of links a given lightpath $\ell$ contains and the wavelength associated with it, as mentioned in Section \ref{sec:IP-formulation}.

        In words, the working and protection lightpaths for each request $\requestInd \in \requestSet$ and further all the lightpaths are initially link-disjoint, contain two links, and possess the same (arbitrary) wavelength $\wavelength$ (e.g., $\wavelength = 1$).
        
		Figure \ref{fig:nphard_mss_to_init_paths} illustrates an example {\mss} instance and the way a corresponding {\PRWAreq} network looks after performing steps 1 and 2 explained above, i.e., after constructing the request set $\requestSet$ and the initial working and protection lightpath sets $\workingSet^\requestInd$ and $\protectionSet^\requestInd$ for each $\requestInd \in \requestSet$. 
		The nodes of the {\mss} instance $\exMSSinst$ in part (i) of Figure \ref{fig:nphard_mss_to_init_paths} are highlighted with different colors, and the same set of colors are used to indicate the source and destination nodes of the corresponding requests in the {\PRWAreq} network $\exPRWAnetwork$ in part (ii). The paths through nodes with $\wnode$ labels represent those of the working lightpaths, whereas the ones through $\pnode$ labels show those of the protection lightpaths.

        \begin{figure}[!h]
\centering
\tikzset{main_node/.style={circle,draw,line width=0.75pt,minimum size=0.75cm,inner sep=0pt},}
\tikzset{node1/.style={circle,draw,line width=0.75pt,minimum size=0.75cm,inner sep=0pt,fill = red},}
\tikzset{node2/.style={circle,draw,line width=0.75pt,minimum size=0.75cm,inner sep=0pt,fill = yellow},}
\tikzset{node3/.style={circle,draw,line width=0.75pt,minimum size=0.75cm,inner sep=0pt,fill = blue!60!white},}
\tikzstyle{edge} = [draw,line width=1pt,-]
\tikzstyle{arclr} = [draw,line width=1pt,-latex]
\tikzstyle{arcrl} = [draw,line width=1pt,latex-]
\tikzstyle{highlighted edge} = [draw,line width=6pt,-,green!70!black, opacity=0.3]
\scalebox{0.65}{
\begin{tikzpicture}

    \node[node1,minimum size=0.95cm] (u1) {\large $\mssnode_1$};
    \node[node2,minimum size=0.95cm] (u2) [below left = 0.5 cm and 0.5 cm of u1] {\large $\mssnode_2$};
    \node[node3,minimum size=0.95cm] (u3) [below right = 0.5 cm and 0.5 cm of u1]  {\large $\mssnode_3$};
    
    \path[edge]
         (u1) to (u2)
         (u1) to (u3);
    
    \begin{scope}[xshift=9cm, yshift=1cm]
        \node[node1] (s_1) {$s^1$};
        \node[main_node] (w_11) [above right = 0.25 cm and 1.25 cm of s_1] {$\wnode^1_1$};
        \node[main_node] (p_11) [below right = 0.25 cm and 1.25 cm of s_1]  {$\pnode^1_1$};
        \node[node1] (t_1) [below right = 0.25 cm and 1.25 cm of w_11]  {$t^1$};     
    \end{scope}
    
     \draw[arclr] (s_1) edge[bend left=20]  (w_11);
     \draw[arclr] (w_11) edge[bend left=20] (t_1);
     \draw[arclr] (s_1) edge[bend right=20]  (p_11);
     \draw[arclr] (p_11) edge[bend right=20] (t_1);
    
    \begin{scope}[xshift=6cm, yshift=-2cm]
        \node[node2] (s_2) {$s^2$};
        \node[main_node] (w_21) [above right = 0.25 cm and 1.25 cm of s_2] {$\wnode^2_1$};
        \node[main_node] (p_21) [below right = 0.25 cm and 1.25 cm of s_2]  {$\pnode^2_1$};
        \node[node2] (t_2) [below right = 0.25 cm and 1.25 cm of w_21]  {$t^2$};
             
        \draw[arclr] (s_2) edge[bend left=20]  (w_21);
     	\draw[arclr] (w_21) edge[bend left=20] (t_2);
     	\draw[arclr] (s_2) edge[bend right=20]  (p_21);
     	\draw[arclr] (p_21) edge[bend right=20] (t_2);
             
    \end{scope}
    
    \begin{scope}[xshift=12cm, yshift=-2cm]
        \node[node3] (s_3) {$s^3$};
        \node[main_node] (w_31) [above right = 0.25 cm and 1.25 cm of s_3] {$\wnode^3_1$};
        \node[main_node] (p_31) [below right = 0.25 cm and 1.25 cm of s_3]  {$\pnode^3_1$};
        \node[node3] (t_3) [below right = 0.25 cm and 1.25 cm of w_31]  {$t^3$};
        
          \draw[arclr] (s_3) edge[bend left=20]  (w_31);
          \draw[arclr] (w_31) edge[bend left=20] (t_3);
          \draw[arclr] (s_3) edge[bend right=20]  (p_31);
          \draw[arclr] (p_31) edge[bend right=20] (t_3);
    \end{scope}
       
    \node (info_node1) [below = 3.5 cm of u1] {(i) };
    \node (info_node2) [below = 3.75 cm of p_11] {(ii)};

\end{tikzpicture}
}
\caption{(i) An {\mss} instance $\exMSSinst$, (ii) the initial incomplete form of the corresponding {\PRWAreq} network $\exPRWAnetwork$. }
\label{fig:nphard_mss_to_init_paths}
\end{figure}	
		
		\item {\it Conflict sets.} Next, we define the conflict sets $\conflictSet_1, \ldots, \conflictSet_4$, which we will utilize to modify the initial form of the network $G$. 
		Specifically, for each pair of requests corresponding to an edge of $G_s$, we introduce different conflict tuples for $\conflictSet_i$'s, as detailed below:
        \medskip
        
		    (i) \ We define the working and protection lightpaths of any fixed request $\requestInd \in \requestSet$ to be link-disjoint. So, we have  
		    \begin{align*}
	        \conflictSet_1 \ &= \ \varnothing. 
		    \intertext{(ii) \ For each edge of $G_s$, we want the working and protection lightpaths of the associated requests to have a link in common. 
		    In particular, for each edge $ \{\mssnode_{\requestInd_1}, \mssnode_{\requestInd_2}\} \in E_s $, we want the lightpath pairs $(a)$ $ \workingInd \in \workingSet^{\requestInd_1}$, $\protectionInd \in \protectionSet^{\requestInd_2} $, and $(b)$ $ \workingInd \in \workingSet^{\requestInd_2}$, $\protectionInd \in \protectionSet^{\requestInd_1} $ to share a link.
		    As such, we define }
		    \conflictSet_2 \ &= \ \conflictSet_2^{a} \cup \conflictSet_2^{b}, 
		    \intertext{where}
		    \conflictSet_2^{a} &= \left\{ (\requestInd_1, \requestInd_2, \workingInd, \protectionInd) \colon \requestInd_1, \requestInd_2 \in \requestSet, \ \{\mssnode_{\requestInd_1}, \mssnode_{\requestInd_2}\} \in E_s, \ \workingInd \in \workingSet^{\requestInd_1}, \  \protectionInd \in \protectionSet^{\requestInd_2} \right\} \\
		    \conflictSet_2^{b} &= \left\{ (\requestInd_2, \requestInd_1, \workingInd, \protectionInd) \colon \requestInd_1, \requestInd_2 \in \requestSet, \ \{\mssnode_{\requestInd_1}, \mssnode_{\requestInd_2}\} \in E_s, \ \workingInd \in \workingSet^{\requestInd_2}, \  \protectionInd \in \protectionSet^{\requestInd_1} \right\}.    
            \intertext{(iii) \ For each edge $ \{\mssnode_{\requestInd_1}, \mssnode_{\requestInd_2}\} \in E_s $, we want the pair of working lightpaths for the associated requests $\requestInd_1$ and $\requestInd_2$ have a link in common. Accordingly, we have }
		    \conflictSet_3 \ &= \ \left\{ (\requestInd_1, \requestInd_2, \workingInd_1, \workingInd_2) \colon \requestInd_1, \requestInd_2 \in \requestSet, \ \{\mssnode_{\requestInd_1}, \mssnode_{\requestInd_2}\} \in E_s, \ \workingInd_1 \in \workingSet^{\requestInd_1}, \  \workingInd_2 \in \workingSet^{\requestInd_2} \right\}. 
		    \intertext{(iv) \ Finally, for each edge $ \{\mssnode_{\requestInd_1}, \mssnode_{\requestInd_2}\} \in E_s $, we want the pair of protection lightpaths for the associated requests $\requestInd_1$ and $\requestInd_2$ to share a link. So, we have}
		    \conflictSet_4 \ &= \ \left\{ (\requestInd_1, \requestInd_2, \protectionInd_1, \protectionInd_2) \colon \requestInd_1, \requestInd_2 \in \requestSet, \ \{\mssnode_{\requestInd_1}, \mssnode_{\requestInd_2}\} \in E_s, \ \protectionInd_1 \in \protectionSet^{\requestInd_1}, \  \protectionInd_2 \in \protectionSet^{\requestInd_2} \right\}. 
		\end{align*}
        
        \medskip
        We note that the total number of tuples in the conflict sets is $\bigO \left( |E_s| \right)$, because we generate a constant number of tuples for each edge in $E_s$. 
        
        In the example provided in Figure \ref{fig:nphard_mss_to_init_paths}, let $\workingInd^{\requestInd} \in \workingSet^{\requestInd}$ and $\protectionInd^{\requestInd} \in \protectionSet^{\requestInd}$ denote the working and protection lightpaths for request $\requestInd \in \{1,2,3\}$. Then, the conflict sets $\exConfSet_i$'s based on the {\mss} instance $\exMSSinst$ in Figure \ref{fig:nphard_mss_to_init_paths} are as follows:
        \begin{align*}
            \exConfSet_1 \ &= \ \varnothing \\
            \exConfSet_2 \ &= \ \left\{ \left(1,2,\workingInd^1,\protectionInd^2 \right), \ \left(2,1,\workingInd^2,\protectionInd^1 \right), \ \left(2,3,\workingInd^2,\protectionInd^3 \right), \ \left(3,2,\workingInd^3,\protectionInd^2 \right) \right\}\\
            \exConfSet_3 \ &= \ \left\{ \left(1,2,\workingInd^1,\workingInd^2 \right), \ \left(2,3,\workingInd^2,\workingInd^3 \right) \right\}\\
            \exConfSet_4 \ &= \ \left\{ \left(1,2,\protectionInd^1,\protectionInd^2 \right), \ \left(2,3,\protectionInd^2,\protectionInd^3 \right) \right\}.
        \end{align*}

		\item {\it Network and lightpath set modification.} The initial form of the {\PRWAreq} network $G$ consists of the disjoint union of cycles (illustrated in part (ii) of Figure \ref{fig:nphard_mss_to_init_paths}), and hence does not yet incorporate the common links of lightpaths expressed in conflict sets $\conflictSet_1, \ldots, \conflictSet_4$.
		In this last step of the construction of the {\PRWAreq} instance $\PRWAreqInst$, we modify $G$ to make it consistent with the conflict sets specified above.
		To this end, we iterate over the tuples in the conflict sets, and modify and re-route the associated lightpaths so that they share a link.
		In alignment with the above-defined conflict sets, we keep the working and protection lightpaths of any fixed request link-disjoint, and modify each lightpath pairs appearing in the conflict sets such that they have one link in common.
		For ease of exposition, we use our running example in explaining the procedure.
		Let us consider the conflict tuple $\left(1,2,\workingInd^1,\workingInd^2 \right)$, where $\left((s^1, \wnode^1_1), (\wnode^1_1, t^1) \right)$ and $\left((s^2, \wnode^2_1), (\wnode^2_1, t^2) \right)$ are the ordered set of links constituting the working lightpaths $\workingInd^1$ and $\workingInd^2$, respectively.
		We arbitrarily choose the lightpath $\workingInd^1$ to be the ``host" to intersect $\workingInd^1$ and $\workingInd^2$ on, i.e., to create a shared link that currently belongs to the host.
		Starting with the reconstruction of the lightpath $\workingInd^1$, we delete the link $(\wnode^1_1, t^1)$, create a node labelled $\wnode^1_2$, and add links $(\wnode^1_1, \wnode^1_2)$ and $(\wnode^1_2, t^1)$, so that $\left((s^1, \wnode^1_1), (\wnode^1_1, \wnode^1_2), (\wnode^1_2, t^1) \right)$ becomes the revised working lightpath for request $1$.
		Then, for lightpath $\workingInd^2$, we remove the link $(\wnode^2_1, t^2)$, create a node labelled $\wnode^2_2$, and add links $(\wnode^2_1, \wnode^1_1)$, $(\wnode^1_2, \wnode^2_2)$ and $(\wnode^2_2, t^2)$, as a result of which the working lightpath for request $2$ becomes $\left((s^2, \wnode^2_1), (\wnode^2_1, \wnode^1_1), (\wnode^1_1, \wnode^1_2), (\wnode^2_2, t^2) \right)$, as highlighted in part (ii) of Figure \ref{fig:nphard_path_constr}, with $(\wnode^1_1, \wnode^1_2)$ now being the common link of the modified lightpaths $\workingInd^1$ and $\workingInd^2$.
		As an additional illustration of our lightpath reconstruction procedure, we next consider the conflict tuple $\left(1,2,\workingInd^1,\protectionInd^2 \right)$ for our running example, arbitrarily select the host to be the lightpath $\workingInd^1$, and then perform a similar set of operations to re-route the lightpaths, the result of which is shown in part (iii) of Figure \ref{fig:nphard_path_constr}. 
		So, for each conflict tuple under consideration, we select one of the lightpaths to be the host, extend it by one link, and redirect the other lightpath through the newly created link.
        We note that this procedure establishes each lightpath intersection through newly created nodes and links in $G$, and thereby keeps the remaining lightpaths intact at every step.

        \begin{figure}[!h]
    \centering
     \tikzset{main_node/.style={circle,draw,line width=0.75pt,minimum size=0.75cm,inner sep=0pt},}
    \tikzset{node1/.style={circle,draw,line width=0.75pt,minimum size=0.75cm,inner sep=0pt,fill = red},}
    \tikzset{node2/.style={circle,draw,line width=0.75pt,minimum size=0.75cm,inner sep=0pt,fill = yellow},}
    \tikzset{node3/.style={circle,draw,line width=0.75pt,minimum size=0.75cm,inner sep=0pt,fill = blue!60!white},}
    \tikzstyle{edge} = [draw,line width=1pt,-]
    \tikzstyle{arclr} = [draw,line width=1pt,-latex]
    \tikzstyle{arcrl} = [draw,line width=1pt,latex-]
    \tikzstyle{highlighted edge} = [draw,line width=6pt,-,green!70!black, opacity=0.3]
    
    \scalebox{0.59}{
    \begin{tikzpicture}
    
        \node[node1] (s_1) {$s^1$};
        \node[main_node] (w_11) [above right = 0.25 cm and 1.25 cm of s_1] {$\wnode^1_1$};
        \node[main_node] (p_11) [below right = 0.25 cm and 1.25 cm of s_1]  {$\pnode^1_1$};
        \node[node1] (t_1) [below right = 0.25 cm and 1.25 cm of w_11]  {$t^1$};
        
         \draw[arclr] (s_1) edge[bend left=20]  (w_11);
         \draw[arclr] (w_11) edge[bend left=20] (t_1);
         \draw[arclr] (s_1) edge[bend right=20]  (p_11);
         \draw[arclr] (p_11) edge[bend right=20] (t_1);
             
        \begin{scope}[yshift=-3.25cm]
            \node[node2] (s_2) {$s^c$};
            \node[main_node] (w_21) [above right = 0.25 cm and 1.25 cm of s_2] { $\wnode^2_1$};
            \node[main_node] (p_21) [below right = 0.25 cm and 1.25 cm of s_2]  {$\pnode^2_1$};
            \node[node2] (t_2) [below right = 0.25 cm and 1.25 cm of w_21]  {$t^2$};
            
            \node (info1) [below = 0.5 cm of p_21]  {(i)};

        \draw[arclr] (s_2) edge[bend left=20]  (w_21);
     	\draw[arclr] (w_21) edge[bend left=20] (t_2);
     	\draw[arclr] (s_2) edge[bend right=20]  (p_21);
     	\draw[arclr] (p_21) edge[bend right=20] (t_2); 
                 
        \end{scope}
        
        \begin{scope}[xshift=6.5cm]
            \node[node1] (s_1) {$s^1$};
            \node[main_node] (w_11) [above right = 0.25 cm and 1.25 cm of s_1] {$\wnode^1_1$};
            \node[main_node] (w_12) [right = 1.15 cm of w_11] {$\wnode^1_2$};
            \node[main_node] (p_11) [below right = 0.25 cm and 1.25 cm of s_1]  {$\pnode^1_1$};
            \node[node1] (t_1) [below right = 0.25 cm and 1.25 cm of w_12]  {$t^1$};
            
            \node (trans_node_1) [below left = 1 cm and 1.25 cm of s_1] {};
            \node (trans_node_2) [below left = 1 cm and 0.5 cm of s_1] {};
            \draw[ultra thick,->] (trans_node_1) to (trans_node_2);

         \draw[arclr] (s_1) edge[bend left=20]  (w_11);
         \draw[arclr] (w_11) edge[bend left=5] (w_12);
         \draw[arclr] (w_12) edge[bend left=20] (t_1);
         \draw[arclr] (s_1) edge[bend right=20]  (p_11);
         \draw[arclr] (p_11) edge[bend right=20] (t_1);

        \end{scope}
        
        \begin{scope}[xshift=6.5cm, yshift=-3.25cm]
            \node[node2] (s_2) {$s^2$};
            \node[main_node] (w_21) [above right = 0.25 cm and 1.25 cm of s_2] { $\wnode^2_1$};
            \node[main_node] (w_22) [right = 1.15 cm of w_21] { $\wnode^2_2$};
            \node[main_node] (p_21) [below right = 0.25 cm and 1.25 cm of s_2]  {$\pnode^2_1$};
            \node[node2] (t_2) [below right = 0.25 cm and 1.25 cm of w_22]  {$t^2$};

            \node (info1) [below right = 0.5 cm and 0.25 cm of p_21]  {(ii)};

                \draw[arclr] (s_2) edge[bend left=20]  (w_21);
                \draw[arclr] (w_21) edge[out=30,in=-80] (w_11);
                \draw[arcrl] (w_22) edge[out=150,in=-80] (w_12);
                \draw[arclr] (w_22) edge[bend left=20] (t_2);                    
                \draw[arclr] (s_2) edge[bend right=20]  (p_21);
                \draw[arclr] (p_21) edge[bend right=20] (t_2);
               
            \path[highlighted edge]
                (s_2)  to[bend left=20]  (w_21)
                (w_11) to[bend left=5] (w_12)
                (w_21) to[out=30,in=-80] (w_11)
                (w_12) to[out=-80,in=150] (w_22)
                (w_22) to[bend left=20] (t_2); 
            \path[edge]
                (w_11) edge[line width = 2.5pt, bend left=5] (w_12);
        \end{scope}
        
        \begin{scope}[xshift=14.5cm]
            \node[node1] (s_1) {$s^1$};
            \node[main_node] (w_11) [above right = 0.25 cm and 1.25 cm of s_1] {$\wnode^1_1$};
            \node[main_node] (w_12) [right = 1.15 cm of w_11] {$\wnode^1_2$};
            \node[main_node] (w_13) [right = 1.15 cm of w_12] {$\wnode^1_3$};
            \node[main_node] (p_11) [below right = 0.25 cm and 1.25 cm of s_1]  {$\pnode^1_1$};
            \node[node1] (t_1) [below right = 0.25 cm and 1.25 cm of w_13]  {$t^1$};
            
            \node (trans_node_1) [below left = 1 cm and 1.25 cm of s_1] {};
            \node (trans_node_2) [below left = 1 cm and 0.5 cm of s_1] {};
            \draw[ultra thick,->] (trans_node_1) to (trans_node_2);
   
                 \draw[arclr] (s_1) edge[bend left=20]  (w_11);
                 \draw[arclr] (w_11) edge[bend left=5] (w_12);
                 \draw[arclr] (w_12) edge[bend left=5] (w_13);
                 \draw[arclr] (w_13) edge[bend left=20] (t_1);
                 \draw[arclr] (s_1) edge[bend right=20]  (p_11);
                 \draw[arclr] (p_11) edge[bend right=15] (t_1);
                 
        \end{scope}
        
        \begin{scope}[xshift=14.5cm, yshift=-3.25cm]
        
            \node[node2] (s_2) {$s^2$};
            \node[main_node] (w_21) [above right = 0.25 cm and 1.25 cm of s_2] { $\wnode^2_1$};
            \node[main_node] (w_22) [right = 1.15 cm of w_21] { $\wnode^2_2$};
            \node[main_node] (p_21) [below right = 0.25 cm and 1.25 cm of s_2]  {$\pnode^2_1$};
            \node[main_node] (p_22) [right = 1.15 cm of p_21]  {$\pnode^2_2$};
            \node[node2] (t_2) [below right = 0.25 cm and 1.25 cm of w_22]  {$t^2$};
            
            \node (info1) [below = 0.5 cm of p_22]  {(iii)};

                \draw[arclr] (s_2) edge[bend left=20]  (w_21);
                \draw[arclr] (w_21) edge[out=30,in=-80] (w_11);
                \draw[arclr] (w_22) edge[bend left=20] (t_2);
                \draw[arclr] (w_12) edge[out=-80,in=150] (w_22);
                \draw[arclr] (s_2) edge[bend right=20]  (p_21);
                \draw[arclr] (p_21) edge[out=80,in=-50] (w_12);
                \draw[arclr] (w_13) edge[out=-70,in=80] (p_22);
                \draw[arclr] (p_22) edge[bend right=15] (t_2);
                
            \path[highlighted edge]
                (s_2) to[bend right=20]  (p_21)
                (p_21) to[out=80,in=-50] (w_12)
                (w_12) to[bend left=5] (w_13)
                (w_13) to[out=-70,in=80] (p_22)
                (p_22) to[bend right=15] (t_2);

            \path[edge]
                (w_12) edge[line width = 2.5pt, bend left=5] (w_13);
        \end{scope}
    \end{tikzpicture}
    }
    \caption{Illustration of the re-routing of lightpaths in the {\PRWAreq} network $\exPRWAnetwork$. }
    \label{fig:nphard_path_constr}
\end{figure}	

	\end{enumerate}
    
    {\it Reduction complexity.} Let us investigate the complexity of building the {\PRWAreq} instance $\PRWAreqInst =  \left(G, \requestSet, \{\workingSet^\requestInd\}_{r \in \requestSet}, \{ \protectionSet^\requestInd \}_{r \in \requestSet}  \right) $ from a given {\mss} instance $G_s = \left( V_s, E_s \right)$.
    Steps $1$ through $3$ respectively take $O\left(|V_s|\right)$, $O\left(|V_s|\right)$, and $ O\left(|E_s|\right) $ time.
    In step $4$, we delete two links, add three links and two nodes for each conflict tuple, each of which takes constant time. Since there are $O\left(|E_s|\right)$-many tuples, we spend $O\left(|E_s|\right)$ time in total for step $4$.
    Then, the complexity of instance reduction phase amounts to $O\left(|V_s| + |E_s| \right)$ in total, which is polynomial in the size of the {\mss} instance $G_s$. 
    Hence, we have a polynomial time reduction from {\mss} to {\PRWAreq}.

    {\it Problem complexity.} Now, we need to show that solving {\dmss} in $G_s$ is equivalent to solving {\DPRWAreq} in $\PRWAreqInst$. That is, we should show that there exists a stable set of size at least $k$ in $G_s$ if and only if we can satisfy at least $k$ requests in $\PRWAreqInst$.
    
    $(\Rightarrow)$ Suppose that we are given a stable set of size $k$ in $G_s$, say nodes $\{\mssnode_1, \ldots, \mssnode_k\}$ without loss of generality. 
    Take lightpaths $\{\workingInd^1, \protectionInd^1, \ldots, \workingInd^k, \protectionInd^k\}$ in $\PRWAreqInst$.
    If this selection of lightpaths satisfies the constraints of the {\PRWAreq} problem, then it means that we can grant $k$ requests in $\PRWAreqInst$.
    First, since there is only one working lightpath assigned to every request corresponding to nodes $\{\mssnode_1, \ldots, \mssnode_k\}$, the constraint of assigning at most one working lightpath to each request is satisfied.
    %, the constraints  \eqref{eq:w_p_at_most_one} are already satisfied.
    Second, since no pair of nodes in $\{\mssnode_1, \ldots, \mssnode_k\}$ are linked by an edge, the corresponding lightpaths $\{\workingInd^1, \protectionInd^1, \ldots, \workingInd^k, \protectionInd^k\}$ do not share a link either, 
    so the requirement that the working and protection lightpaths of each granted request being link-disjoint is satisfied as well. 
    %so the constraints  \eqref{eq:conflicts1}--\eqref{eq:conflicts4} are satisfied as well.
    %As $\conflictSet_1 = \varnothing$, there is no constraint of type \eqref{eq:conflicts1}.
    Finally, since we add both the working and protection lightpaths for each request $r$ corresponding to the nodes of the given stable set, each granted request is properly assigned the two types of lightpaths.
    %the constraints  \eqref{eq:w_p_equality} are never violated.
    Therefore, we conclude that we can grant $k$ requests in this case.

    $(\Leftarrow)$ Conversely, suppose that we have a solution granting $k$ requests in the {\DPRWAreq} instance $\PRWAreqInst$, say requests $\{1, \ldots, k\}$.
    Take the nodes in $G_s$ that correspond to requests $\{1, \ldots, k\}$.
    Since all the lightpaths use the same wavelength in $G$, the ones associated with requests $\{1, \ldots, k\}$ cannot share a link.
    This implies that the nodes $\{\mssnode_1, \ldots, \mssnode_k\}$ representing requests $\{1, \ldots, k\}$ cannot be connected (i.e., they form a stable set), because if they were, the related lightpaths would have been forced to share a link by our construction.
    
    This shows that solving {\dmss} on $G_s$ is equivalent to solving {\DPRWAreq} in $\PRWAreqInst$ built by a polynomial-time reduction from $G_s$.
    Since {\DPRWAreq} is in NP, as shown in Lemma \ref{lem:np}, we conclude that {\DPRWAreq} is NP-complete, which implies that {\PRWAreq} is NP-hard. \hfill %\Halmos
\end{proof}

\subsection{Proof of Proposition \ref{prop:penalty_coef}}
\label{OS:proof:prop_penalty_coef}
{\it % \begin{proposition}[QUBO penalty selection]
% \label{prop:penalty_coef}
When 
\begin{align}
\penaltyCoef 
\ > \
\beta  (|\requestSet| \ + \ 1) 
\ - \ 
\alpha \left( 1 \ + \  \sum_{\requestInd \in \requestSet} \left(\length^{\requestInd}_{w_{\min}}  + \ \length^{\requestInd}_{p_{\min}} \right) \right),  \tag{\ref{eq:penalty_coef_lb}}  
\end{align}
\eqref{m:QUBO} is an exact QUBO model for the {\PRWA} problem,
%; that is, any optimal solution of the QUBO model is feasible and optimal for the IP model.
where 
$\length^{\requestInd}_{w_{\min}} 
= \ 
\min_{\workingInd \in \workingSet^{\requestInd}} \left \{\length^\requestInd_\workingInd \right \} $
and 
$\length^{\requestInd}_{p_{\min}}
= \ 
\min_{\protectionInd \in \protectionSet^{\requestInd}} \left \{\length^\requestInd_\protectionInd \right \}$.
% \end{proposition}

}
% \begin{proposition}[QUBO penalty selection]
% \label{prop:penalty_coef}
% When 
% \begin{align}
% \penaltyCoef 
% \ > \
% \beta  (|\requestSet| \ + \ 1) 
% \ - \ 
% \alpha \left( 1 \ + \  \sum_{\requestInd \in \requestSet} \left(\length^{\requestInd}_{w_{\min}}  + \ \length^{\requestInd}_{p_{\min}} \right) \right),  \label{eq:penalty_coef_lb}  
% \end{align}
% the QUBO model in \eqref{m:QUBO} is an exact reformulation of the IP formulation in \eqref{m:IP} for the {\PRWA} problem; that is, any optimal solution of the QUBO model is feasible and optimal for the IP model. 
% \begin{comment}
% %previously suggested condition
% \begin{align}
% \penaltyCoef \ > \ \sum_{\requestInd \in \requestSet} \sum_{\workingInd \in \workingSet^{\requestInd}} \left( \beta - \alpha \length^\requestInd_\workingInd \right),  \label{eq:penalty_coef_lb}    
% \end{align}
% \end{comment}
% \end{proposition}
%
\begin{proof}

Given the vector of working and protection lightpath binary decision variables $x$ and $y$, respectively, we introduce $g$ function to compactly rewrite the QUBO objective function expression in \eqref{eq:OF_qubo} as 
$$\obj(x,y) \ + \ \penaltyCoef \ g(x,y).$$ 
That is, $\obj(x,y)$ is the objective expression of the IP as defined in \eqref{eq:OF} ({\IPbase}), while $g(x,y)$ denotes the summation of the penalty terms in the QUBO objective before the common penalty coefficient of $\penaltyCoef$ is applied. 
{\cemph Even though the feasible spaces of {\IPbase} and {\IPstrong} are indeed the same, the IP model we actually refer to in this proof is {\IPbase} because our QUBO model is reformulation of that model.}
Note that $g(x,y) = 0$ if $(x,y)$ is IP feasible, and $g(x,y) \geq 1$ otherwise by the construction of the penalty terms as explained before. 

We first derive valid upper and lower bounds on the $\obj$ function value over the set of IP feasible solutions, namely
\begin{align*}
    F_{UB} & \coloneqq \max \left\{ \obj(x,y) \colon x,y \ \text{binary and} \ g(x,y) = 0 \right\} \\
    F_{LB} & \coloneqq \min \left\{ \obj(x,y) \colon x,y \ \text{binary and} \ g(x,y) = 0 \right\}.
\end{align*}
It is easy to observe that $F_{UB} = 0$ since (i) the trivial solution of accepting none of the requests yields $\obj(x=\boldsymbol{0},y=\boldsymbol{0}) = 0$ for the maximization problem, thus $F_{UB} \geq 0$, and (ii) the selection of $ \beta$ and $ \alpha $ values as suggested by Proposition \ref{prop:obj_weights} ensures that every granted request adds a negative term to the $f$ value in the IP objective, hence yields $F_{UB} \leq 0$. 
On the other hand, the optimal value to the lower bound problem is
at least the objective value that is obtained
%attained at $(x=\boldsymbol{1},y=\boldsymbol{1})$, i.e., 
by granting all the requests using the shortest working and protection lightpaths for each request, 
%assigning each a working and protection lightpath,
yielding
\begin{align*}
F_{LB} 
\ \geq \ 
%\obj(x=\boldsymbol{1},y=\boldsymbol{1}) 
%\ &= \
\alpha \ \sum_{\requestInd \in \requestSet} \left( \min_{\workingInd \in \workingSet^{\requestInd}} \left \{\length^\requestInd_\workingInd \right \} 
\ + \ 
\min_{\protectionInd \in \protectionSet^{\requestInd}} \left \{\length^\requestInd_\protectionInd \right \}
 \right) 
\ - \ 
\beta \sum_{\requestInd \in \requestSet} 1 %\\[0.2cm]
& \ = \ 
\alpha \ \sum_{\requestInd \in \requestSet} \left(\length^{\requestInd}_{w_{\min}}  + \ \length^{\requestInd}_{p_{\min}} \right)
\ - \
\beta \ |\requestSet|.  
\end{align*}
This is again due to the way we set the values of $\alpha$ and $\beta$ by Proposition \ref{prop:obj_weights}.

Next, we observe that we can separate the range of the QUBO objective values of IP infeasible solutions from those of IP feasible solutions by pushing them beyond $F_{UB}$.
%, which can be achieved by adding a penalty term that is strictly larger than $F_{UB} - F_{LB}$.
As mentioned before, when a binary solution does not satisfy some IP constraints in \eqref{eq:w_p_equality}--\eqref{eq:conflicts4}, the magnitude of violation is at least one.
Due to the way $\alpha$ and $\beta$ values are set by Proposition \ref{prop:obj_weights}, one additional unit of violation can lead to a decrease of the $\obj$ value by at most $\beta - \alpha$, which happens when a zero value in the $x$ vector is changed to one and $y$ vector is kept intact, where the $-\alpha$ term is due to the resulting additional link usage being at least one. 
In order to guarantee that the QUBO objective values are shifted strictly above $F_{UB}$ through penalty terms in case at least one of the {\crev IP constraints in \eqref{eq:w_p_equality}--\eqref{eq:conflicts4}} are violated, we set
\begin{align*}
\penaltyCoef 
\ > \ 
\beta  (|\requestSet| \ + \ 1) - \alpha  \left( 1 \ + \  \sum_{\requestInd \in \requestSet} \left(\length^{\requestInd}_{w_{\min}}  + \ \length^{\requestInd}_{p_{\min}} \right) \right)
\ \geq \
F_{UB} - F_{LB} + \beta - \alpha.
\end{align*}
%so that the QUBO objective values are shifted strictly above $F_{UB}$ through penalty terms in case at least one of the original constraints {\crev in \eqref{eq:w_p_equality}--\eqref{eq:conflicts4}} is violated. 
By selecting the penalty coefficient $\penaltyCoef$ {\crev such that it satisfies the above strict inequality, which is indeed the suggested one in \eqref{eq:penalty_coef_lb}}, we obtain the following relations:
\begin{alignat*}{2}
    & (x,y) \ \text{is feasible to the IP} \ && \implies \ g(x,y) = 0 \\ & &&  \implies \ \obj(x,y) + \penaltyCoef \ g(x,y) \leq F_{UB} \\
    & (x,y) \ \text{is infeasible to the IP} \ &&  \implies g(x,y) \geq 1 \\ & &&  \implies \ \obj(x,y) + \penaltyCoef \ g(x,y) > \obj(x,y) + F_{UB} - F_{LB} + \beta - \alpha \geq F_{UB}. 
\end{alignat*}
As such, the IP feasible solutions will always deliver smaller QUBO objective values than IP infeasible ones. Since our QUBO model is in minimization form, any optimal solution to the QUBO model must be one that is optimal (and feasible) for the IP model. \hfill %\Halmos
\end{proof}

%\end{appendix}

\end{document}